\def\pf{\begin{proof}}
\def\C{{\mathbb C}}
\def\O{{\mathcal O}}
\newtheorem{thm}{Theorem}[section]
\newtheorem{prop}[thm]{Proposition}
\newtheorem{cor}[thm]{Corollary}
\newtheorem{lem}[thm]{Lemma}
\newtheorem{defn}[thm]{Definition}
\newtheorem{remark}[thm]{Remark}
\def\O{\Omega}
\def\la{\lambda}
\def \a{ \alpha}
\def\da{\delta}
\def\ga{\gamma}
\def\Ga{\Gamma}
\def\ba{\beta}
\newcommand{\bprop} {\begin{proposition}}
\newcommand{\eprop} {\end{proposition}}
\newcommand{\btheo} {\begin{theorem}}
\newcommand{\etheo} {\end{theorem}}
\newcommand{\blem} {\begin{lemma}}
\newcommand{\elem} {\end{lemma}}
\newcommand{\bcor} {\begin{corollary}}
\newcommand{\ecor} {\end{corollary}}
\newcommand{\Be}{\begin{equation}}
\newcommand{\Ee}{\end{equation}}
\newcommand{\Bea}{\begin{eqnarray}}
\newcommand{\Eea}{\end{eqnarray}}
\newcommand{\Bes}{\begin{equation*}}
\newcommand{\Ees}{\end{equation*}}
\newcommand{\Beas}{\begin{eqnarray*}}
\newcommand{\Eeas}{\end{eqnarray*}}
\newcommand{\Ba}{\begin{array}}
\newcommand{\Ea}{\end{array}}
\def\R{\mathbb{R}}
\def\C{\mathbb{C}}
\def\F{\mathcal F}
\begin{document}

\title[Lebesgue mixed norm estimates for Bergman Projectors]{Lebesgue mixed norm estimates for Bergman projectors:
from tube domains over homogeneous cones to Homogeneous Siegel Domains of Type II }

\author[D. B\'ekoll\'e]{David B\'ekoll\'e}
\address{Department of Mathematics, Faculty of Science, University of Ngaound\'er\'e\\ P.O. Box 454, Ngaound\'er\'e, Cameroon }
\email{{\tt dbekolle@univ-ndere.cm}}
\author[J. Gonessa]{Jocelyn Gonessa}
\address{Universit\'e de Bangui, Facult\'e des Sciences, D\'epartement de math\'ematiques et Informatique, BP. 908, Bangui, R\'epublique Centrafricaine}
\email{gonessa.jocelyn@gmail.com}
\author[C. Nana]{Cyrille Nana}
\address{Faculty of Science, Department of Mathematics, University of Buea, P.O. Box 63, Buea, Cameroon}
\email{{\tt nana.cyrille@ubuea.cm}}

\subjclass{} \keywords{Homogeneous cones - Homogeneous Siegel domains of type II - Bergman
spaces - Bergman projectors - Box operator.}
\begin{abstract}
We present a transference principle of Lebesgue mixed norm estimates for Bergman projectors from tube domains over homogeneous cones to homogeneous Siegel domains of type II associated to the same cones.  This principle implies improvements of these estimates for homogeneous Siegel domains of type II associated with Lorentz cones, e.g. the Pyateckii-Shapiro Siegel domain of type II.
\end{abstract}
\maketitle
\section{Introduction }
Let  $D$ be a domain in $\mathbb C^n$ and $dv$ the Lebesgue measure defined in $\mathbb C^n.$
We denote by $P$ the Bergman projector i.e., the orthogonal projector of the Hilbert space $L^2 (D, dv)$ onto its closed subspace $A^2 (D, dv)$
consisting of holomorphic functions on $D.$ It is well-known that $P$ is an integral operator defined on $L^p (D, dv)$ whose kernel $B(.,.),$
called the Bergman kernel, is the reproducing kernel of  $A^2 (D, dv).$ In this work, we consider the case where $D$ is a homogeneous Siegel
domain of type II and we are interested in the values of $p\geq 1$ for which the Bergman projector $P$ can be extended as a bounded
operator on $L^p (D, dv).$ More generally, we investigate the values $1\leq p, q \leq \infty$ for which the Bergman projector extends
to a bounded operator on Lebesgue mixed norm spaces $L^{p, q} (D).$

In fact, C. Nana \cite{Nana} determined a range of values $1\leq p, q \leq \infty$ for which the Bergman projector of a homogeneous
Siegel domain of type II extends as a bounded operator on Lebesgue mixed norm spaces $L^{p, q} (D).$ He even considered the case where
the Lebesgue measure $dv$ is replaced by standard weighted measures. Earlier in a joint work \cite{NT} with B. Trojan, the same author
considered the particular case of tube domains over homogeneous cones (homogeneous Siegel domains of type I). The purpose of the present
paper is to present a transference principle to deduce mixed norm estimates for Bergman projectors on homogeneous Siegel domains of
type II from analogous estimates on tube domains over associated cones. As an application, the results of \cite{Nana} can be
obtained as consequences of the results of \cite{NT}.

\section{Description of homogeneous cones and homogeneous Siegel domains of type II. Statement of the main results}
In this section, we recall the description of a homogeneous cone within the framework of $T$-algebras.
Next, we introduce homogeneous Siegel domains of type II and state our main results.

\subsection{Homogeneous cones}
We use the same notations as in \cite{Chua} and \cite{NT}. We denote by $\mathcal U$ a (real)
matrix algebra of rank $r$ with canonical decomposition
\begin{equation*}
\mathcal U =  \bigoplus\limits_{1\leq i, j \leq r}  \mathcal U_{ij}
\end{equation*}
such that $\mathcal U_{ij} \mathcal U_{jk} \subset \mathcal U_{ik}$ and $\mathcal U_{ij} \mathcal U_{lk} = \{0\} $ if $j\neq l.$ We assume that $\mathcal U$ has a structure of $T$-algebra (in the sense of \cite{V}) in which an involution is given by $x\mapsto x^\star.$ This structure implies that the subspaces $\mathcal U_{ij}$ satisfy: $\mathcal U_{ii} = \mathbb Rc_i$ where $c_i^2 = c_i$ and $dim \hskip 1truemm \mathcal U_{ij} = n_{ij} = n_{ji}.$ Also, the matrix
\begin{equation*}
\mathbf e = \sum_{j=1}^r c_j
\end{equation*}
is a unit element for the algebra $\mathcal U.$

Let $\rho$ be the unique isomorphism from $\mathcal U_{ii}$ onto $\mathbb R$ with $\rho (c_i) = 1$ for all $i=1,...,r.$ We shall consider the subalgebra
\begin{equation*}
\mathcal T =  \bigoplus\limits_{1\leq i \leq j \leq r}  \mathcal U_{i j}
\end{equation*}
of $\mathcal U$ consisting of upper triangular matrices and let
\begin{equation*}
H = \{t\in \mathcal T: \rho (t_{ii}) >  0, \hskip 2truemm i=1,...,r\}
\end{equation*}
be the subgroup of upper triangular matrices whose diagonal elements are positive.

Denote by $V$ the vector space of "Hermitian matrices" in $\mathcal U$
\begin{equation*}
V=  \{x\in \mathcal U: \hskip 2truemm x^\star = x\}.
\end{equation*}
If we set
\begin{equation*}
n_i = \sum_{j=1}^{i-1} n_{ji}, \quad \quad m_i = \sum_{j=i+1}^{r} n_{ij},
\end{equation*}
then
\begin{equation}\label{dim}
dim \hskip 1truemm V = n= r+\sum_{i=1}^r m_i = r+\sum_{i=1}^r n_i.
\end{equation}
The vector space $V$ becomes a Euclidean space with the inner product
\begin{equation*}
(x|y) = tr \hskip 1truemm (xy^\star)
\end{equation*}
where
\begin{equation*}
tr \hskip 1truemm (x) = \sum_{i=1}^r \rho (x_{ii}).
\end{equation*}
Next we define
\begin{equation*}
\Omega=  \{ss^\star: \hskip 2truemm s\in H\}.
\end{equation*}
By a theorem of Vinberg (\cite[p. 384]{V}), $\Omega$ is an open convex homogeneous cone containing no entire straight lines, in which the group $H$ acts simply transitively via the transformations
\begin{equation}\label{simplytransitive}
\pi (w): uu^\star \mapsto \pi (w)[uu^\star] = (wu)(u^\star w^\star) \quad \quad (w, u \in H).
\end{equation}
Thus, to every element $y\in \Omega$ corresponds a unique $t\in H$ such that
$$y=\pi (t)[\textbf e].$$
Like in \cite{NT}, we shall adopt the notation:
$$t\cdot \textbf e = \pi (t)[\textbf e].$$
We shall assume that $\Omega$ is irreducible, and hence rank $(\Omega)=r.$ All homogeneous convex cones can be constructed in this way
(\cite[p. 397]{V}).

As in \cite{NT}, we denote by $Q_j$ the fundamental rational functions in $\Omega$ given by
\begin{equation*}
Q_j (y) = \rho (t_{jj})^2, \quad \quad {\rm when} \hskip 2truemm y=t \cdot {\mathbf e} \in \Omega.
\end{equation*}
We consider the matrix algebra  $\mathcal U'$ which differs from $\mathcal U$ only on its grading, in the sense that
\begin{equation*}
\mathcal U'_{ij} = \mathcal U_{r+1-i, r+1-j} \quad \quad (i, j = 1,...,r).
\end{equation*}
It is proved in \cite{V} that $\mathcal U'$ is also a $T$-algebra and $V' = V$ where $V'$ is the subspace of $\mathcal U'$
consisting of Hermitian matrices. We define accordingly its subalgebra
\begin{equation*}
\mathcal T' =  \bigoplus\limits_{1\leq i \leq j \leq r}  \mathcal U'_{ij}
\end{equation*}
of $\mathcal U$ consisting of lower triangular matrices
and the subgroup $H'$ of $\mathcal T'$ whose diagonal elements are positive. We have
$$\mathcal T' = \{t^\star: t\in \mathcal T\} \quad {\rm and} \quad H' = H^* = \{t^\star: t\in H\}.$$
The corresponding homogeneous cone coincides with the dual cone of $\Omega,$ namely
\begin{equation*}
\Omega^*=  \{\xi \in V': (x|\xi) > 0, \hskip 2truemm \forall x \in \overline {\Omega} \setminus \{0\}\}.
\end{equation*}
One also has
\begin{equation*}
\Omega^*=  \{t^\star t: \hskip 2truemm t\in H\}.
\end{equation*}
(See \cite[p. 390]{V}).

For $\xi = t^\star t \in \Omega^*,$ we shall define
\begin{equation*}
Q^*_j (\xi) = \rho (t_{jj}^2).
\end{equation*}
The group $H'$ acts simply transitively on the cone $\Omega^*$ via the transformations
\begin{eqnarray}\label{starsimplytransitive}
\pi (w^\star): u^\star u\mapsto \pi (w^\star)[u^\star u] = (w^\star u^\star)(uw) \,\,\,(w^\star, u^\star \in H').
\end{eqnarray}
We write
$$t^\star \cdot \textbf e = \pi (t^\star)[\textbf e]\quad \quad (t^\star \in H').$$
We have  the following identity.
\begin{equation}\label{star}
Q_j^* (t^\star \cdot \mathbf e) = Q_j (t\cdot \mathbf e).
\end{equation}
In the sequel, we shall use the following notations: for all $x\in \Omega, \hskip 2truemm \xi \in \Omega^*$ and $\alpha = (\alpha_1, \alpha_2,...,\alpha_r) \in \mathbb R^r,$
\begin{equation*}
Q^\alpha (x) = \prod_{j=1}^r Q_j^{\alpha_j} (x) \quad  {\rm and} \quad (Q^*)^\alpha (\xi) = \prod_{j=1}^r (Q^*_j)^{\alpha_j} (\xi).
\end{equation*}
We identify a real number $\beta$ with the vector $(\beta, ...,\beta)\in \mathbb R^r$ and we write
\begin{equation*}
Q^\beta (x) = \prod_{j=1}^r Q_j^{\beta} (x) \quad  {\rm and} \quad (Q^*)^\beta (\xi) = \prod_{j=1}^r (Q^*_j)^{\beta} (\xi),
\end{equation*}
\begin{equation*}
Q^{\alpha + \beta} (x) = \prod_{j=1}^r Q_j^{\alpha_j + \beta} (x) \quad  {\rm and} \quad (Q^*)^{\alpha +\beta} (\xi) = \prod_{j=1}^r (Q^*_j)^{\alpha_j + \beta} (\xi).
\end{equation*}

 We put $\tau = (\tau_1, \tau_2,...,\tau_r) \in \mathbb R^r$ with
$$\tau_i = 1+\frac 12 (m_i +n_i).$$
Let $x\in \Omega,$ we have for $j=1,...,r$
\begin{equation} {\label{Qj}}
Q_j (\pi (t)[x]) = Q_j (t\cdot \mathbf e)Q_j (x).
\end{equation}
Therefore, for any $t\in H,$
$$Q^\tau (\pi (t)[x]) = det \hskip 1truemm \pi (t)Q^\tau (x)$$
since
$$det \hskip 1truemm \pi (t) = Q^\tau (t\cdot {\mathbf e}).$$
(See \cite[p. 388]{V}). The above properties are also valid if we replace $Q_j$ by $Q^*_j$ and $x\in \Omega$ by $\xi \in \Omega^*.$
In particular, for all $\xi\in \O^*$ and $t^\star \in H',$ we have for $j=1,...,r$

\begin{equation} {\label{QjStar}}
Q^*_j (\pi (t^\star)[\xi]) = Q^*_j (t^\star\cdot \mathbf e)Q^*_j (\xi).
\end{equation}

\subsection{Homogeneous Siegel domains of type II}
Let $V^{\mathbb C} = V+iV$ be the complexification of $V.$ Then each element of $V^{\mathbb C}$ is identified with a vector in $\mathbb C^n.$
The coordinates of a point $z\in \mathbb C^n$ are arranged in the form
\begin{equation}\label{z}
z=(z_{11}, z_2, z_{22},...,z_r, z_{rr})
\end{equation}
where
\begin{equation}\label{z_j}
z_j = (z_{1j},...,z_{j-1, j}), \quad \quad j=2,...,r
\end{equation}
and
\begin{equation}\label{z_{ij}}
z_{jj} \in \mathbb C, \quad z_{ij} = (z^{(1)}_{ij},...,z^{(n_{ij})}_{ij})\in \mathbb C^{n_{ij}}, \quad 1\leq i < j \leq r.
\end{equation}
For all $j=1,...,r$ we denote $e_{jj} = z,$ where $z_{jj} = 1$ and the other coordinates are equal to zero and we denote
\begin{equation*}
e=\sum_{j=1}^r e_{jj} = (1, 0, 1,...,0, 1).
\end{equation*}
Let $m\in \mathbb N.$ For each row vector $u\in \mathbb C^m,$ we denote $u'$ the transpose of $u.$ Given $m\times m$ Hermitian matrices
$\widetilde{H}_{11},\widetilde{H}_2,\widetilde{H}_{22},...,\widetilde{H}_r, \widetilde{H}_{rr}$ such that for every $j=1,...,r,$ we have
$$u\widetilde{H}_{jj} \bar v' \in \mathbb C, \quad \quad u\widetilde{H}_j \bar v' \in \mathbb C^{n_j},$$
we define a $\Omega$-Hermitian, homogeneous form $F: \mathbb C^m \times \mathbb C^m \rightarrow \mathbb C^n$ as
\begin{equation}\label{form}
F (u, v) = (u\widetilde{H}_{11} \bar v',u\widetilde{H}_2 \bar v',u\widetilde{H}_{22} \bar v'...,u\widetilde{H}_r \bar v', u\widetilde{H}_{rr} \bar v'), \quad \quad (u, v) \in \mathbb C^m \times \mathbb C^m
\end{equation}
such that
\begin{enumerate}
\item[(i)]
$F(u, u) \in \overline {\Omega}:$
\item[(ii)]
$F(u, u) = 0$ if and only if $u=0;$
\item[(iii)]
for every $t\in H,$ there exists $\tilde t \in GL (m, \mathbb C)$ such that
\begin{equation}\label{tilde}
t\cdot F(u,u) = F(\tilde tu, \tilde tu).
\end{equation}
\end{enumerate}
The point set
\begin{equation}\label{Siegel}
D(\Omega, F) = \{(z, u) \in \mathbb C^n \times \mathbb C^m: \Im m \hskip 1truemm z - F(u, u) \in \Omega\}
\end{equation}
in $\mathbb C^{n+m}$ is called a Siegel domain of type II associated to the open convex homogeneous cone $\Omega$ and to the $\Omega-$Hermitian,
homogeneous form $F.$ Recall that if $m=0,$ the domain $D$ is a tube type Siegel domain or a homogeneous Siegel domain of type I,
associated with the cone $\Omega,$ or the tube domain over the homogeneous cone $\Omega,$ considered by the authors of \cite{NT}.

Using (\ref{z}), we write
$$F(u, u) = (F_{11}(u, u), F_2(u, u), F_{22}(u, u),...,F_r(u, u), F_{rr}(u, u))$$
where for $i=1,...,r$ and $j=2,...,r,$
$$F_{ii} (u, u) = u\widetilde{H}_{ii} \bar u', \quad F_j (u, u) = u\widetilde {H}_j \hat u'= (F_{1j} (u, u),...,F_{j-1, j} (u, u))$$
and for $1\leq i < j \leq r$ and $\lambda=1,...,n_{ij},$
$$F_{ij} (u, u)= (F^{(1)}_{ij} (u, u),...,F^{(n_{ij})}_{ij} (u, u)), \quad F^{(\lambda)}_{ij} (u, u)=u\widetilde{H}^{(\lambda)}_{ij} \bar u'.$$
The space $\mathbb C^m$ decomposes into the direct sum of subspaces $\mathbb C^{b_1} \oplus...\oplus \mathbb C^{b_r}$ on which are
concentrated the Hermitian forms $F_{jj},$ that is, with appropriate coordinates, we have for $i=1,...,r,$
\begin{equation}\label{decomp}
\widetilde{H}_{ii} = \mbox{diag} (0_{(b_1)},...,0_{(b_{i-1}}, I_{(b_i)},0_{(b_{i+1})},...,0_{(b_r)})
\end{equation}
where $0_{(b_k)}$ and $I_{(b_k)}$ denote respectively the null matrix and the identity matrix of the vector space
$\mathbb C^{b_k}$ for all $k=1,...,r.$ (See for instance \cite[ pp. 127-129]{X}.)

In the sequel, we denote $b$ the vector
$$b=(b_1,...,b_r)\in \mathbb N^r.$$
and we denote $dv$ the Lebesgue measure in $\mathbb C^m.$  Let $\nu = (\nu_1,...,\nu_r) \in \mathbb R^r.$ For all $(x+iy, u) \in D,$ we shall consider the measure
$$dV_\nu (x+iy, u) = Q^{\nu-\frac b2 - \tau} (y - F(u,u))dxdydv(u).$$

 We denote by $L^p_\nu (D), \hskip 2truemm 1\leq p \leq \infty,$ the Lebesgue space $L^p (D, dV_\nu (z, u)).$ The weighted Bergman
 space $A^p_\nu (D)$ is the (closed) subspace of $L^p_\nu (D)$ consisting of holomorphic functions. In order to have a non-trivial subspace,
 we take $\nu = (\nu_1,...,\nu_r) \in \mathbb R^r$ such that $\nu_i > \frac {m_i  + b_i}2, \hskip 2truemm i=1,...,r.$ (See \cite{Nana}.)

The orthogonal projector of the Hilbert space $L^2_\nu (D)$ on its closed subspace $A^2_\nu (D)$ is the weighted Bergman
projector $P_\nu.$ We recall that $P_\nu$ is defined by the integral
$$P_\nu f(z, u) = \int_D B_\nu ((z, u), (w, v))f(w, v)dV_\nu (w, v), \quad (z, u) \in D,$$
where for a suitable constant $d_{\nu, b},$
$$B_\nu ((z, u), (w, v))=d_{\nu, b}Q^{-\nu - \frac b2 -\tau} \left(\frac {z-\bar w}{2i}-F(u,v)\right)$$
is the weighted Bergman kernel i.e., the reproducing kernel of $A^2_\nu (D).$ (See \cite[Proposition II.5]{BT}.) The scalar product $\langle \cdot, \cdot \rangle_\nu$ is given by
$$\langle f, g\rangle_\nu = \int_{D} f(z, u)\overline{g(z, u)}dV_\nu (z, u).$$

Let us now introduce mixed norm spaces. For $1\leq p \leq \infty$ and $1\leq q < \infty,$ let $L^{p, q}_\nu (D)$ be the space of measurable functions on $D$ such that
$$||f||_{L^{p, q}_\nu (D)} := \left(\int_{\mathbb C^m} \int_{\Omega + F(u, u) } \left(\int_V |f(x+iy, u)|^p dx\right)^{\frac qp} Q^{\nu-\tau-\frac b2} (y-F(u, u)dydv(u)\right)^{\frac 1q}$$
is finite (with obvious modification if $p=\infty)$. As before, we call $A^{p, q}_\nu (D)$ the (closed) subspace of $L^{p, q}_\nu (D)$ consisting of holomorphic functions.
Note that for $p=q,$ the Lebesgue mixed norm space $L^{p, q}_\nu (D)$ coincides with the Lebesgue space $L^p_\nu (D)$ and the mixed norm Bergman space $A^{p, q}_\nu (D)$ coincides with the Bergman space $A^p_\nu (D).$

The unweighted case corresponds to $\nu = \tau + \frac b2.$

\subsection{Example}\label{type 2} The homogeneous (non-symmetric) Siegel domain of type II introduced by Pyateckii-Shapiro is associated to the spherical cone
$$\Gamma:= \left\{(y_{11}, y_{12}, y_{22})\in \mathbb R^3: Q_1 (y) = y_{11}> 0, Q_2 (y) = y_{22}-\frac {(y_{12})^2}{y_{11}} > 0\right\}$$
and to the $\Gamma-$Hermitian, homogeneous form
\begin{eqnarray*}
\begin{array}{clcr}
F:&\mathbb C^2  \longrightarrow \mathbb C^3&\\
&(u,v)\mapsto F(u, v) = (0, 0, u\bar v).&
\end{array}
\end{eqnarray*}
In this domain $D(\Gamma, F),$ we have
$$n=3, \hskip 2truemm r=2, \hskip 2truemm n_1=m_2=0, \hskip 2truemm n_2=m_1=1,\,\,\,b_1=0,\,\,\,b_2=1,  \hskip 2truemm \tau =\left(\frac 32, \frac 32\right).$$
The (unweighted) Bergman kernel of $D(\Gamma, F)$ has the following expression:
$$B ((z, u), (w, v)) = CQ_1^{-3}\left(\frac {z-\overline {w}}{2i}-F(u, v)\right)Q_2^{-4}\left(\frac {z-\overline {w}}{2i}-F(u, v)\right)$$
which can be written
$$B ((z, u), (w, v)) = C\left(\frac {z_{11}-\overline {w_{11}}}{2i}\right)^{-3}\left(\frac {z_{22}-\overline {w_{22}}}{2i}-u\bar v-\frac{\left(\frac {z_{12}-\overline {w_{12}}}{2i}\right)^2}{\frac {z_{11}-\overline {w_{11}}}{2i}}\right)^{-4}.$$
For $\nu =(\nu_1, \nu_2) \in \mathbb R^2$ such that $\nu_j > \frac 12, \hskip 2truemm j=1, 2,$ the associated (weighted) Bergman kernel is given by
$$B_\nu ((z, u), (w, v)) = d_\nu Q_1^{-\nu_1-\frac 32}\left(\frac {z-\overline {w}}{2i}-F(u, v)\right)Q_2^{-\nu_2-2}\left(\frac {z-\overline {w}}{2i}-F(u, v)\right).$$

\subsection{Statement of the results}
The main result of our paper is the following.

\begin{thm}
Let $\nu = (\nu_1,...,\nu_r) \in \mathbb R^r$ such that $\nu_j > \frac {m_j +n_j + b_j}2, \hskip 2truemm j=1,...,r.$ Assume that the Bergman projector $\mathbb P_{\nu - \frac b2}$ of the tube domain $T_\Omega$ over the homogeneous cone $\Omega$ is bounded on $L^{p, q}_{\nu - \frac b2} (T_\Omega).$ Then the Bergman projector $P_\nu$ of the homogeneous Siegel domain $D$ of type II associated to $\Omega$ and to the $\Omega-$Hermitian homogeneous form $F$ is bounded on $L^{p, q}_\nu (D).$
\end{thm}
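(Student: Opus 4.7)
The overall strategy is to realize $D$ as a trivial fiber bundle over $T_\Omega$ via the real-analytic diffeomorphism
$$\Phi : D \to T_\Omega \times \mathbb{C}^m, \qquad \Phi(z,u) = \bigl(z - iF(u,u),\, u\bigr),$$
and transfer the boundedness of $\mathbb{P}_{\nu - b/2}$ along $\Phi$. Since $\mathrm{Im}\bigl(z - iF(u,u)\bigr) = \mathrm{Im}\,z - F(u,u) \in \Omega$ on $D$, this is a well-defined bijection with real Jacobian $1$; pulling back the measure gives $\Phi^{\ast} dV_\nu = Q^{\nu - \tau - b/2}(\mathrm{Im}\,\tilde z)\,d\tilde z\,dv(u)$, so $\Phi$ induces an isometric identification
$$L^{p,q}_\nu(D) \;\cong\; L^q\!\bigl(\mathbb{C}^m, dv(u);\, L^{p,q}_{\nu - b/2}(T_\Omega)\bigr),\qquad f\,\longmapsto\,g(\tilde z, u) := f\bigl(\tilde z + iF(u,u),\, u\bigr).$$
The right-hand side is the natural space on which the hypothesis on $\mathbb{P}_{\nu - b/2}$ acts (in the first factor), so it suffices to bound the conjugated operator $\widetilde{P}_\nu := \Phi \circ P_\nu \circ \Phi^{-1}$ there.

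Substituting $z = \tilde z + iF(u,u)$, $w = \tilde w + iF(v,v)$ in $B_\nu((z,u),(w,v))$ and using sesquilinearity with $F(v,u) = \overline{F(u,v)}$ yields the transferred kernel argument
$$\frac{z - \bar w}{2i} - F(u,v) \;=\; \frac{\tilde z - \overline{\tilde w}}{2i} + \tfrac{1}{2}F(u-v,u-v) - i\,\mathrm{Im}\,F(u,v).$$
The central task is to integrate out $v \in \mathbb{C}^m$ so as to recover the tube kernel at parameter $\nu - b/2$. The prototype is the Forelli--Rudin/Gindikin identity
$$\int_{\mathbb{C}^m} Q^{-\nu - b/2 - \tau}\!\bigl(A + F(v,v)\bigr)\,dv(v) \;=\; C_{\nu, b}\,Q^{-(\nu - b/2) - \tau}(A),\qquad A \in \Omega + iV,$$
proved by using (\ref{decomp}) to factorize the integral over $\mathbb{C}^{b_1} \oplus \dots \oplus \mathbb{C}^{b_r}$ into standard one-variable beta-type integrals, producing the shift by $b$ in the parameter. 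For general $u$, inserting Gindikin's Laplace representation
$$Q^{-\nu - b/2 - \tau}(Z) \;=\; c\int_{\Omega^*}(Q^*)^{\nu + b/2}(\xi)\,e^{-(Z|\xi)}\,d\xi$$
reduces the $v$-integration, for each $\xi \in \Omega^*$, to a Gaussian-oscillatory integral $\int_{\mathbb{C}^m} \exp\!\bigl(-\tfrac12 F_\xi(u - v, u - v) + i\,\mathrm{Im}\,F_\xi(u,v)\bigr)\,dv(v)$, where $F_\xi(u,v) := (F(u,v)\mid\xi)$ is a positive semi-definite Hermitian form on $\mathbb{C}^m$; an explicit evaluation in the normal-form basis yields the factor $(Q^*)^{-b}(\xi)$, which absorbs into the Laplace representation and lowers its parameter from $\nu + b/2$ down to $\nu - b/2$, reproducing the tube Bergman kernel.

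The outcome is a representation of the schematic form
$$\widetilde{P}_\nu g(\tilde z, u) \;=\; \int_{\mathbb{C}^m}\bigl[\mathbb{P}_{\nu - b/2}\,g(\cdot, v)\bigr](\tilde z)\,K(u,v)\,dv(v),$$
with $K(u,v)$ a bounded $L^1$-kernel on $\mathbb{C}^m$ arising from the Gaussian/phase factors. The claimed boundedness then follows from Minkowski's integral inequality in the outer $L^q(\mathbb{C}^m; dv(u))$-norm combined with the assumed $L^{p,q}_{\nu - b/2}(T_\Omega)$-boundedness of $\mathbb{P}_{\nu - b/2}$ in the first factor.

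The principal obstacle is the Gaussian-with-phase integration in $v$: the imaginary term $-i\,\mathrm{Im}\,F_\xi(u,v)$ rules out a naive square-completion, so one must work component by component in the normal form (\ref{decomp}) and track gamma-type prefactors uniformly in $\xi \in \Omega^*$, so that the resulting $(Q^*)^{-b}(\xi)$ factor can be re-inserted into the Gindikin representation to produce the clean parameter shift $\nu + b/2 \mapsto \nu - b/2$ on the tube side. The non-symmetry of $\Omega$ and the possible off-block components of $F$ force this accounting to be carried out along the full grading of the $T$-algebra.
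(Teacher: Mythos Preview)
Your kernel identity
\[
\frac{z-\bar w}{2i}-F(u,v)=\frac{\tilde z-\overline{\tilde w}}{2i}+\tfrac12 F(u-v,u-v)-i\,\mathrm{Im}\,F(u,v)
\]
is correct, and the isometric identification $L^{p,q}_\nu(D)\cong L^q\bigl(\mathbb C^m;L^{p,q}_{\nu-b/2}(T_\Omega)\bigr)$ is exactly the right way to see the fibered structure. The gap is in the step you label ``the outcome'': the claimed schematic factorization
\[
\widetilde P_\nu g(\tilde z,u)=\int_{\mathbb C^m}\bigl[\mathbb P_{\nu-b/2}\,g(\cdot,v)\bigr](\tilde z)\,K(u,v)\,dv(v)
\]
does not follow from your Laplace/Gaussian computation. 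The Forelli--Rudin/Gindikin integral you invoke lowers the parameter by $b$ \emph{when you integrate the kernel alone} in $v$; here the integrand also contains the unknown function $g(\tilde w,v)$, so the $v$-integration cannot be carried out independently of the $\tilde w$-integration. Concretely, after inserting the Laplace representation you have, for each fixed $\xi\in\Omega^*$, a factor $e^{-\frac12 F_\xi(u-v,u-v)+i\,\mathrm{Im}\,F_\xi(u,v)}$ multiplying $g(\tilde w,v)$: this is not a Gaussian in $v$ that can be evaluated in closed form, because $g$ is arbitrary. What you would actually get is an operator whose ``kernel in $(u,v)$'' still depends on $\xi$, hence on $\tilde z,\tilde w$ through the remaining Laplace integral; there is no bounded $L^1$ kernel $K(u,v)$ left over. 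In particular there is no reason the $(\tilde z,\tilde w)$-part should collapse to the \emph{tube} Bergman kernel at parameter $\nu-b/2$ rather than (a shift of) the one at parameter $\nu+b/2$.

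The paper does not attempt any such direct kernel factorization. It passes instead through the Box operator and a Hardy-type inequality. The hypothesis that $\mathbb P_{\nu-b/2}$ is bounded on $L^{p,q}_{\nu-b/2}(T_\Omega)$ is used only via Proposition~\ref{Boxe}: it forces $\Box^k$ to be an isomorphism $A^{p,q}_{\nu-b/2}(T_\Omega)\to A^{p,q}_{\nu-b/2+kq\rho}(T_\Omega)$, hence a slice-wise inequality
\[
\|f_u\|_{A^{p,q}_{\nu-b/2}(T_\Omega)}\le C\,\|\Box^k f_u\|_{A^{p,q}_{\nu-b/2+kq\rho}(T_\Omega)}
\]
for almost every $u$, where $f_u(\tilde z)=f(\tilde z+iF(u,u),u)$. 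Integrating in $u$ (this is where your fiber identification is used, and it is the only place) yields the global Hardy inequality on $D$, and a separate argument (Theorem~5.1, using that $P_{\nu+k\rho}$ is bounded on $L^{p,q}_{\nu+kq\rho}(D)$ for $k$ large by the Schur-type Corollary~\ref{P+}) converts that Hardy inequality into boundedness of $P_\nu$. So the transference is through $\Box^k$, not through a kernel identity; your proposed shortcut would need a genuinely new argument at the ``schematic form'' step.
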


As a consequence, the following result of \cite{Nana} for $D$ is a
consequence of the corresponding result of \cite{NT} for tube domains over homogeneous cones (see Theorem \ref{boundedness} below).

\begin{thm}
Let $\nu = (\nu_1,...,\nu_r) \in \mathbb R^r$ such that $\nu_j > \frac {m_j +n_j + b_j}2, \hskip 2truemm j=1,...,r.$ We set
$q_\nu := 1+\min \limits_{1\leq j \leq r} \frac {\nu_j - \frac {m_j}2 -\frac {b_j}2}{\frac {n_j}2}.$ The Bergman projector
$P_\nu$ extends to a bounded operator on $L^{p, q}_\nu (D)$ for
\begin{eqnarray*}
\left \{
\begin{array}{clcr}
0&\leq \frac 1p &\leq \frac 12\\
\frac 1{q_\nu p'}&<\frac 1q &< 1-\frac 1{q_\nu p'}
\end{array}
\right.
\quad \quad or \quad \quad
\left \{
\begin{array}{clcr}
\frac 12&\leq \frac 1p &\leq 1\\
\frac 1{q_\nu p}&<\frac 1q &< 1-\frac 1{q_\nu p}
\end{array}
\right.
.
\end{eqnarray*}
\end{thm}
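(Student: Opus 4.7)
The plan is to deduce Theorem 2 directly from the transference principle (Theorem 1) combined with the analogous boundedness result of \cite{NT} for tube domains (cited below as Theorem \ref{boundedness}). The strategy is to translate the weight by $\nu \mapsto \nu - \frac{b}{2}$: first establish boundedness of the tube Bergman projector $\mathbb{P}_{\nu - b/2}$ on $L^{p,q}_{\nu - b/2}(T_\Omega)$, and then transfer it to boundedness of $P_\nu$ on $L^{p,q}_\nu(D)$.

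First I would check that this shift is compatible with the admissibility hypothesis. The assumption $\nu_j > \frac{m_j + n_j + b_j}{2}$ is exactly equivalent to $(\nu - \frac{b}{2})_j > \frac{m_j + n_j}{2}$, which is precisely the non-triviality condition on the weight required in \cite{NT} for the tube $T_\Omega$. Thus $\mathbb{P}_{\nu - b/2}$ is a well-defined bounded operator on $A^2_{\nu-b/2}(T_\Omega)$ and Theorem \ref{boundedness} may be applied at this weight. Next I would verify that the critical index matches. The tube-domain result is phrased in terms of
$$q_\mu^{\textrm{tube}} := 1 + \min_{1 \leq j \leq r} \frac{\mu_j - m_j/2}{n_j/2}$$
evaluated at the weight $\mu$; with $\mu = \nu - \frac{b}{2}$ one computes
$$q_{\nu - b/2}^{\textrm{tube}} = 1 + \min_{1 \leq j \leq r} \frac{\nu_j - \frac{m_j}{2} - \frac{b_j}{2}}{\frac{n_j}{2}} = q_\nu.$$
Therefore the two regions in the $(1/p, 1/q)$-plane coming from Theorem \ref{boundedness} applied with weight $\nu - \frac{b}{2}$ are exactly the two regions appearing in the statement to be proved.

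Finally I would invoke Theorem 1 with the weight $\nu$: the boundedness of $\mathbb{P}_{\nu - b/2}$ on $L^{p,q}_{\nu - b/2}(T_\Omega)$ just established lifts, by the transference principle, to the boundedness of $P_\nu$ on $L^{p,q}_\nu(D)$ over the same range of exponents, completing the proof. The only genuine obstacle is bookkeeping: one must carefully track the shift $\nu \mapsto \nu - \frac{b}{2}$ so that the weight condition, the definition of $q_\nu$, and the index ranges on the tube match the ones stated here for $D$. Beyond that, no further analysis is needed, since all the hard estimates are encapsulated in Theorems 1 and \ref{boundedness}.
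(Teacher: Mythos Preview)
Your proposal is correct and follows exactly the approach the paper intends: the paper states explicitly that Theorem 2.2 is a consequence of the transference principle (Theorem 2.1) applied to the tube-domain result of \cite{NT} (Theorem \ref{boundedness}), and your shift $\nu\mapsto\nu-\frac{b}{2}$ with the verification that the admissibility condition and the critical index $q_\nu$ match is precisely the bookkeeping required to make this deduction precise.
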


Our theorem  implies improvements of $L^{p, q}_\nu$ estimates for Bergman projectors in homogeneous
Siegel domains of type II associated to Lorentz cones for some particular values of $\nu \in \mathbb R^2$. An interesting case is the Pyateckii-Shapiro Siegel domain of type II
defined in Example \ref{type 2}.
For this domain, the problem under study  was investigated in \cite[section 6]{Go} and a particular case of Theorem 2.1 was used there. We point out that the underlying spherical cone is isomorphic to the Lorentz cone of $\mathbb R^3.$
The $L^{p, q}_\nu$ estimates (with $\nu$ real, $\nu =(\nu,\cdots, \nu))$ for the Bergman projectors on tube
domains over Lorentz cones are now completely settled after the works of \cite{BBPR}, \cite{BBGR} and \cite{BBGRS}
(cf. also \cite{BBGNPR} and \cite{B}), and the recent proof of the $l^2$-decoupling conjecture by Bourgain and Demeter \cite{BD}. This goal is achieved via a particular case of the following more general theorem where $\nu= (\nu_1, \nu_2)$ is a vector of $\mathbb R^2.$ We denote by $\Lambda_n$ the Lorentz cone of $\mathbb R^n, \hskip 2truemm n\geq 3,$ and we adopt the following notations.
$$p_\nu =1+\frac {\nu_2 +\frac n2}{(\frac n2 -1 -\nu_2)_+};$$
$$q_\nu= 1+\frac {\nu_2}{\frac {n}2-1};$$
$$q_\nu (p)= p_\sharp q_\nu \quad {\rm with} \quad p_\sharp = \min (p, p');$$
$$\tilde q_{\nu, p}
=\frac {\nu_2 + \frac n2-1}{(\frac n{2p'} -1)_+}.$$

\begin{thm}
Let $\nu= (\nu_1, \nu_2)\in \mathbb R^2$ such that $\nu_1 > \frac n2 -1$ and $\nu_2 >0.$ The weighted Bergman $P_\nu$ of the tube domain $T_{\Lambda_n}$ is bounded in $L^{p, q}_\nu (T_{\Lambda_n})$ for the following values of $p, q$ and $\nu.$
\begin{enumerate}
\item
$\frac {n-2}{\nu_2 +\frac n2 -1}<p<\frac {n-2}{\frac n2 -1-\nu_2 }, \hskip 2truemm q'_\nu (p)<q<q_\nu (p)$ provided $0<\nu_2 < \frac n2 -1;$
\item
$1\leq p \leq \infty, \hskip 2truemm q'_\nu (p)<q<q_\nu (p)$ provided $\nu_2 \geq \frac n2 -1;$
\item
$2\leq p\leq \frac {2n}{n-2}, \hskip 2truemm p<p_\nu$ and $q'_\nu (p) <q<2q_\nu;$
\item
$p_\nu >p>\frac {2n}{n-2}$ and $2<q<\widetilde q_{\nu, p}$
 provided $\frac {n-2}{2n}<\nu_2 < \frac n2 -1;$
\item
$p>\frac {2n}{n-2}$ and $2<q<\widetilde q_{\nu, p}$ provided $\nu_2 \geq \frac n2 -1;$
\item
the couples $(p, q)$ obtained by complex interpolation from the previous couples.
\end{enumerate}
\end{thm}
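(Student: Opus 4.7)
My plan is to treat Theorem 2.3 as a refinement, for Lorentz cones, of the scalar-weight $L^{p,q}_\nu$ theory for Bergman projectors on tube domains developed in \cite{BBPR}, \cite{BBGR} and \cite{BBGRS}, now extended to the full two-parameter weight $\nu = (\nu_1, \nu_2) \in \mathbb R^2$ using the $\ell^2$-decoupling inequality of Bourgain--Demeter \cite{BD}.

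I would begin by exploiting the Laplace-transform/Paley--Wiener realization of $A^{p,q}_\nu(T_{\Lambda_n})$ as a weighted Fourier-restriction space on $\Lambda_n^*$ (for $p \leq 2$, and via duality when $p \geq 2$), so that the action of $P_\nu$ becomes multiplication by $e^{-(y|\xi)}$ composed with inverse Fourier transform, and $\|P_\nu f\|_{L^{p,q}_\nu}$ is controlled by a weighted $L^q$ norm on the cone. Cases (1) and (2) then follow from Schur's test applied to the Bergman kernel with test functions $(Q^*)^{\alpha}(\xi)$, $\alpha = (\alpha_1,\alpha_2) \in \mathbb R^2$, combined with the Beta-type integral formulas on $\Lambda_n$; the admissible range of $\alpha$, dictated by the bivariate exponent $\nu$, produces the factor $p_\sharp$ and the threshold $q_\nu(p) = p_\sharp q_\nu$. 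The dichotomy $0 < \nu_2 < \tfrac n2 - 1$ in (1) versus $\nu_2 \geq \tfrac n2 - 1$ in (2) corresponds to whether both components of the weight $(Q^*)^{-\nu - \tau}$ are locally integrable near the $Q^*_2 = 0$ face of $\Lambda_n^*$, and is exactly what forces one to restrict $p$ in (1) to the interval centered at $2$.

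Cases (3)--(5) are the core of the theorem and rest on cone decoupling. Following the scheme of \cite{BBGRS}, I would carry out a Whitney decomposition of $\Lambda_n^*$ into anisotropic dyadic pieces adapted to the level sets of the eigenvalues of the underlying $T$-algebra, apply the $\ell^2$ cone-decoupling inequality of \cite{BD} to control the $L^p$-norm in $x$ of a Fourier restriction to the union of Whitney pieces at a given scale, and then sum in the scales. Inserting the weight $(Q^*)^{-\nu-\tau}$ and summing two geometric series produces precisely the thresholds $p_\nu$, $2 q_\nu$ and $\widetilde q_{\nu,p}$ appearing in (3)--(5). The main obstacle I foresee lies in the bookkeeping for the vector weight: \cite{BBGRS} effectively weight only one Whitney parameter, whereas here one must separately track how $\nu_1$ controls integrability along the ``large eigenvalue'' direction of $\Lambda_n^*$ and how $\nu_2$ controls it transversally; it is this independent summation which makes the sharp thresholds in (3)--(5) depend on $\nu_2$ alone while leaving $\nu_1$ free in a half-line. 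Case (6) is then an immediate consequence of Stein's complex interpolation theorem applied to the analytic family $P_{(\nu_1 + s,\,\nu_2 + s)}$, with $s$ varying in a suitable strip, yielding all convex combinations of the admissible exponents produced in (1)--(5).
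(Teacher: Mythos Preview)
Your overall architecture for cases (3)--(5) --- a Whitney-type decomposition of $\Lambda_n^*$ combined with the Bourgain--Demeter $\ell^2$ cone decoupling, followed by summation over scales --- is indeed the mechanism the paper uses, although the paper packages it through the Besov-space/Littlewood--Paley formalism of Debertol \cite{DD,DD1} and \cite{BBGR} (Theorems~6.1--6.3, Corollaries~6.5 and~6.8) rather than working directly in a Fourier-restriction picture.

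There is, however, a genuine gap in your treatment of cases (1) and (2). Schur's test with power-type test functions on the positive Bergman operator yields only the $p$-\emph{independent} range $q'_\nu<q<q_\nu$; this is exactly what Theorem~3.9 and Corollary~3.11 of the paper compute. The strict improvement to $q_\nu(p)=p_\sharp q_\nu$ cannot come from a positive-kernel argument: the factor $p_\sharp=\min(p,p')$ encodes oscillatory cancellation in the $x$-variable and is invisible to absolute-value estimates. In the paper (following \cite{DD1}) this gain is obtained via the Besov description of $A^{p,q}_\nu$: boundedness of $\mathbb P_\nu$ on $L^{p,q}_\nu$ is shown to be equivalent to the map $\mathcal C=\mathcal L\circ\mathcal F$ being an isomorphism $B^{p,q}_\nu\to A^{p,q}_\nu$ (Theorem~6.2), and the $p_\sharp$ improvement appears when one controls $\|F(\cdot+iy)\|_p$ by the Besov norm of the boundary datum. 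Your Schur step would recover only the inner rectangle, not the full hexagon of (1)--(2).

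A second, smaller point: the paper does not obtain the clean endpoint $\widetilde q_{\nu,p}$ in cases (4)--(5) directly from the decoupling summation. The raw output of the decoupling argument (Corollary~6.8, parts (2)--(3)) carries an extra constraint $q<2\,\dfrac{\nu_1-\frac n2+1}{\frac n2-1-\frac np}$ depending on $\nu_1$; the sharp bound $2<q<\widetilde q_{\nu,p}$ is reached in Corollary~6.9 by interpolating the decoupling region at $p=\tfrac{2n}{n-2}$ against the already-established region of case~(1). Your sketch lumps (3)--(5) together as coming straight from the Whitney sums, which would leave you with a $\nu_1$-dependent threshold. Finally, the interpolation in (6) is ordinary complex interpolation for the \emph{fixed} operator $P_\nu$ acting between different $L^{p,q}_\nu$ spaces, not interpolation of the analytic family $P_{(\nu_1+s,\nu_2+s)}$; the latter would shift the weight and is not what is being asserted.
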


\vskip 20truemm

\begin{figure}[tbph]
	\centering
	\includegraphics[width=0.7\linewidth]{./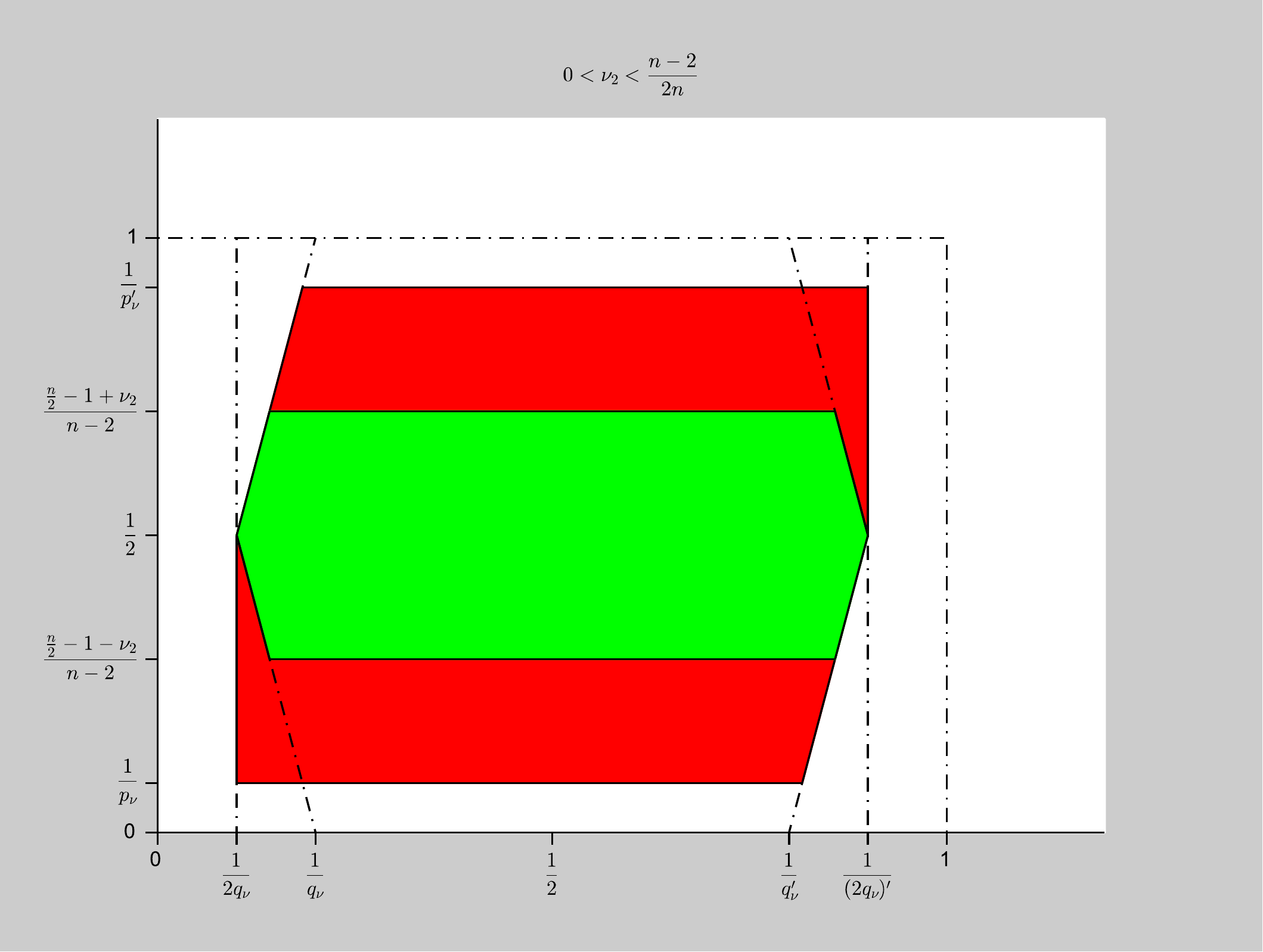}
	\caption{}
	\label{fig1}
\end{figure}

\vskip 20truemm

\begin{figure}[tbph]
	\centering
	\includegraphics[width=0.7\linewidth]{./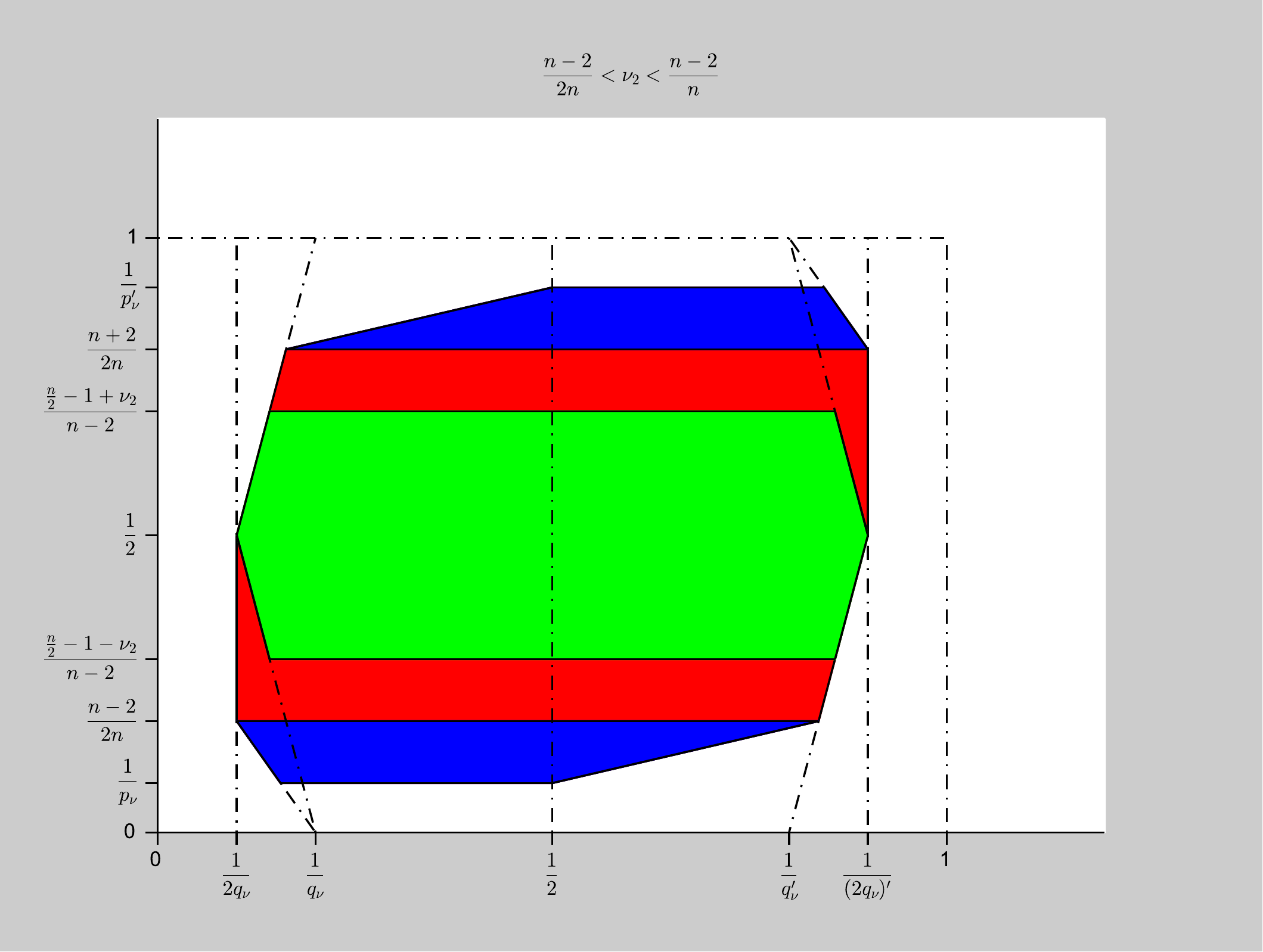}
	\caption{}
	\label{fig2}
\end{figure}

\vskip 20truemm

\begin{figure}[tbph]
	\centering
	\includegraphics[width=0.7\linewidth]{./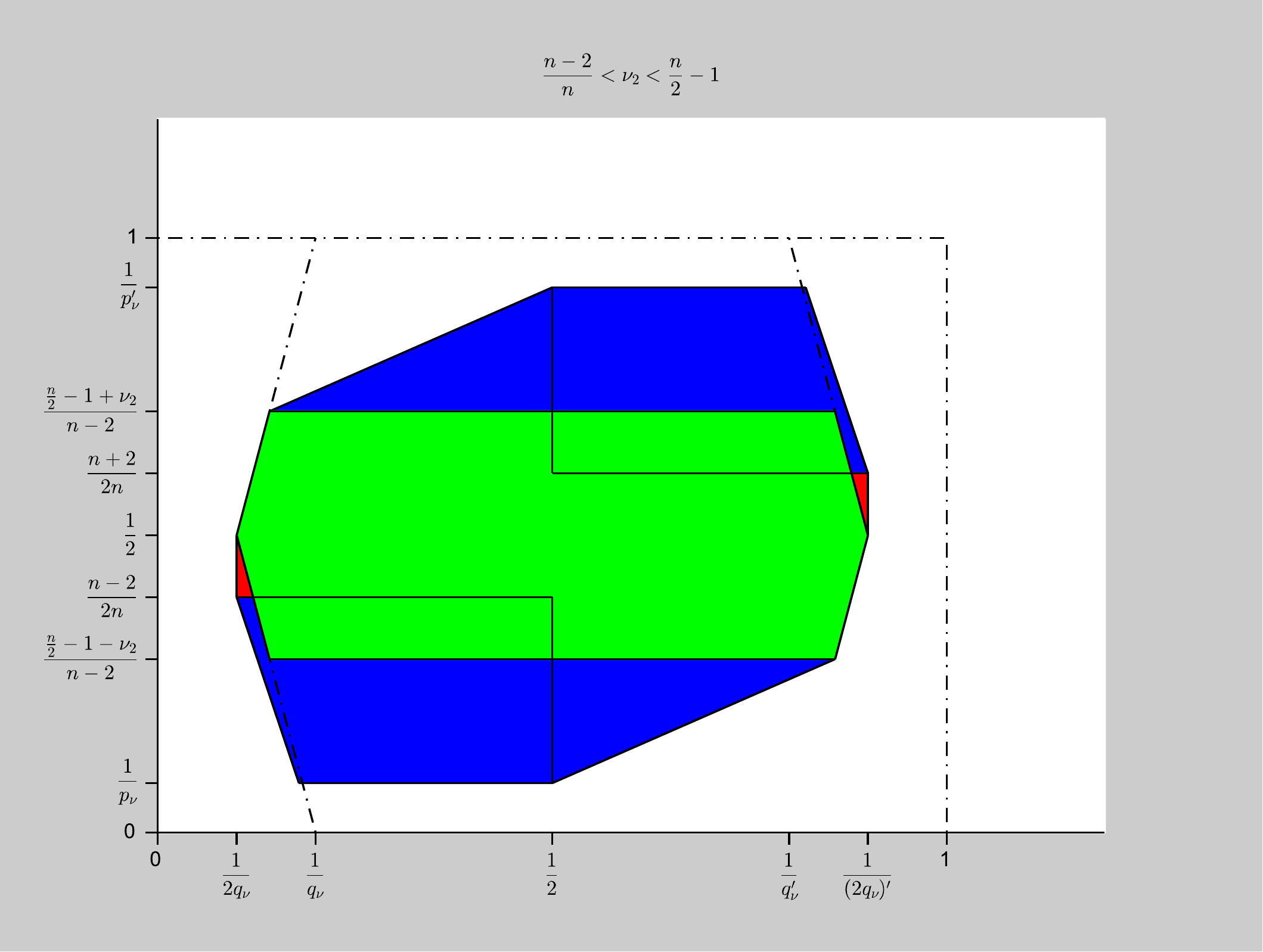}
	\caption{}
	\label{fig3}
\end{figure}

\vskip 2truemm

\begin{figure}[tbph]
	\centering
	\includegraphics[width=0.7\linewidth]{./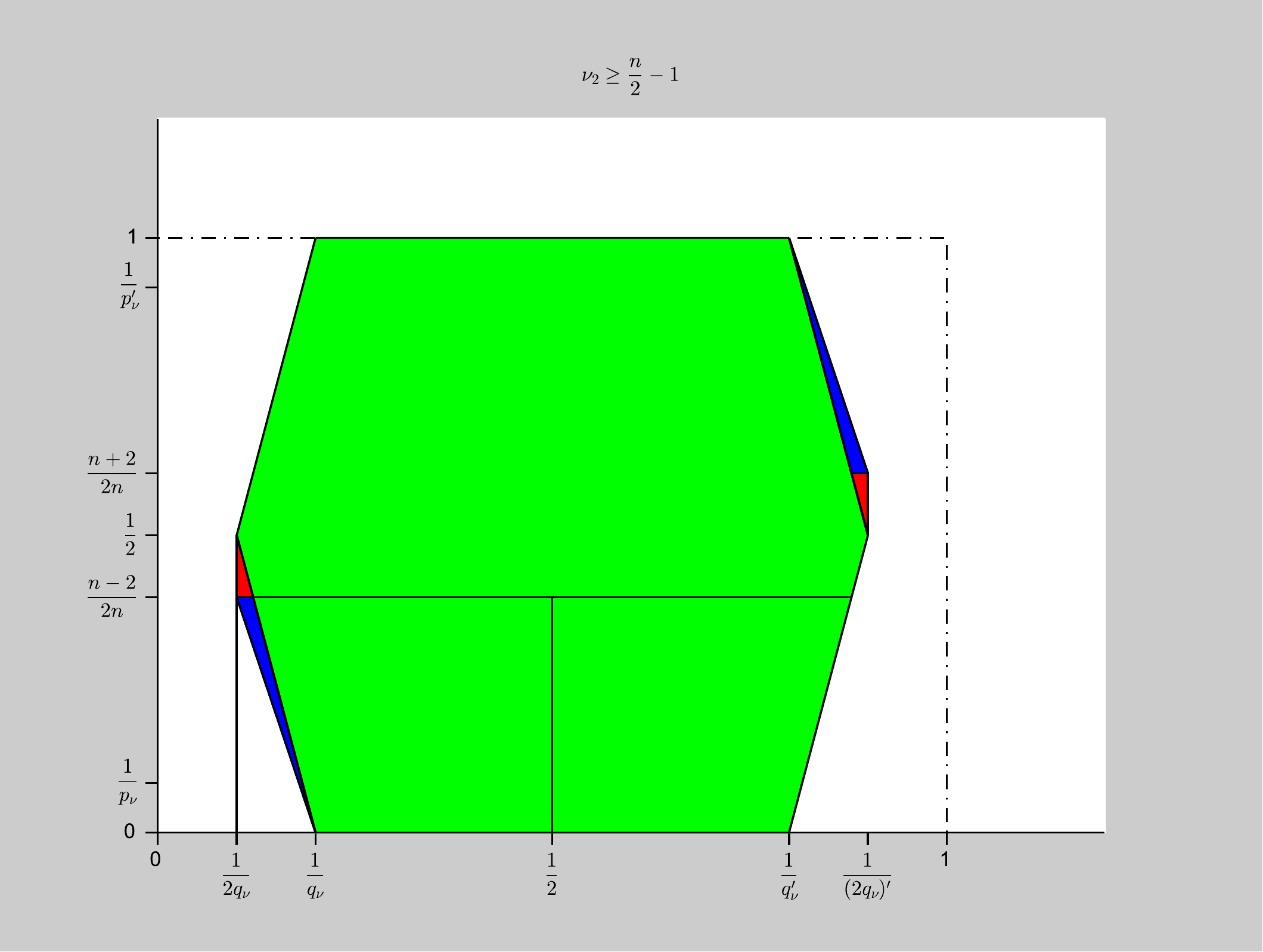}
	\caption{}
	\label{fig4}
\end{figure}

\vskip 2truemm

Figures \ref{fig1}, \ref{fig2}, \ref{fig3} and \ref{fig4} illustrate the regions of boundedness of the Bergman projector $P_\nu$ of $T_{\Lambda_n}.$

An application of Theorem 2.1 (the transference principle) then gives the following result (which improves \cite[Theorem 6.2.15]{Go} and \cite[Theorem 2.3]{Nana} for $D=D(\Gamma,\,F)$):

\begin{thm}
Let $\nu = (\nu_1, \nu_2)\in\mathbb{R}^2$ with $\nu_1>\frac 12$ and $\nu_2>1.$  The Bergman projector $P_\nu$ of the Pyateckii-Shapiro
domain $D(\Gamma,\,F)$ extends to a bounded operator on $L^{p, q}_\nu (D(\Gamma,\,F))$ for the following values of $p$ and $q:$
\begin{enumerate}
\item
$1\leq p \leq \infty$ and $\frac {2p_\sharp \nu_2}{2p_\sharp \nu_2 - 1}<q<2p_\sharp \nu_2;$
\item
$2\leq p \leq 6$ and $\frac {\nu_2}{\nu_2 -\frac 1{2p'}}<q<4\nu_2;$
\item
$p>6$ and $2<q<\frac {\nu_2}{\frac 3{2p'}-1};$

\item
the couples $(p, q)$ obtained by symmetry and complex interpolation from the previous couples.
\end{enumerate}
\end{thm}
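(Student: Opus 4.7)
The proof plan is to apply the transference principle (Theorem 2.1) to reduce the claim on the Pyateckii-Shapiro domain $D(\Gamma,F)$ to $L^{p,q}$ boundedness of a weighted Bergman projector on the tube over $\Gamma$, and then to invoke Theorem 2.3 on the tube domain. From Example 2.1 we read off the data $n=3$, $r=2$, $b=(0,1)$, $m=(1,0)$, $n^{\circ}=(0,1)$, $\tau=(3/2,3/2)$, so the hypothesis $\nu_j>(m_j+n_j+b_j)/2$ of Theorem 2.1 becomes exactly $\nu_1>1/2$ and $\nu_2>1$. Therefore Theorem 2.1 guarantees the boundedness of $P_\nu$ on $L^{p,q}_\nu(D(\Gamma,F))$ as soon as the tube-domain projector $\mathbb P_{\mu}$ with $\mu:=\nu-b/2=(\nu_1,\nu_2-\tfrac12)$ is bounded on $L^{p,q}_{\mu}(T_\Gamma)$.

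Next, the spherical cone $\Gamma\subset\mathbb R^3$ is linearly isomorphic to the Lorentz cone $\Lambda_3$, and such an isomorphism lifts to a biholomorphism $T_\Gamma\cong T_{\Lambda_3}$ that is affine in the real variables and preserves the weights $Q^\mu$ (up to a positive constant), so the boundedness of $\mathbb P_\mu$ on $L^{p,q}_\mu(T_\Gamma)$ is equivalent to the corresponding boundedness on $L^{p,q}_\mu(T_{\Lambda_3})$. This is precisely the situation treated by Theorem 2.3 with $n=3$ and weight vector $(\mu_1,\mu_2)=(\nu_1,\nu_2-\tfrac12)$. The hypotheses $\mu_1>n/2-1=1/2$ and $\mu_2>0$ hold automatically from $\nu_1>1/2$ and $\nu_2>1$, and in fact $\mu_2=\nu_2-\tfrac12>\tfrac12=n/2-1$, placing us in the cases where the more favorable alternatives (2), (3), (5) of Theorem 2.3 are available.

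The remaining work is purely an identification of parameters: plugging $n=3$ and $\mu_2=\nu_2-\tfrac12$ into the definitions preceding Theorem 2.3 yields $q_\mu=1+\frac{\mu_2}{n/2-1}=2\nu_2$, hence $q_\mu(p)=p_\sharp q_\mu=2p_\sharp\nu_2$ and $q'_\mu(p)=\frac{2p_\sharp\nu_2}{2p_\sharp\nu_2-1}$; also $\tilde q_{\mu,p}=\frac{\mu_2+n/2-1}{(n/(2p')-1)_+}=\frac{\nu_2}{\frac{3}{2p'}-1}$ when $p>6$; and, since $\mu_2>n/2-1$, $p_\mu=\infty$ so the requirement $p<p_\mu$ in case (3) is vacuous. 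Feeding these into Theorem 2.3 matches case (2) of that theorem with item (1) of Theorem 2.4, case (3) with item (2) (after rewriting $\frac{2p'\nu_2}{2p'\nu_2-1}=\frac{\nu_2}{\nu_2-1/(2p')}$), and case (5) with item (3); item (4) then follows by symmetry $(p,q)\leftrightarrow(p',q')$ and complex interpolation, exactly as in case (6) of Theorem 2.3.

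The argument carries no real obstacle beyond the bookkeeping above; the only point that requires a line of verification is the isomorphism $T_\Gamma\cong T_{\Lambda_3}$ together with the observation that under this change of variables the fundamental functions $Q_1,Q_2$ match (up to a multiplicative constant absorbed into the weight normalization) the corresponding Gindikin-type quantities used in Theorem 2.3 for $\Lambda_3$, so that $L^{p,q}_\mu$ spaces on the two tubes are isometric up to an equivalent norm.
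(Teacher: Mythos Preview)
Your proposal is correct and follows exactly the approach the paper takes: the paper's own proof is the single sentence ``the proof of Theorem 2.4 is just a combination of Theorem 2.3 for $n=3$ and Theorem 2.1 for the Pyateckii-Shapiro domain,'' together with the earlier remark that the spherical cone $\Gamma$ is isomorphic to the Lorentz cone $\Lambda_3$. Your parameter identifications (in particular $\mu=\nu-b/2=(\nu_1,\nu_2-\tfrac12)$, $q_\mu=2\nu_2$, $p_\mu=\infty$ since $\mu_2>\tfrac12=n/2-1$, and the matching of cases (2), (3), (5), (6) of Theorem 2.3 with items (1)--(4) of Theorem 2.4) are exactly the bookkeeping the paper leaves implicit.
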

The couples $(p, q)$ described in the previous theorem are represented in the figure depicted below.

\vskip 2truemm

\begin{figure}[tbph]
	\centering
	\includegraphics[width=0.7\linewidth]{./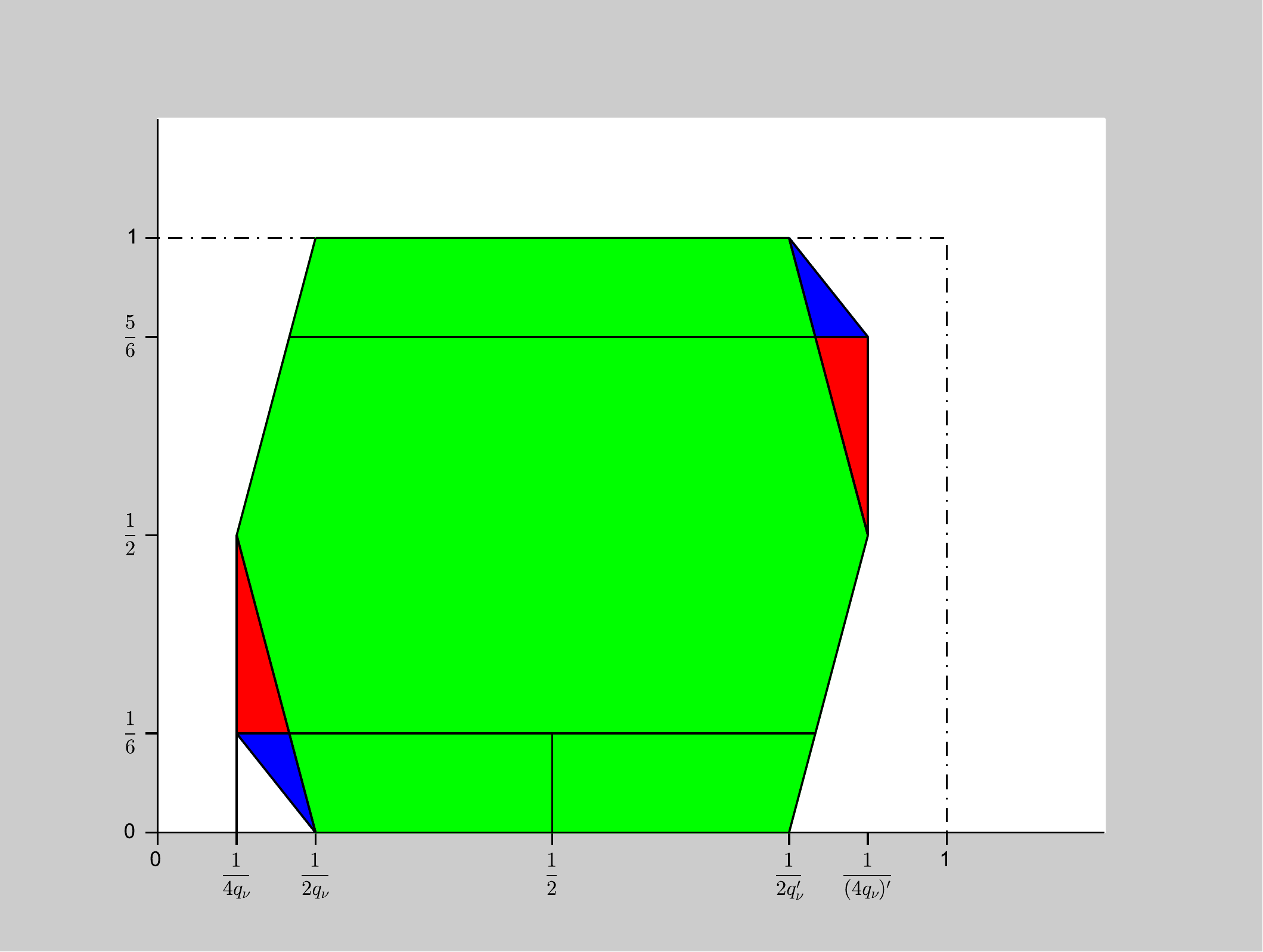}
	\caption{}
	\label{fig5}
\end{figure}

\vskip 2truemm

Further improvements will follow for homogeneous Siegel domains of type II associated to symmetric cones if 
one could solve the conjecture
stated in \cite{BBGRS} for tube domains over symmetric cones.

The plan of the sequel of the present paper is as follows. In section 3, we review the analysis on homogeneous
cones and on homogeneous Siegel domains of type II. Most of the results in this section are taken from \cite{Nana}.
In section 4, we restrict to tube domains over homogeneous cones and we introduce the action of the Box operator,
generalizing results from \cite{BBPR} for Lorentz cones. For these domains, we exhibit a necessary and sufficient
condition for the boundedness of the Bergman projector in terms of a Hardy type inequality for the Box operator. In section 5,
we prove the Hardy type inequality for homogeneous Siegel domains of type II and we apply it to prove the main result, namely Theorem 2.1.
 The proofs of Theorem 2.3 and Theorem 2.4 are given in section 6.

\section{Analysis on homogeneous cones and in homogeneous Siegel domains of type II}
Let $n\geq 3$ and $D$ be a homogeneous Siegel domain of type II associated to the homogeneous cone $\Omega$ and the $\Omega$-Hermitian form $F.$

\subsection{Basic results on homogeneous cones and in homogeneous Siegel domains of type II}
In this section, we recall the following results whose proofs are essentially in \cite{NT}.

\begin{lem}
\label{integ}\cite[Corollary 4.16]{NT} Let $\nu=(\nu_1,\ldots,\nu_r)\in\R^r$ such that $\nu_j>\frac{m_j}{2},\quad j=1,\ldots,r.$ Then
$$\int_\O e^{-(\xi|y)}Q^{\nu-\tau}(y)dy=\Ga_{\O}(\nu)(Q^*)^{-\nu}(\xi),\,\,\,\xi\in\O^*,$$
where $\Ga_\O(\nu)$ denotes the gamma integral \cite{NT} in the cone $\Omega.$
\end{lem}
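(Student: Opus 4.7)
The plan is to reduce the integral to the base point $\xi = \mathbf{e}$ by exploiting the simply transitive action of $H'$ on $\Omega^*$, and to define $\Gamma_\Omega(\nu)$ as the value of the integral at that base point. Given $\xi \in \Omega^*$, I write $\xi = t^\star t = \pi(t^\star)[\mathbf{e}]$ for the unique $t \in H$ furnished by (\ref{starsimplytransitive}).

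The first step is to unwind the exponential pairing. Since $y \in V$ is Hermitian, the $T$-algebra axioms give $\pi(t)[y] = t\, y\, t^\star$, so cyclicity of the trace yields
\begin{equation*}
(\xi | y) \;=\; \operatorname{tr}(t^\star t\, y) \;=\; \operatorname{tr}(t\, y\, t^\star) \;=\; (\mathbf{e} \,|\, \pi(t)[y]).
\end{equation*}
I then change variables $y' = \pi(t)[y]$. Iterating (\ref{Qj}) with $t$ and $t^{-1}$ produces the multiplicative rule $Q^{\nu-\tau}(\pi(t^{-1})[y']) = Q^{-(\nu-\tau)}(t \cdot \mathbf{e})\, Q^{\nu-\tau}(y')$, while the Jacobian formula $dy' = \det \pi(t)\, dy = Q^{\tau}(t \cdot \mathbf{e})\, dy$ supplies the remaining power. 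Combining these transforms the integral into
\begin{equation*}
\int_\Omega e^{-(\xi|y)} Q^{\nu-\tau}(y)\, dy \;=\; Q^{-\nu}(t \cdot \mathbf{e}) \int_\Omega e^{-\operatorname{tr}(y')} Q^{\nu-\tau}(y')\, dy'.
\end{equation*}
Applying (\ref{star}) to identify $Q^{-\nu}(t \cdot \mathbf{e}) = (Q^*)^{-\nu}(t^\star \cdot \mathbf{e}) = (Q^*)^{-\nu}(\xi)$ and setting $\Gamma_\Omega(\nu)$ equal to the remaining integral gives the claimed identity, provided the latter is finite.

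It then remains to verify that $\Gamma_\Omega(\nu) < \infty$ under the hypothesis $\nu_j > m_j/2$. I would parametrize $\Omega$ by $y = s \cdot \mathbf{e}$ with $s \in H$ and unfold the Lebesgue measure on $V$ block by block in terms of Lebesgue measures on the $\mathcal{U}_{ij}$, $i \leq j$. The off-diagonal contribution $\operatorname{tr}(s s^\star) - \sum_j \rho(s_{jj})^2$ is a sum of quadratic forms on the blocks $\mathcal{U}_{ij}$ ($i<j$), each weighted by $\rho(s_{jj})^2$; Gaussian integration over the block $\mathcal{U}_{ij}$, whose real dimension is $n_{ij}$, produces a factor $\rho(s_{jj})^{-n_{ij}}$. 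Collecting these across $i < j$ reduces the integral to a product of one-dimensional $\Gamma$-integrals in the variables $\rho(s_{jj})$ whose exponents have been shifted by exactly $-m_j/2 = -\tfrac{1}{2}\sum_{k>j} n_{jk}$, and these converge precisely when $\nu_j > m_j/2$. The transformation argument itself is a direct application of (\ref{Qj}) and (\ref{star}); the main technical hurdle is this convergence bookkeeping, where one must carefully unfold the Jacobian of the $H$-parametrization of $\Omega$ to see the shift by $m_j/2$ emerge from the off-diagonal Gaussian integrations.
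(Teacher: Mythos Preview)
The paper does not prove this lemma; it is quoted from \cite[Corollary~4.16]{NT} without argument. Your outline is the standard proof, and is essentially what \cite{NT} does: reduce to $\xi=\mathbf e$ via the simply transitive action of $H'$ on $\Omega^*$, then verify finiteness at the base point by a Gaussian--Gamma factorization over the blocks $\mathcal U_{ij}$.

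Two small points of care. The step $\operatorname{tr}(t^\star t\,y)=\operatorname{tr}(t\,y\,t^\star)$ invokes cyclicity of the trace, which is not automatic since $T$-algebras are not associative in general; the robust substitute is the adjoint relation $(\pi(t)[x]\mid y)=(x\mid\pi(t^\star)[y])$ on $V$, a standard $T$-algebra fact (see \cite{V,NT}), which gives $(\xi\mid y)=(\pi(t^\star)[\mathbf e]\mid y)=(\mathbf e\mid\pi(t)[y])$ directly. Second, in your convergence paragraph the indices are not quite consistent: if the Gaussian over $\mathcal U_{ij}$ with $i<j$ is weighted by $\rho(s_{jj})^2$ as you write, then summing the resulting powers over $i<j$ for fixed $j$ yields $n_j=\sum_{i<j}n_{ij}$, not $m_j=\sum_{k>j}n_{jk}$. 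You are right that this bookkeeping is the delicate part; once the Jacobian of the parametrization $s\mapsto ss^\star$ is unfolded with the correct attribution of each off-diagonal block to a diagonal entry, the threshold $\nu_j>m_j/2$ does emerge.
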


\begin{remark}
{\rm It is well-known that the fundamental rational functions $Q_j$ (resp. $Q^*_j)$ can extended as a zero-free analytic function $Q_j (\frac zi)$ (resp. $Q^*_j (\frac zi))$ on the tube domain $V+i\O$ (resp. $V'+i\O^*)$. In particular, it follows from Lemma \ref{integ} that
if $\zeta\in V'+i\O^*$ and $\nu_j>\frac{m_j}{2},\quad j=1,\ldots,r,$ we set}
 \Bea \int_\O\label{int}
e^{i(\zeta|y)}Q^{\nu-\tau}(y)dy=\label{cor1}\Ga_{\O}(\nu)(Q^*)^{-\nu}\left(\frac{\zeta}{i}\right).
\Eea
\end{remark}

\begin{lem} \label{13} \cite[Lemma 4.19]{NT} Let $\mu=(\mu_1,\mu_2,\ldots,\mu_r)\in\R^r$ and
$\la=(\la_1,\la_2,\ldots,\la_r)\in\R^r.$ For all $y\in \O,$ the integral
$$J_{\mu\la}(y)=\int_\O Q^\mu(y+v) Q^{\la-\tau}(v) dv$$
is finite if and only if \Beas \la_j>\frac{m_j}{2}, \quad
\mu_j+\la_j<-\frac{n_j}{2},\quad \quad j=1,\ldots,r. \Eeas In this
case, there is a positive constant $M_{\la\mu}$ such that
$$J_{\mu\la}(y)=M_{\la\mu}Q^{\mu+\la}(y).$$
\end{lem}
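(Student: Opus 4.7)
The plan is to split the proof into three logically independent steps: first reduce the problem to a single base point using the $H$-action, then analyze convergence of the reduced integral locally at the origin and at infinity, and finally identify the multiplicative constant via Laplace transform.

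First, I would exploit the fact that $H$ acts simply transitively on $\Omega$. Writing $y = t\cdot\mathbf{e}$ with $t\in H$ and performing the substitution $v=\pi(t)[w]$, we have $dv = \det\pi(t)\, dw = Q^\tau(y)\, dw$, while the multiplicative property (\ref{Qj}) yields $Q_j(y+v) = Q_j(\pi(t)[\mathbf{e}+w]) = Q_j(y)\,Q_j(\mathbf{e}+w)$ and $Q_j(v)=Q_j(y)\,Q_j(w)$. Taking products over $j$ and collecting all powers of $Q_j(y)$, the factor $Q^{\mu}(y)\cdot Q^{\lambda-\tau}(y)\cdot Q^\tau(y) = Q^{\mu+\lambda}(y)$ comes out of the integral, leaving
\[
J_{\mu\lambda}(y) = Q^{\mu+\lambda}(y)\, J_{\mu\lambda}(\mathbf{e}).
\]
This simultaneously explains the claimed form of the answer and reduces both the convergence question and the identification of $M_{\lambda\mu}$ to the single integral $J_{\mu\lambda}(\mathbf{e})$.

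Next, I would establish that $J_{\mu\lambda}(\mathbf{e})$ is finite precisely under the stated conditions by splitting $\Omega$ into a neighborhood of the origin and its complement. Near $v=0$, the factor $Q^\mu(\mathbf{e}+v)$ is bounded, so finiteness is governed by the integrability of $Q^{\lambda-\tau}(v)$ at the origin. Parametrizing $v = s\cdot\mathbf{e}$ by $s\in H$ and writing out the Jacobian in Iwasawa-type coordinates (which introduces the weights encoded by $\tau$ and the $n_{ij}$), the local finiteness condition becomes exactly $\lambda_j > m_j/2$ for all $j$. At infinity, for large $v\in\Omega$ one has $Q_j(\mathbf{e}+v) \asymp Q_j(v)$, so the integrand is comparable to $Q^{\mu+\lambda-\tau}(v)$; the same $H$-parametrization then shows that integrability in this region holds exactly when $\mu_j+\lambda_j < -n_j/2$ for all $j$. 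Conversely, violating either inequality produces a divergent one-dimensional slice along the appropriate direction in $H$, yielding necessity.

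Finally, to compute $M_{\lambda\mu} = J_{\mu\lambda}(\mathbf{e})$ explicitly, I would use Lemma \ref{integ} and its dual (for the cone $\Omega^*$) to represent $Q^\mu(\mathbf{e}+v)$ as a Laplace-type integral of $(Q^*)^{-\mu-\tau^\ast}$ over $\Omega^\ast$ (with $\tau^*$ the analogue of $\tau$ for $\Omega^*$), then apply Fubini and evaluate the resulting $v$-integral via Lemma \ref{integ} again, obtaining a product of $\Gamma_\Omega$ factors. The main obstacle I anticipate is the bookkeeping of indices in the $H$-parametrization: the Jacobian of $s\mapsto s\cdot\mathbf{e}$ produces weights involving both $\tau$ and the off-diagonal dimensions $n_{ij}$, and one must check that when these are paired with the powers coming from $Q^{\lambda-\tau}$ and $Q^\mu(\mathbf{e}+v)$, the local exponents collapse to the clean condition $\lambda_j>m_j/2$ and the global exponents to $\mu_j+\lambda_j<-n_j/2$. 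Once this indexing is handled correctly, the sufficiency direction and the formula for $M_{\lambda\mu}$ follow directly from the Laplace transform calculation.
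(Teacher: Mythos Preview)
The paper does not actually prove this lemma; it is quoted verbatim from \cite[Lemma 4.19]{NT} with no argument given, so there is nothing in the paper to compare your proof against.

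Your strategy is the standard one and is correct. The homogeneity reduction in your first step is exactly right and is the key structural observation. For the remaining two steps, note that they can in fact be merged: since the integrand $Q^\mu(\mathbf e+v)Q^{\lambda-\tau}(v)$ is nonnegative, once you write $Q^{\mu}(\mathbf e+v)$ via the dual Laplace representation (valid because the stated hypotheses force each $-\mu_j$ to be large), Fubini--Tonelli applies unconditionally and the two inner integrals are evaluated by Lemma~\ref{integ} and its $\Omega^*$-analogue; this simultaneously yields sufficiency and the explicit constant $M_{\lambda\mu}=\Gamma_\Omega(\lambda)\Gamma_{\Omega^*}(-\mu-\lambda)/\Gamma_{\Omega^*}(-\mu)$. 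The only place that genuinely needs a separate argument is necessity, and there your plan is fine: near $v=0$ the factor $Q^\mu(\mathbf e+v)$ is bounded away from $0$, so divergence of $\int_{\{|v|<1\}}Q^{\lambda-\tau}(v)\,dv$ when some $\lambda_j\le m_j/2$ forces $J_{\mu\lambda}(\mathbf e)=\infty$; at infinity the estimate $Q_j(\mathbf e+v)\asymp Q_j(v)$ is delicate as stated (the cone has many ``directions at infinity''), but it becomes routine once you use the $H$-parametrization $v=s\cdot\mathbf e$ and restrict to a ray where only the $j$-th diagonal entry grows, producing a divergent one-dimensional integral exactly when $\mu_j+\lambda_j\ge -n_j/2$.
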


\begin{lem} \label{14} \cite[Lemma 4.20]{NT} Let  $\a=(\a_1,\a_2,\ldots,\a_r)\in \R^r.$
\hskip 0.5cm
The integral \Bea\label{Ja} J_\a(y)=\int_V
\left|Q^{-\a}\left(\frac{x+iy}{i}\right)\right|dx\quad \quad
(y\in\O) \Eea converges if and only if
$\a_j>1+n_j+\frac{m_j}{2},\,\,j=1,\ldots,r.$ In this case, there is a positive constant $c_\a$ such that \Beas
J_\a(y)=c_\a Q^{-\a+\tau}(y). \Eeas

\end{lem}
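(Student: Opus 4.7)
The plan is to use the simply transitive action of $H$ on $\Omega$ to reduce the convergence question to the single base point $y=\mathbf e$, and then to evaluate the resulting constant by iterated integration matched to the $T$-algebra grading of $V$.

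\emph{Step 1 (Homogeneity reduction).} Given $y\in\Omega$, let $t\in H$ be the unique element with $y=\pi(t)[\mathbf e]$. The linear map $\pi(t)$ extends $\mathbb C$-linearly to $V^{\mathbb C}$, so under $x=\pi(t)x'$ we have $(x+iy)/i=\pi(t)\bigl[(x'+i\mathbf e)/i\bigr]$ with Jacobian $\det\pi(t)=Q^\tau(y)$. The holomorphic extension of (\ref{Qj}) yields
$$
Q_j\!\left(\tfrac{x+iy}{i}\right)=Q_j(y)\,Q_j\!\left(\tfrac{x'+i\mathbf e}{i}\right),\qquad j=1,\dots,r,
$$
and since $Q_j(y)>0$ one obtains
$$
J_\alpha(y)=Q^{-\alpha+\tau}(y)\,c_\alpha,\qquad c_\alpha:=J_\alpha(\mathbf e).
$$
It therefore suffices to determine the exact range of $\alpha$ for which $c_\alpha$ is finite; this constant will automatically be the constant of proportionality asserted in the lemma.

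\emph{Step 2 (Iterated integration at $\mathbf e$).} Decompose $V$ along the $T$-algebra grading, so that an element $x\in V$ is parametrized by $r$ diagonal entries $x_{jj}\in\mathbb R$ and off-diagonal entries $x_{ij}\in\mathcal U_{ij}$ for $i<j$ (of real dimension $n_{ij}$); then $dx$ is a product of Lebesgue measures. By Vinberg's recursive formulas for the fundamental rational functions \cite{V}, each $Q_j((x+i\mathbf e)/i)$ is a polynomial depending only on the entries $x_{kl}$ with $k\le l\le j$, and the dependence on $x_{jj}$ is affine. Integrating in an order matching the triangular structure of $\mathcal T$---say first in $x_{rr}$, then in $x_{r-1,r},\ldots,x_{1,r}$, then in $x_{r-1,r-1}$, and so on, working outward-in---reduces each stage to a one-dimensional (or Gaussian) integral of beta type, which converges exactly when the corresponding exponent is large enough. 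The convergence criterion at the $j$-th stage turns out to be $\alpha_j>1+n_j+\frac{m_j}{2}$, and the explicit product of these beta integrals is the constant $c_\alpha$.

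The main technical obstacle is the bookkeeping in Step 2: verifying that the accumulated exponent at the $j$-th stage is precisely $\alpha_j-1-n_j-\frac{m_j}{2}$. Heuristically, the ``$+1$'' comes from the diagonal entry $x_{jj}$, the ``$+n_j$'' from the $n_j$ real coordinates of the column $x_{1j},\ldots,x_{j-1,j}$ introduced at step $j$, and the ``$+m_j/2$'' from the cumulative contributions of the $m_j$ entries $x_{j,j+1},\ldots,x_{j,r}$ already integrated out in earlier stages. Once this tracking of Jacobians and exponents under the triangular change of variables is carried out, both directions of the equivalence and the positivity of $c_\alpha$ drop out simultaneously from the one-dimensional convergence criteria, and $J_\alpha(y)=c_\alpha Q^{-\alpha+\tau}(y)$ follows from Step 1.
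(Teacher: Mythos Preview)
The paper does not prove this lemma; it is quoted from \cite[Lemma 4.20]{NT} and stated without argument (the opening of Section 3 says these results have ``proofs essentially in \cite{NT}''). So there is no in-paper proof to compare your proposal against.

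Your approach is the standard one and is sound. Step 1 is fully rigorous: the holomorphic extension of (\ref{Qj}) together with $\det\pi(t)=Q^\tau(y)$ gives $J_\alpha(y)=Q^{-\alpha+\tau}(y)\,J_\alpha(\mathbf e)$ immediately, reducing both the convergence question and the evaluation to the single number $c_\alpha=J_\alpha(\mathbf e)$. Step 2 is the right plan---iterated integration along the triangular grading of the $T$-algebra is exactly how such integrals are handled in \cite{NT} and \cite{G}---but as written it remains a sketch rather than a proof. Your heuristic attribution of the contributions $1$, $n_j$, and $m_j/2$ to the diagonal, same-column, and same-row variables is correct in outcome, but making it rigorous requires the explicit recursive description of $Q_j$ in terms of lower-rank data (Vinberg's inductive formulas, or equivalently Iwasawa-type coordinates on $H$); you invoke these formulas but do not actually carry out the reduction. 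Once those recursions are written down, each stage is indeed a one-dimensional or Euclidean beta-type integral and the thresholds $\alpha_j>1+n_j+\tfrac{m_j}{2}$ drop out exactly as you claim. If you want a template for filling in the bookkeeping, the proof of the companion gamma integral (Lemma \ref{integ} here, \cite[Corollary 4.16]{NT}) proceeds by the same triangular induction.
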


The following two results were stated in \cite{Nana} .

\begin{cor}
$A^{p, q}_\nu (D)$ is a Banach space.
\end{cor}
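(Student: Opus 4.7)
The plan is to reduce the claim to the standard fact that $L^{p,q}_\nu(D)$ is itself a Banach space (this follows from the usual proof for weighted Lebesgue spaces applied to the base measure $Q^{\nu-\tau-\frac b2}(y-F(u,u))\,dx\,dy\,dv(u)$ on $D$). Once this is granted, it suffices to verify that $A^{p,q}_\nu(D)$ is a \emph{closed} subspace of $L^{p,q}_\nu(D)$, and closedness of a space of holomorphic functions is the standard consequence of a local sup bound by the norm.

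More precisely, I would first establish a pointwise estimate of the following form: for every compact set $K\subset D$ there exists a constant $C_K>0$ such that
\begin{equation*}
\sup_{(z,u)\in K}|f(z,u)|\ \leq\ C_K\,\|f\|_{L^{p,q}_\nu(D)}\qquad\text{for every holomorphic }f\in L^{p,q}_\nu(D).
\end{equation*}
To prove this, I would fix $(z_0,u_0)\in K$ and choose a polydisc $\Delta=\Delta_1\times\Delta_2\subset\mathbb C^n\times\mathbb C^m$ of fixed small polyradius centered at $(z_0,u_0)$ whose closure lies in $D$ (possible by compactness of $K$, with radius uniform in $(z_0,u_0)\in K$). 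Applying the mean value property to $|f|^p$ and then integrating in the appropriate order, one obtains
\begin{equation*}
|f(z_0,u_0)|^p\ \lesssim\ \int_{\Delta_2}\int_{\Delta_1}|f(z,u)|^p\,dx\,dy\,dv(u).
\end{equation*}
Since the weight $Q^{\nu-\tau-\frac b2}(y-F(u,u))$ is continuous and strictly positive on a neighborhood of $K$ in $D$, it is bounded above and below by positive constants on a neighborhood containing every such polydisc, so one can insert the weight into the integral at the cost of a constant depending only on $K$; then H\"older's inequality in the $u$ and $y$ variables converts the resulting $L^p$-integral into the mixed norm $\|f\|_{L^{p,q}_\nu(D)}$.

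With the local sup bound in hand, closedness follows in the routine way: if $\{f_k\}\subset A^{p,q}_\nu(D)$ is Cauchy in $L^{p,q}_\nu(D)$, it converges to some $f\in L^{p,q}_\nu(D)$, and the local sup estimate applied to the differences $f_k-f_l$ shows that $\{f_k\}$ is Cauchy in the topology of uniform convergence on compact subsets of $D$; the limit is therefore holomorphic on $D$ and agrees a.e.\ with $f$, so $f\in A^{p,q}_\nu(D)$. The main (and only) technical point is the uniform choice of the polydisc radius on $K$ and the uniform two-sided bound on the weight on a neighborhood of $K$, both of which follow from compactness and from the fact that $y-F(u,u)\in\Omega$ varies in a compact subset of the open cone $\Omega$ as $(z,u)$ varies in $K$.
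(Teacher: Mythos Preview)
The paper does not actually supply a proof of this corollary; it merely records it as a result ``stated in \cite{Nana}''. Your outline is the standard one (closedness via a local sup estimate coming from plurisubharmonicity, together with completeness of the ambient mixed-norm Lebesgue space), and it is essentially what one finds in the literature.

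There is, however, one genuine slip in your sketch. You apply the sub-mean-value inequality to $|f|^p$ and arrive at
\[
|f(z_0,u_0)|^p\ \lesssim\ \int_{\Delta_{y,u}}\Bigl(\int_{\Delta_x}|f(x+iy,u)|^p\,dx\Bigr)\,dy\,dv(u)\ \le\ \int_{\Delta_{y,u}}\|f(\cdot+iy,u)\|_p^{\,p}\,dy\,dv(u),
\]
and you then say H\"older in $(y,u)$ converts this to the mixed norm. That step is only valid when $q\ge p$; for $q<p$ it would be the reverse H\"older inequality and fails. The fix is simple: apply the sub-mean-value inequality to $|f|$ (which is plurisubharmonic) rather than to $|f|^p$. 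Then H\"older in $x$ with exponent $p$ produces a factor $\|f(\cdot+iy,u)\|_p$, and a second application of H\"older in $(y,u)$ with exponent $q$ yields the $L^{p,q}_\nu$ norm, with all constants depending only on $K$ through the polydisc radius and the two-sided bound on the weight. This version covers all $1\le p\le\infty$, $1\le q<\infty$ uniformly. With that correction your argument is complete.
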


\begin{lem}\label{density}
Assume that $\mu = (\mu_1,...,\mu_r)$  and $\nu = (\nu_1,...,\nu_r)$ belong to $\mathbb R^r$ and satisfy $\mu_j, \hskip 1truemm \nu_j > \frac {m_j + b_j}2, \hskip 2truemm j=1,...,r.$ The subspace $(A^{p, q}_\nu \cap A^{2}_\mu)(D)$ is dense in $A^{p, q}_\nu (D).$
\end{lem}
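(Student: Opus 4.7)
Given $f\in A^{p,q}_\nu(D)$, the plan is to construct for each $\epsilon>0$ a function $f_\epsilon\in A^{p,q}_\nu(D)\cap A^2_\mu(D)$ with $f_\epsilon\to f$ in the $A^{p,q}_\nu$-norm. The approximants are obtained by translating into the interior of the cone and inserting an exponentially decaying holomorphic factor. For $\epsilon>0$, set
$$
f_\epsilon(z,u):=e^{i\epsilon(\mathbf{e}\mid z)}f(z+i\epsilon\mathbf{e},u),\qquad (z,u)\in D,
$$
where $\mathbf{e}$ is the unit of $V$, which lies in $\Omega\cap\Omega^*$ (since $\mathbf{e}=\mathbf{e}\mathbf{e}^\star=\mathbf{e}^\star\mathbf{e}$). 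Because $\epsilon\mathbf{e}\in\Omega$, the shift maps $D$ into itself, so $f_\epsilon$ is holomorphic on $D$; moreover $|e^{i\epsilon(\mathbf{e}\mid z)}|=e^{-\epsilon(\mathbf{e}\mid y)}\leq 1$ gives exponential decay as $(\mathbf{e}\mid y)\to\infty$.

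For the $A^{p,q}_\nu$-convergence, $f_\epsilon\to f$ pointwise on $D$ by continuity. A uniform dominant comes from combining $|f_\epsilon|\leq |f(\cdot+i\epsilon\mathbf{e},\cdot)|$ with the mean value property on polydiscs of size comparable to $\epsilon$ inside $D$, which controls the translated function by a non-tangential maximal version of $|f|$ that is itself bounded in $L^{p,q}_\nu$. Dominated convergence then gives $\|f_\epsilon-f\|_{A^{p,q}_\nu}\to 0$ as $\epsilon\to 0^+$.

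For the $A^2_\mu$-membership, the substitution $t=y-F(u,u)\in\Omega$ turns the squared norm into
$$
\int_{\mathbb{C}^m}e^{-2\epsilon(\mathbf{e}\mid F(u,u))}\int_\Omega e^{-2\epsilon(\mathbf{e}\mid t)}\left(\int_V|f(x+i(t+F(u,u)+\epsilon\mathbf{e}),u)|^2\,dx\right)Q^{\mu-\frac{b}{2}-\tau}(t)\,dt\,dv(u).
$$
The inner $x$-integral is estimated by combining the mean value property with the standard pointwise bound for $A^{p,q}_\nu$ functions, yielding a factor $Q^{-\alpha}(t+\epsilon\mathbf{e})\,\|f\|_{A^{p,q}_\nu}^2$ for an appropriate $\alpha\in\mathbb{R}^r$ (the translation ensures the altitude lives in $\epsilon\mathbf{e}+\Omega$ so that the bound comes with explicit $\epsilon$-dependence). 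The $t$-integral is then finite by Lemma \ref{integ}, because the hypothesis $\mu_j>\frac{m_j+b_j}{2}$ is exactly the condition required to evaluate $\int_\Omega e^{-(\xi\mid t)}Q^{\mu-\frac{b}{2}-\tau}(t)\,dt$ at $\xi=2\epsilon\mathbf{e}\in\Omega^*$. Finally, the $u$-integral converges because the decomposition (\ref{decomp}) makes $(\mathbf{e}\mid F(u,u))=|u|^2$, so that $e^{-2\epsilon(\mathbf{e}\mid F(u,u))}=e^{-2\epsilon|u|^2}$ supplies Gaussian decay in~$u$.

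The most delicate step is the $A^2_\mu$-membership: one must upgrade the $L^p$-in-$x$ information carried by $f\in A^{p,q}_\nu$ to the $L^2$-in-$x$ control needed to integrate against $Q^{\mu-\frac{b}{2}-\tau}$. This is where the interior translation and the exponential damping work together: the translation by $i\epsilon\mathbf{e}$ moves evaluation into the interior where the mean value property gives quantitative pointwise bounds with an explicit $\epsilon$-dependent loss, while the exponential $e^{i\epsilon(\mathbf{e}\mid z)}$ contributes the decay at infinity (both along the cone direction and, via the $\Omega$-Hermitian positivity of $F$, transversally in $u$) that is needed to apply the gamma integrals of Lemma \ref{integ} and the integrability lemmas \ref{13}--\ref{14}.
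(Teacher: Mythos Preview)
The paper does not prove this lemma; it merely records it as a result stated in \cite{Nana}, so there is no argument in the present paper to compare against. Your scheme, however, has a genuine gap in the $A^2_\mu$-membership step when $p>2$.

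The multiplicative factor $e^{i\epsilon(\mathbf e\mid z)}$ has modulus $e^{-\epsilon(\mathbf e\mid y)}$, which depends only on $y=\Im z$; it contributes no decay in the horizontal variable $x=\Re z$. Consequently the inner $x$-integral in your $A^2_\mu$-computation is, up to the harmless factor $e^{-2\epsilon(\mathbf e\mid y)}$, exactly $\int_V|f(x+iy_\epsilon,u)|^2\,dx$ with $y_\epsilon=t+F(u,u)+\epsilon\mathbf e$. You claim this is dominated by $Q^{-\alpha}(t+\epsilon\mathbf e)\|f\|_{A^{p,q}_\nu}^2$ via ``the mean value property and the standard pointwise bound'', but those tools only give an $L^\infty$-estimate uniform in $x$; combined with the available $L^p$-in-$x$ control this yields $L^s$ bounds for $s\in[p,\infty]$, never $L^2$, once $p>2$. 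The obstruction is already visible on the half-plane (the rank-one model): for $p>2$ and $\tfrac1p<\sigma<\tfrac12$ the function $f(z)=(z+i)^{-\sigma}$ lies in $A^{p,q}_\nu(\mathbb H)$ for suitably large $q$, yet $f(\cdot+iy)\notin L^2(\mathbb R)$ for any $y>0$, so your $f_\epsilon=e^{i\epsilon z}f(z+i\epsilon)$ belongs to no $A^2_\mu$. The same phenomenon persists in higher rank and in the type~II setting, so your approximants simply need not lie in $A^2_\mu$.

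What is missing is a mechanism that actually improves the $x$-integrability from $L^p$ to $L^2$. The standard route passes through the Laplace/Paley--Wiener representation (cf.\ Theorem~\ref{16} here and the analogous statements in \cite{NT,BBGNPR,Nana}): one realises each slice $f(\cdot+iy,u)$ as the inverse Fourier transform in $x$ of a distribution supported in $\overline{\Omega^*}$, truncates that distribution by a smooth cutoff compactly supported in $\Omega^*$, and checks that the resulting holomorphic functions lie in every $A^2_\mu$ while converging back to $f$ in $A^{p,q}_\nu$. It is the compact spectral support that forces $L^2$-in-$x$; your exponential weight, acting only in the $y$ and $u$ directions, cannot do this job by itself.
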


Let $\nu=(\nu_1,\nu_2,\ldots,\nu_r)\in\R^r$
such that $\nu_j>\frac{m_j+b_j}{2},\,\,j=1,\ldots,r.$ Following \cite{BT}, we shall
denote $L_{(-\nu)}^2(\O^*\times \C^m)$ the Hilbert space of functions $g:\O^*\times\C^m\to\C$ such that:
\begin{itemize}
\item[i)] for all compact subset $K_1$ of $\C^n$ contained in $\O^*$ and for all compact subset $K_2$ of $\C^m,$ the mapping $u\mapsto g(\cdot,u)$ is holomorphic on $K_2$ with values in $L^2(K_1,-\nu),$ where $$L^2(K_1,-\nu)=\{f:K_1\to\C:\int_{K_1}|f(\xi)|^2(Q^*)^{-\nu+\frac{b}{2}}(\xi)d\xi<\infty\};$$
    \item[ii)] the function $g\in L^2(\O^*\times\C^m,\,(Q^*)^{-\nu+\frac{b}{2}}(\xi) e^{-2(F(u,\,u)|\xi)}d\xi dv(u)).$
\end{itemize}
We then
define by
$$\mathcal L g(z,\,u)=(2\pi)^{-\frac{n}{2}}\int_{\O^*}e^{i(z|\xi)}g(\xi,\,u)d\xi$$
the "Laplace transform" of any function $g \in L_{(-\nu)}^2(\O\times\C^m).$ Now, we
recall the Plancherel-Gindikin result found in \cite[Theorem II.2]{BT} which is a generalization of the Paley-Wiener Theorem \cite[Theorem 5.1]{NT}.

\begin{thm}\label{16}
Let  $\nu=(\nu_1,\nu_2,\ldots,\nu_r)\in \R^r$ with
$\nu_j>\frac{m_j+b_j}{2},\,\,j=1,\ldots,r.$ A function $G$ belongs
to $A_\nu^2 (D)$ if and only if $G=\mathcal Lg$, with $g\in
L_{(-\nu)}^2(\O^*\times \C^m).$ Moreover there is a positive constant $e_{\nu,b}$ such that \Bea
 \|G\|_{A_\nu^2
(D)}=\label{PW1}e_{\nu,b}\|g\|_{L_{(-\nu)}^2(\O^*\times\C^m)}.\Eea
\end{thm}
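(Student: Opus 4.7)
The plan is to establish both directions via a partial Fourier transform in the $x$-variable, using Plancherel's theorem together with Lemma~\ref{integ} to convert the Bergman norm on $D$ into the weighted $L^2$-norm of $g$.

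For the sufficiency direction, assume $g\in L^2_{(-\nu)}(\O^*\times\C^m)$ and set $G=\mathcal L g$. Absolute convergence of the defining integral at every $(z,u)\in D$ follows from Cauchy--Schwarz: writing $z=x+iy$ with $y-F(u,u)\in\O$, the factor $|e^{i(z|\xi)}|=e^{-(y|\xi)}$ decays on $\O^*$ and pairs against the measure of $L^2_{(-\nu)}$ to give a finite integral with locally uniform bounds in $(z,u)$; holomorphy of $G$ in $(z,u)$ then follows by differentiation under the integral sign combined with condition (i) in the definition of $L^2_{(-\nu)}$. For the norm identity, Plancherel in $x$ at each slice $(y,u)$ with $y-F(u,u)\in\O$ gives
$$\int_V |G(x+iy,u)|^2\,dx=\int_{\O^*}e^{-2(y|\xi)}|g(\xi,u)|^2\,d\xi;$$
substituting $y=y'+F(u,u)$ with $y'\in\O$, swapping the order of integration by Fubini, and invoking Lemma~\ref{integ} on the inner $y'$-integral (the hypothesis $\nu_j-\frac{b_j}{2}>\frac{m_j}{2}$ being exactly what is needed) evaluates the latter as a multiple of $(Q^*)^{-\nu+\frac b2}(\xi)$, yielding $\|G\|_{A_\nu^2(D)}^2=e_{\nu,b}^2\|g\|_{L^2_{(-\nu)}}^2$.

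For the necessity direction, given $G\in A_\nu^2(D)$, I would reconstruct $g$ from the partial Fourier transform of $G$ in $x$. Fixing a base point $y_0\in\O$ and a neighborhood $K_2\subset\C^m$ of some $u_0$ small enough that $y:=y_0+F(u_0,u_0)$ satisfies $y-F(u,u)\in\O$ for all $u\in K_2$, I would define
$$g(\xi,u):=(2\pi)^{n/2}e^{(y|\xi)}\int_V G(x+iy,u)e^{-i(x|\xi)}\,dx.$$
A Plancherel--P\'olya argument exploiting holomorphy of $G$ in $z$, in the spirit of the tube Paley--Wiener theorem \cite[Theorem 5.1]{NT}, forces $g$ to be supported in $\O^*$ and independent of the choice of $y$, hence well-defined globally. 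Running the sufficiency computation in reverse yields condition (ii) with the claimed norm identity, and Fourier inversion recovers $G=\mathcal L g$.

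The main obstacle I anticipate is verifying the holomorphy condition (i) of the definition of $L^2_{(-\nu)}$, since the slicewise construction of $g$ is only measurable a priori. I would address this by noting that on the neighborhood $K_2$ fixed above, $u\mapsto G(\cdot+iy,u)$ is holomorphic with values in $L^2(V,dx)$ --- local subharmonicity bounds together with the global $A_\nu^2$ estimate supply this --- so its partial Fourier transform in $x$ is likewise holomorphic with values in $L^2(V,d\xi)$. Multiplication by the $u$-independent factor $e^{(y|\xi)}$, restricted to a compact $K_1\subset\O^*$, is a bounded linear map into $L^2(K_1,-\nu)$, transferring holomorphy of $u\mapsto g(\cdot,u)$ to the $L^2(K_1,-\nu)$-valued sense required by (i).
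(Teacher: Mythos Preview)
The paper does not supply its own proof of this theorem: it is stated as a recalled result and attributed to \cite[Theorem II.2]{BT} (with the tube-domain special case being \cite[Theorem 5.1]{NT}). There is therefore nothing in the paper to compare your argument against.

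That said, your proposal is the standard Paley--Wiener/Plancherel--Gindikin argument and is essentially the route taken in \cite{BT}. The sufficiency computation is correct as written: Plancherel in $x$, the shift $y=y'+F(u,u)$, Fubini, and Lemma~\ref{integ} with exponent $\nu-\frac b2$ (legitimate precisely under the hypothesis $\nu_j>\frac{m_j+b_j}{2}$) produce the weight $(Q^*)^{-\nu+\frac b2}(\xi)e^{-2(F(u,u)|\xi)}$ defining $L^2_{(-\nu)}$, and the constant $e_{\nu,b}$ drops out of the gamma factor and the power of $2$. One small caution in the Cauchy--Schwarz step: the second factor $\int_{\O^*}|g(\xi,u)|^2(Q^*)^{-\nu+\frac b2}(\xi)e^{-2(F(u,u)|\xi)}d\xi$ is a priori finite only for almost every $u$, so to get $G$ defined and holomorphic for \emph{every} $(z,u)\in D$ you should first define $G$ on the full-measure set where the integral converges and then invoke a removable-singularities argument (or, equivalently, approximate $g$ by functions with compact $\xi$-support and pass to the limit in $A_\nu^2$).

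For necessity, your outline is again the standard one and matches \cite{BT}: the key inputs are (a) that for each fixed $u$ the function $z\mapsto G(z+iF(u,u),u)$ lies in the tube-domain Bergman space $A^2_{\nu-\frac b2}(T_\O)$ for a.e.\ $u$, so the tube Paley--Wiener theorem forces the spectrum into $\O^*$ and gives independence of $y$; and (b) the holomorphy transfer you describe for condition (i), which is handled exactly as you say by viewing the partial Fourier transform as a bounded linear operator on an $L^2$-valued holomorphic family. Your treatment of (i) is the one point that requires care beyond the tube case, and your argument for it is sound.
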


\subsection{Integral operators associated to the Bergman projector}

Let $\mu,\a\in\R^r$ and $f$ be a bounded function with compact support on $D.$  We consider the integral operators:
\begin{equation}\label{operator}
 T_{\mu,\a}f(z,u)=Q^\a(\Im m\, z-F(u,u))\int_DB_{\mu+\a}((z,u),(w,t))f(w,t)dV_\mu(w,t)
\end{equation}
and
\begin{equation}
 T^+_{\mu,\a}f(z,u)=Q^\a(\Im m\, z-F(u,u))\int_D \left \vert B_{\mu+\a}((z,u),(w,t))\right \vert f(w,t)dV_\mu(w,t).
\end{equation}

\begin{thm}\label{essai}
Let $\a,\nu,\mu\in\R^r$ and $1\leq p,q\leq \infty.$ Then the operator $T^+_{\mu,\a}$ extends boundedly to $L^{p,q}_\nu(D)$ whenever for all $j=1,\ldots,r$
the parameters satisfy the following:
$$\mu_j+\a_j>\frac 12(m_j+n_j+b_j)$$ and
$$\mu_jq-\nu_j>(q-1)\left(\frac{m_j}{2}+\frac{b_j}{2}\right)+\frac{n_j}{2},\,\,\,\,\a_jq+\nu_j>\frac{m_j}{2}+\frac{b_j}{2}+(q-1)\frac{n_j}{2}.$$
\end{thm}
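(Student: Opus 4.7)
The plan is to prove the theorem by Schur's test, carried out in two stages so as to separate the integration in the horizontal direction from the integration over the cone and over the $\C^m$ factor.

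\emph{Stage 1: horizontal reduction.} Writing $z=x+iy$, $w=x'+iy'$ and setting $y''=y-F(u,u)\in\O$, $y'''=y'-F(t,t)\in\O$, I would observe that $|B_{\mu+\alpha}((z,u),(w,t))|$ depends on $x,x'$ only through the difference $x'-x$; by Lemma \ref{14} and the polarization identity
\begin{equation*}
\tfrac{y+y'}{2}-\text{Re}\,F(u,t)=\tfrac{1}{2}\bigl(y''+y'''+F(u-t,u-t)\bigr),
\end{equation*}
it satisfies $\int_V|B_{\mu+\alpha}((x+iy,u),(x'+iy',t))|\,dx' \leq C\,Q^{-(\mu+\alpha)-\frac{b}{2}}(y''+y'''+F(u-t,u-t))$. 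By Minkowski's integral inequality and Young's convolution inequality applied in $x'$, with $\tilde f(y''',t):=\|f(\,\cdot\,+iy'''+iF(t,t),t)\|_{L^p(V)}$, the desired bound $\|T^+_{\mu,\alpha}f\|_{L^{p,q}_\nu}\leq C\|f\|_{L^{p,q}_\nu}$ reduces to the $L^q(\O\times\C^m,\,Q^{\nu-\tau-\frac{b}{2}}(y)\,dy\,dv(u))$-boundedness of the positive integral operator
\begin{equation*}
S\tilde f(y,u) = Q^\alpha(y)\int_{\C^m}\int_\O Q^{-(\mu+\alpha)-\frac{b}{2}}\bigl(y+y'+F(u-t,u-t)\bigr)\,Q^{\mu-\tau-\frac{b}{2}}(y')\,\tilde f(y',t)\,dy'\,dv(t).
\end{equation*}

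\emph{Stage 2: Schur's test on $\O\times\C^m$.} I would apply Schur's test to $S$ with test functions of the form $h(y,u)=Q^a(y)$ for a vector $a\in\R^r$ to be chosen. The two Schur conditions then reduce to bounds of the form
\begin{equation*}
\int_{\C^m}\int_\O Q^{-(\mu+\alpha)-\frac{b}{2}}\bigl(y+y'+F(u-t,u-t)\bigr)\,Q^\gamma(y')\,dy'\,dv(t) \leq C\,Q^{\delta}(y),
\end{equation*}
together with its symmetric dual obtained by exchanging $(y,u)\leftrightarrow(y',t)$, where $\gamma,\delta$ are linear combinations of $a,\mu,\nu,\tau,b$ determined by $p,q$. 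The crucial step is the integration in $t$, which must eliminate the $F(u-t,u-t)$ shift. For this, I would first represent $Q^{-(\mu+\alpha)-\frac{b}{2}}$ as a Laplace transform over $\O^*$ via Lemma \ref{integ}, and then apply the Gaussian identity
\begin{equation*}
\int_{\C^m}e^{-(\xi|F(u-t,u-t))}\,dv(t) = c\,(Q^*)^{-b}(\xi),\qquad \xi\in\O^*,
\end{equation*}
a standard computation for $\O$-Hermitian forms rooted in the block decomposition \eqref{decomp} of the $\widetilde H_{ii}$. The resulting integrals over $\O^*$ and $\O$ are then cone integrals evaluated by the gamma integral (Lemma \ref{integ}) and Lemma \ref{13}, producing a final expression of the form $Q^\delta(y)$ as required.

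\emph{Parameter matching and main obstacle.} Tracking the exponents through the successive applications of Lemmas \ref{integ}, \ref{13}, \ref{14} and the Gaussian step, one finds that an admissible $a\in\R^r$ exists if and only if, componentwise,
\[
\mu_j q-\nu_j>(q-1)\tfrac{m_j+b_j}{2}+\tfrac{n_j}{2}\quad\text{and}\quad \alpha_j q+\nu_j>\tfrac{m_j+b_j}{2}+(q-1)\tfrac{n_j}{2};
\]
these are precisely the quantitative hypotheses of the theorem, whose sum recovers the first hypothesis $\mu_j+\alpha_j>\tfrac{m_j+n_j+b_j}{2}$ (needed to apply Lemma \ref{14} in Stage 1 and to secure convergence of the Laplace and Gaussian integrals in Stage 2). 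The main obstacle is precisely this simultaneous parameter matching: each of Lemma \ref{14}, the Laplace--Gaussian step, and each application of Lemma \ref{13} imposes its own lower bound on some component of $a$, and one must verify that the intersection of the resulting admissible ranges is nonempty exactly under the stated hypotheses.
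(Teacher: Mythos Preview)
Your proposal is correct and follows essentially the same approach as the paper: a horizontal reduction via Minkowski, Young, and Lemma~\ref{14} to an operator on $L^q_\nu(\Omega\times\C^m)$, followed by Schur's test with power-type test functions $Q^\gamma$, the $\C^m$-integration handled by what the paper quotes as Proposition~\ref{new} (your Laplace--Gaussian step is precisely how that proposition is proved), and the resulting cone integrals evaluated by Lemma~\ref{13}. Your parameter-matching analysis, including the observation that the two Schur inequalities sum to the first hypothesis, matches the paper's computation exactly.
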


\begin{proof}

We follow the scheme of \cite[Proof of Theorem 2.1]{Nana}. Let  $\mu=(\mu_1,\mu_2,\ldots,\mu_r)\in\R^r$ be such that $\mu_j>\frac{m_j+b_j}{2},\,\,j=1,\ldots,r.$

We shall denote $U=\{(t,u): u\in\C^m, t\in\O+F(u,u)\}.$ We define the measure $\mathcal V_\mu, \hskip 1truemm \mu \in \mathbb R^r,$ on $U$ by
$$d\mathcal V_\mu (t, u) = Q^{\mu - \frac b2 -\tau} (t-F(u, u))dtdv(u)$$
and we define
$L_\mu^q(U)$ as the space of all $g:U\to\C$ with norm given by
\Beas\|g\|_{L_\mu^q(U)}^q&=&\int_{\C^m}\int_{\O+F(u,u)}|g(t,u)|^qd\mathcal V_\mu(t,u)\\&=&\int_{\C^m}\int_{\O}|g(y+F(u,u),u)|^qQ^{\mu-\frac{b}{2}-\tau}(y)dydv(u).\Eeas

 We will need the following result.

\begin{prop}\cite[Proposition 5.2]{Nana}\label{new}
Let $u,s\in\C^m;\,\,y\in\O+F(u,u)$ and $t\in\O+F(s,s).$ For $\la=(\la_1,\la_2,\cdots,\la_r)\in\R^r,$
the integral $$I_\la(y,u,t)=\int_{\C^m}Q^{-\la}(y+t+F(s,s)-2\Re e\,F(u,s))dv(s)$$
converges if $\la_j-b_j>\frac{n_j}{2},\,j=1,\ldots,r.$ In this case, there is a positive constant $C_\la$ such that
\Bea\label{new1}
I_\la(y,u,t)=C_\la Q^{-\la+b}(y-F(u,u)+t).
\Eea
\end{prop}

Next, we shall use the following notation: for all $u,s\in\C^m,$ $$A=\Re e\,F(u,s).$$

Also we shall denote
$$f_{y,u}(x)=f(x+iy,u).$$

Thus, for $f\in L_\mu^{p,q}(D),$ using Minkowski's inequality for integrals, Young's inequality and Lemma \ref{14}, we get
\Beas
\|T^+_{\mu,\a}f\|_{L_\nu^{p,q}(D)}\leq C\|R_{\mu,\a}g\|_{L^q_\nu(U)}
\Eeas
where for $s\in\C^m$ and $t\in\O+F(s,s),$
$$g(t,s)=\|f_{t,s}\|_p$$
and $R_{\mu,\a}$ is the integral operator with positive kernel defined on $L^q_\nu(U)$ by
\begin{equation}
 \label{T_g}  R_{\mu,\a}g(y,\,u)=Q^\a(y-F(u,u))\int_{\C^m}\int_{\O+F(s,\,s)} Q^{-\mu-\a-\frac{b}{2}}(y-2A+t)g(t,s)d\mathcal V_\mu(t,s).
  \end{equation}

Observe that $R_{\mu,\a}$ is a self-adjoint operator if $\a=0$ and $\mu=\nu.$ 

To prove  Theorem \ref{essai}, it suffices therefore to prove the boundedness of the operator $R_{\mu,\a}$ on $L^q_\nu(U).$
\begin{thm}\label{T}
Let $\mu=(\mu_1,\ldots,\mu_r)\in \R^r$ and $\a=(\a_1,\ldots,\a_r)\in \R^r$ such that
$\mu_j+\a_j>\frac{m_j+n_j+b_j}{2},\,\,j=1,\ldots,r.$  The operator $R_{\mu,\a}$ is
bounded on $L_\nu^q(U)$ whenever
$$\mu_jq-\nu_j>(q-1)\left(\frac{m_j}{2}+\frac{b_j}{2}\right)+\frac{n_j}{2},\,\,\,\,\a_jq+\nu_j>\frac{m_j}{2}+\frac{b_j}{2}+(q-1)\frac{n_j}{2}.$$


\end{thm}

\begin{proof}

 We will use Schur's Lemma (See \cite{FR}). The kernel of the operator
$R_{\mu,\a}$ relative to the measure $dV_\nu(t,s)$ is given by
$$N(y,u;t,s)= Q^\a(y-F(u,u))Q^{-\mu-\a-\frac{b}{2}}(y-2A+t)Q^{\mu-\nu}(t-F(s, s))$$
and it is positive. By Schur's Lemma, it is sufficient to find a
positive and measurable function  $\varphi$ defined on $U$ such
that
\begin{eqnarray}
\label{q} \int_{\C^m}\int_{\O+F(u,\,u)} N(y,u;t,s) \varphi(y,\,u)^{q}d\mathcal V_\nu(y,u)=
C\varphi(t,\,s)^{q} \Eea and \Bea \label{q'} \int_{\C^m}\int_{\O+F(s,\,s)} N(y,u;t,s)
\varphi(t,\,s)^{q'}d\mathcal V_\nu(t,s)= C\varphi(y,\,u)^{q'}.
\end{eqnarray}

We take as test functions $ \varphi(t,\,s)=Q^{\gamma}(t-F(s,\,s))$ where
$\gamma=(\gamma_1,\ldots,\gamma_r)\in\R^r$ has to be determined. The left-hand side of (\ref{q}) equals
$$K(t,s)=Q^{\mu-\nu}(t-F(s,s))\int_\O I_{-\mu-\a-\frac{b}{2}}(t,s,y)Q^{\ga q+\nu+\a-\frac{b}{2}-\tau}(y)dy.$$
Using (\ref{new1}), we get
\begin{equation}
 \label{new2}
K(t,s)=CQ^{\mu-\nu}(t-F(s,s))\int_\O Q^{-\mu-\a+\frac{b}{2}}(y+t-F(s,s))Q^{\ga q+\nu+\a-\frac{b}{2}-\tau}(y)dy.
\end{equation}

An
application of Lemma \ref{13}  gives that (\ref{new2}) holds whenever
\begin{eqnarray*}
\frac{-\nu_j-\a_j+\frac{m_j}{2}+\frac{b_j}{2}}{q}<\gamma_j<\frac{\mu_j-\nu_j-\frac{n_j}{2}}{q},\quad
j=1,\ldots,r.
\end{eqnarray*}

Likewise, (\ref{q'}) holds when
\begin{eqnarray*}
\frac{-\mu_j+\frac{m_j}{2}+\frac{b_j}{2}}{q'}<\gamma_j<\frac{\a_j-\frac{n_j}{2}}{q'},\quad
j=1,\ldots,r.
\end{eqnarray*}
For these intervals to be non-empty, we need $\mu_j+\a_j>\frac{m_j+n_j+b_j}{2},\,\,j=1,\ldots,r.$

 The identities (\ref{q}) and  (\ref{q'}) are
simultaneously satisfied if each $\gamma_j,\,\,j=1,\ldots,r,$ satisfy
the following  condition
\begin{equation}
 \label{cond2} \gamma_j\in
\left]\frac{-\nu_j-\a_j+\frac{m_j}{2}+\frac{b_j}{2}}{q'}, \frac{\mu_j-\nu_j-\frac{n_j}{2}}{q'}\right[\bigcap\left]\frac{-\mu_j+\frac{m_j}{2}+\frac{b_j}{2}}{q},\frac{\a_j-\frac{n_j}{2}}{q}\right[.
\end{equation}
 The intersection in (\ref{cond2}) is not empty if
$\frac{-\nu_j-\a_j+\frac{m_j}{2}+\frac{b_j}{2}}{q'}<\frac{\a_j-\frac{n_j}{2}}{q}$ and
$\frac{-\mu_j+\frac{m_j}{2}+\frac{b_j}{2}}{q}<\frac{\mu_j-\nu_j-\frac{n_j}{2}}{q'};$ that is
for any $j=1,\ldots,r,$

$\a_jq+\nu_j>(q-1)\frac{n_j}{2}+\frac{m_j}{2}+\frac{b_j}{2}$ and $\mu_jq-\nu_j>\frac{n_j}{2}+(q-1)\left(\frac{m_j}{2}+\frac{b_j}{2}\right).$
\end{proof}
\end{proof}


\begin{cor}\label{P+}
Let $\mu=(\mu_1,\mu_2,\ldots,\mu_r)\in\R^r$ and $\nu=(\nu_1,\nu_2,\ldots,\nu_r)\in\R^r$ such that
$\mu_j>\frac{m_j+n_j+b_j}{2},\,\,\,j=1,\ldots,r$ and $\nu_j>\frac{m_j+b_j}{2},\,\,\,j=1,\ldots,r.$ Assume that $1\leq p,q\leq \infty.$ Then $P_\mu^+$ is bounded on $L^{p,q}_\nu(D)$ whenever
for all $j=1,\ldots,r,$ we have
$$\frac{\nu_j-\frac{m_j}{2}-\frac{b_j}{2}+\frac{n_j}{2}}{\mu_j-\frac{m_j}{2}-\frac{b_j}{2}}<q<1+\frac{\nu_j-\frac{m_j}{2}-\frac{b_j}{2}}{\frac{n_j}{2}}.$$
In this case, the Bergman projector $P_\mu$ extends to a bounded operator from $L^{p,q}_\nu(D)$ onto $A^{p,q}_\nu(D).$
\end{cor}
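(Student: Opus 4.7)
The plan is to obtain the boundedness of $P_\mu^+$ as the specialization $\alpha=0$ of Theorem \ref{essai}, and then to pass to $P_\mu$ itself by combining the pointwise domination $|P_\mu f|\le P_\mu^+|f|$ with the density result of Lemma \ref{density} and the $L^2$ theory of Theorem \ref{16}.

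First I would set $\alpha=0$ in (\ref{operator}). The weight $Q^{\alpha}(\Im m\,z-F(u,u))$ then collapses to $1$, the kernel $B_{\mu+\alpha}$ becomes $B_\mu$, and $T^+_{\mu,0}$ is exactly $P_\mu^+$. The condition $\mu_j+\alpha_j>\frac{m_j+n_j+b_j}{2}$ of Theorem \ref{essai} reduces to the hypothesis $\mu_j>\frac{m_j+n_j+b_j}{2}$. The two remaining constraints
$$\mu_jq-\nu_j>(q-1)\Bigl(\frac{m_j}{2}+\frac{b_j}{2}\Bigr)+\frac{n_j}{2},\qquad \nu_j>\frac{m_j}{2}+\frac{b_j}{2}+(q-1)\frac{n_j}{2}$$
have to be rearranged. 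The first is equivalent to $q\bigl(\mu_j-\frac{m_j}{2}-\frac{b_j}{2}\bigr)>\nu_j-\frac{m_j}{2}-\frac{b_j}{2}+\frac{n_j}{2}$, i.e. to the stated lower bound on $q$, while the second reads $(q-1)\frac{n_j}{2}<\nu_j-\frac{m_j}{2}-\frac{b_j}{2}$, i.e. the stated upper bound. Hence Theorem \ref{essai} delivers the boundedness of $P_\mu^+$ on $L^{p,q}_\nu(D)$ exactly in the announced range.

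Second, I would transfer this estimate to $P_\mu$. The pointwise bound $|P_\mu f(z,u)|\le P_\mu^+|f|(z,u)$ holds a priori on, say, bounded compactly supported $f$, and the preceding step extends $P_\mu$ continuously to all of $L^{p,q}_\nu(D)$ with the same operator norm. It then remains to show that this extension takes values in $A^{p,q}_\nu(D)$ and acts there as the identity. By Theorem \ref{16}, $P_\mu$ is the orthogonal projection of $L^2_\mu(D)$ onto $A^2_\mu(D)$, so it reproduces every element of $(A^{p,q}_\nu\cap A^2_\mu)(D)$; Lemma \ref{density} asserts that this intersection is dense in $A^{p,q}_\nu(D)$. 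Continuity of the bounded extension then forces $P_\mu$ to coincide with the identity on $A^{p,q}_\nu(D)$, which both yields holomorphy of $P_\mu f$ for every $f\in L^{p,q}_\nu(D)$ and shows that $P_\mu$ is surjective onto $A^{p,q}_\nu(D)$.

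The only actual calculation is the arithmetic rewriting of the Schur-type conditions of Theorem \ref{essai}, so I do not anticipate a serious obstacle; the conceptual content is packaged inside Theorem \ref{essai} and Lemma \ref{density}. The one point that requires a little care is guaranteeing that $P_\mu f$ is holomorphic for a general $f\in L^{p,q}_\nu(D)$, and this is precisely what the density/approximation argument handles: a sequence in $A^{p,q}_\nu\cap A^2_\mu$ converges in $L^{p,q}_\nu$, and the uniform bound on $P_\mu$ together with Fatou--type arguments transfers holomorphy to the limit.
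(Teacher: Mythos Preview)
Your proposal is correct and follows the paper's approach exactly: the paper's proof consists of the single line ``Just take $\alpha=0$ in Theorem \ref{essai},'' and your first paragraph carries out precisely this specialization with the arithmetic made explicit. Your second part, justifying that $P_\mu$ lands in $A^{p,q}_\nu(D)$ and is surjective, is not contained in the paper's proof of this corollary; the paper instead postpones that argument to the subsequent Proposition 3.13, but the reasoning you sketch (pointwise domination, density via Lemma \ref{density}, reproducing property on $A^2_\mu$) is the same as what is used there.
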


\begin{proof}

Just take $\a=0$ in Theorem \ref{essai}. The case $q=1$ is left as an exercise: also cf. \cite{BT}.
\end{proof}

\begin{remark}\label{3.12}
Let $k$ be a positive integer, let $\rho \in \mathbb R^r$ be such that $\rho_j >0$ for every $j=1,\cdots, r.$
Let $1\leq p \leq \infty$ and $2<q<\infty.$ In view of Corollary \ref{P+}, the operator $P^+_{\nu+k\rho}$
(and hence the Bergman projector $P_{\nu+k\rho}$) is bounded on $L^{p, q}_{\nu+kq\rho}$ for $\nu=(\nu_1,\nu_2,\ldots,\nu_r)\in\R^r$ such that
$\nu_j>\frac{m_j+n_j+b_j}{2},\,\,\,j=1,\ldots,r$ and $k$ large.
\end{remark}

\begin{prop}
Let $\nu=(\nu_1,\nu_2,\ldots,\nu_r)\in\R^r$ be such that
$\nu_j>\frac{m_j+b_j}{2},\,\,\,j=1,\ldots,r.$ Assume $\mu=(\mu_1,\mu_2,\ldots,\mu_r)\in\R^r$ and $p,q$ are such that $P_\mu$ extends as a bounded operator
in $L^{p,q}_\nu(D).$ Then
\begin{itemize}
\item[(i)]
the Bergman space $A^{p,q}_\nu(D)$ is the closed linear span of the set $\{B_\mu(\cdot,(w,t)),\,(w,t)\in D\}.$ In particular,
$B_\mu(\cdot,(w,t))\in L^{p,q}_\nu(D).$
\item[(ii)]
$P_\mu$ is the identity on $A^{p,q}_\nu(D);$ in particular, $P_\mu (L^{p,q}_\nu(D))=A^{p,q}_\nu(D).$
\end{itemize}
\end{prop}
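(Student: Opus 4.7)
The plan is to establish (ii) first, since (ii) is both simpler and a key ingredient in proving (i). For (ii), I invoke Lemma \ref{density}: the space $A^{p,q}_\nu \cap A^2_\mu$ is dense in $A^{p,q}_\nu$ (note that the admissibility $\mu_j > (m_j+b_j)/2$ is built into the hypothesis that $P_\mu$ is a well-defined bounded operator). For any $f$ in the intersection, the definition of $P_\mu$ as the orthogonal projector of $L^2_\mu$ onto $A^2_\mu$ gives $P_\mu f = f$. Since $P_\mu$ and the identity map are both continuous on $A^{p,q}_\nu$ (the former by hypothesis, the latter trivially) and agree on the dense subset, they agree everywhere on $A^{p,q}_\nu$. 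The equality $P_\mu(L^{p,q}_\nu) = A^{p,q}_\nu$ then follows by two-way inclusion: $P_\mu f$ is holomorphic and lies in $L^{p,q}_\nu$, so $P_\mu(L^{p,q}_\nu) \subseteq A^{p,q}_\nu$; conversely $A^{p,q}_\nu = P_\mu(A^{p,q}_\nu) \subseteq P_\mu(L^{p,q}_\nu)$.

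For (i), I first verify that $B_\mu(\cdot,(w,t)) \in A^{p,q}_\nu$ for each fixed $(w,t)\in D$. Since this function is holomorphic, the content is the $L^{p,q}_\nu$ estimate, which I would obtain by a direct kernel computation using the homogeneity of $Q$ (and the $H$--action) to reduce to a product of gamma-type integrals governed by Lemmas \ref{13} and \ref{14}. Once this is in hand, denote by $M$ the closed linear span of $\{B_\mu(\cdot,(w,t)):(w,t)\in D\}$ in $A^{p,q}_\nu$. By Hahn--Banach it suffices to show that any continuous linear functional $\ell$ on $A^{p,q}_\nu$ which vanishes on $M$ is identically zero. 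Extend $\ell$ by Hahn--Banach to $L^{p,q}_\nu$; by the duality of mixed Lebesgue spaces (standard for $1\le p,q<\infty$, with the usual adjustments at the endpoints) there is $h\in L^{p',q'}_\nu$ with $\ell(f) = \int_D f\,\bar h\,dV_\nu$. The hypothesis $\ell(B_\mu(\cdot,(w,t))) = 0$ together with the Hermitian symmetry $B_\mu((z,u),(w,t)) = \overline{B_\mu((w,t),(z,u))}$ yields
\begin{equation*}
\int_D B_\mu((w,t),(z,u))\,h(z,u)\,dV_\nu(z,u) = 0 \qquad \text{for every } (w,t)\in D.
\end{equation*}
For any $f \in A^{p,q}_\nu \cap A^2_\mu$, use (ii) to write $f = P_\mu f$ as an absolutely convergent integral (via Cauchy--Schwarz in $A^2_\mu$), substitute into $\ell(f)$, and exchange the order of integration; the inner integral is precisely the vanishing expression displayed above, so $\ell(f) = 0$. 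By continuity of $\ell$ and density (Lemma \ref{density}), $\ell$ vanishes on all of $A^{p,q}_\nu$.

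The main obstacle is justifying Fubini in the last step: the iterated integral is not a priori absolutely convergent, because $f\in L^{p,q}_\nu$, $h\in L^{p',q'}_\nu$, and $B_\mu$ is not globally integrable against either. I would handle this by first approximating $h$ by bounded, compactly supported $h_n \to h$ in $L^{p',q'}_\nu$. For each $h_n$ the absolute convergence of the double integral is trivial (the $A^2_\mu$ Cauchy--Schwarz bound on the reproducing kernel gives a pointwise estimate on the compact support of $h_n$), so Fubini applies to yield the rearranged expression. The vanishing of the inner integral against the fixed kernel $B_\mu(\cdot,(w,t))$ passes to the limit by continuity in $h$, and convergence $\ell_n(f)\to \ell(f) = 0$ follows from H\"older's inequality $|\ell(f)-\ell_n(f)| \le \|f\|_{L^{p,q}_\nu}\|h-h_n\|_{L^{p',q'}_\nu}$. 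This completes the Hahn--Banach argument and hence (i).
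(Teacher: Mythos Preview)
Your proof of (ii) is identical to the paper's, and your Hahn--Banach skeleton for (i) is the same as well. The substantive difference is that the paper packages everything through the \emph{adjoint}: it computes, first for $f,g\in\mathcal C_c(D)$ and then by density for all $f\in L^{p',q'}_\nu(D)$, that
\[
P_\mu^\ast f \;=\; T_{\nu,\mu-\nu} f(z,u)\;=\;Q^{\mu-\nu}(\Im z-F(u,u))\int_D B_\mu\bigl((z,u),(w,t)\bigr)\,f(w,t)\,dV_\nu(w,t).
\]
This single observation buys two things at once. First, the duality hypothesis $\langle h,B_\mu(\cdot,(w,t))\rangle_\nu=0$ for all $(w,t)$ is \emph{literally} the statement $P_\mu^\ast h=0$; then for $F\in A^{p,q}_\nu\cap A^2_\mu$ one has $\langle h,F\rangle_\nu=\langle h,P_\mu F\rangle_\nu=\langle P_\mu^\ast h,F\rangle_\nu=0$ by the adjoint relation itself---no Fubini argument, no approximation of $h$ by $h_n$. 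Second, boundedness of $P_\mu^\ast=T_{\nu,\mu-\nu}$ on $L^{p',q'}_\nu$ is what the paper invokes to conclude $B_\mu(\cdot,(w,t))\in L^{p,q}_\nu$, using only the assumption that $P_\mu$ is bounded.

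Your route to that membership, by contrast, is a direct kernel computation via Lemmas~\ref{13} and~\ref{14}. Be careful here: those lemmas carry sharp ``if and only if'' convergence conditions on the exponents (e.g.\ $p(\mu_j+\tfrac{b_j}{2}+\tau_j)>1+n_j+\tfrac{m_j}{2}$ from Lemma~\ref{14}), and you have not argued that the bare hypothesis ``$P_\mu$ is bounded on $L^{p,q}_\nu$'' forces those inequalities. Closing that gap would require deriving necessary conditions for boundedness of $P_\mu$, which is extra work the adjoint approach avoids. Your Fubini workaround for the span argument is correct but also superfluous once $P_\mu^\ast$ is identified: the adjoint identity is established on $\mathcal C_c(D)\times\mathcal C_c(D)$ (where Fubini is trivial) and extended by continuity, so the interchange you labor over is already built in.
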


\begin{proof}
\begin{itemize}
\item[(i)]
We start by establishing that $B_\mu(\cdot,(w,t))\in A^{p,q}_\nu(D)$ for all $(w,t)\in D.$ Let $f,g\in \mathcal C_c (D)$ (continuous functions on $D$ with compact support). Then
\Beas
\langle P_\mu g,f\rangle_\nu=\int_D P_\mu g(z,u)\overline{f(z,u)}dV_\nu(z,u)=\langle g,P^*_\mu f\rangle_\nu
\Eeas
where
\begin{equation} \label{Pstar}
P^*_\mu f(z,u)=Q^{\mu-\nu}(\Im m\,z-F(u,u))\int_D B_\mu((z,u),(w,t))f(w,t)dV_\nu(w,t)
\end{equation}
i.e.
\begin{equation}\label{Pstarplus}
P^*_\mu f=T_{\nu,\mu-\nu}f
\end{equation}
according to (\ref{operator}). By density of $\mathcal{C}_c(D)$ in $L^{p',q'}_\nu(D)$ and continuity of $P^*_\mu$ on $L^{p',q'}_\nu(D),$ we conclude that
\begin{equation}
 P^*_\mu f=T_{\nu,\mu-\nu}f,\,\,\,\forall f\in L^{p',q'}_\nu(D).
\end{equation}
Now, the boundedness of $P_\mu^*$ on $L^{p',q'}_\nu(D)$ means boundedness of $T_{\nu,\mu-\nu}$ on $L^{p',q'}_\nu(D).$
Therefore, $B_\mu(\cdot,(w,t))\in L^{p,q}_\nu(D)$ for all $(w,t)\in D.$

To conclude, we show that for $f\in L^{p',q'}_\nu(D)$ such that
\begin{equation}\label{duality}
 \langle f,B_\mu(\cdot,(w,t))\rangle_\nu=0,\,\,\,\forall (w,t)\in D,
 \end{equation}
we also have $\langle f,F\rangle_\nu=0$ for all $F$ in a dense subspace of $A^{p,q}_\nu(D).$ Now, identities (\ref{duality}), (\ref{Pstar})
and (\ref{Pstarplus}) imply that
$$T_{\nu,\mu-\nu}f=0.$$
Thus, for all $F\in A^{p,q}_\nu(D)\cap A^{2,2}_\mu(D),$ we have
$$\langle f,F\rangle_\nu=\langle f,P_\mu F\rangle_\nu=\langle P_\mu^*f,F\rangle_\nu=\langle T_{\nu,\mu-\nu}f,F\rangle_\nu=0.$$
\item[(ii)]
To prove $(ii),$ we just observe that $P_\mu$ is the identity on the subspace $(A^{p, q}_\nu \cap A^{2, 2}_\mu) (D)$ which is dense on the space $A^{p, q}_\nu (D)$ by Lemma \ref{density}.
\end{itemize}
\end{proof}

\section{The action of the Box operator in tube domains over convex homogeneous cones}

In this section, we restrict to the case where the $\Omega-$Hermitian form $F$ is identically zero and $m=0$. We denote $T_\Omega$ the tube domain over the homogeneous cone $\Omega$ of rank $r,$ i.e. $T_\Omega = V+i\Omega.$ For $\nu \in \mathbb R^r, \hskip 2truemm 1\leq p \leq \infty$ and $1\leq q < \infty,$ let $L^{p, q}_\nu (T_\Omega)$ be the space of measurable functions on $T_\Omega$ such that
$$||f||_{L^{p, q}_\nu (T_\Omega)} :=  \left(\int_{\Omega} \left(\int_V |f(x+iy)|^p dx\right)^{\frac qp} Q^{\nu - \tau} (y)dy\right)^{\frac 1q}$$
is finite (with obvious modification if $p=\infty)$.

We call $A^{p, q}_\nu (T_\Omega)$ the subspace of $L^{p, q}_\nu (T_\Omega)$ consisting of holomorphic functions. For $A^{p, q}_\nu (T_\Omega)\neq \{O\},$ we assume in the sequel that $\nu = (\nu_1,\cdots,\nu_r)$ is such that $\nu_j > \frac {m_j}2$ for each $j=1, \cdots, r.$

We  call $\mathbb P_\nu$ the weighted Bergman projector on $T_\Omega$ and $\mathbb B_\nu$ the associated weighted Bergman kernel on $T_\Omega.$
\begin{defn}
Let $\rho\in \mathbb R^r$ be such that $\rho_j > 0$ for every $j=1,\cdots,r.$
We say that $\rho$ is an $\Omega-$integral vector if the fundamental compound function $(Q^*)^\rho (\xi)$ is a polynomial in $\xi.$
\end{defn}

In the sequel, we fix an $\Omega$-integral vector $\rho.$  We shall now adapt some proofs of \cite[section 6]{BBPR} to tube domains over open convex homogeneous  cones.

\begin{defn} The generalized wave operator (the Box) $\Box=\Box_x$ on the cone $\Omega$ is the differential operator  defined by the equality
$$\Box_x [e^{i(x|\xi)}] = (Q^*)^\rho (\xi)e^{i(x| \xi)} \hskip 2truemm {\rm where} \hskip 2truemm  \xi \in \mathbb R^n.$$
When applied to a holomorphic function on the tube domain $T_\Omega$ over the cone $\Omega,$ we have $\Box = \Box_z = \Box_x$ where $z=x+iy.$ In view of Remark 3.2, for every $\mu \in \mathbb R^r$ such that $\mu_j >  \frac {m_j}2, \hskip 2truemm j=1,\cdots, r,$ we have
\begin{equation}\label{box}
\Box \left(Q^{-\mu} \left(\frac zi\right)\right) = c_\mu Q^{-\mu -\rho} \left(\frac zi\right), \quad z\in T_\Omega.
\end{equation}
\end{defn}

\begin{lem}
For every $f\in A^{2, 2}_\nu (T_\Omega)$ and every $h\in H,$ if we set $f_h (z) = f(hz),$ then
$$\Box (f_h) = Q^\rho(h\cdot e)(\Box f)_h.$$
\end{lem}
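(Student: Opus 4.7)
The natural approach is to reduce to the eigenfunction identity (\ref{box}) via the Laplace/Paley-Wiener representation of $A^{2,2}_\nu(T_\Omega)$.

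First, by Theorem \ref{16} (with $m=0$), write $f = \mathcal{L}g$, i.e.
$$f(z) = (2\pi)^{-n/2}\int_{\Omega^*} e^{i(z|\xi)}\, g(\xi)\,d\xi$$
for some $g \in L^2_{(-\nu)}(\Omega^*)$. Since $\pi(h)$ is a real linear transformation of $V$ and $\pi(h^\star)$ is its adjoint with respect to the trace form $(\cdot|\cdot)$, we have $(hz|\xi) = (z|\pi(h^\star)\xi)$ for $z \in T_\Omega$ and $\xi \in \Omega^*$. Substituting into the Laplace representation and then making the change of variable $\eta = \pi(h^\star)\xi$ (whose Jacobian is the constant $\det \pi(h^\star)$) exhibits $f_h$ itself as a Laplace transform on $T_\Omega$.

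Next, I would apply $\Box$ term by term in the Laplace integral. By (\ref{box}), $\Box_z e^{i(z|\eta)} = (Q^*)^\rho(\eta) e^{i(z|\eta)}$, so
$$\Box f_h(z) = (2\pi)^{-n/2}\int_{\Omega^*} (Q^*)^\rho\bigl(\pi(h^\star)\xi\bigr)\, e^{i(z|\pi(h^\star)\xi)}\, g(\xi)\, d\xi.$$
Now the multiplicativity identity (\ref{QjStar}) applied componentwise to the integral vector $\rho$ gives
$$(Q^*)^\rho\bigl(\pi(h^\star)\xi\bigr) = (Q^*)^\rho(h^\star\cdot\mathbf{e})\,(Q^*)^\rho(\xi),$$
and (\ref{star}) converts the constant factor into $Q^\rho(h\cdot\mathbf{e})$. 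Pulling this factor out of the integral and recognising what remains as $(\Box f)(hz)$ gives the claim:
$$\Box f_h(z) = Q^\rho(h\cdot\mathbf{e})\,(2\pi)^{-n/2}\!\int_{\Omega^*} (Q^*)^\rho(\xi)\, e^{i(hz|\xi)} g(\xi)\,d\xi = Q^\rho(h\cdot\mathbf{e})\,(\Box f)_h(z).$$

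The main technical obstacle is legitimising the differentiation under the integral sign: because $\rho$ is an $\Omega$-integral vector, $(Q^*)^\rho$ is a polynomial, so applying $\Box$ inside the integral produces a polynomially weighted Laplace integrand, whose absolute convergence on compacta of $T_\Omega$ must be checked. One handles this either by using the exponential decay of $e^{i(z|\xi)}$ on $T_\Omega$ (which absorbs the polynomial factor on any compact subset), or by first verifying the identity on the dense subclass of $g$ that are bounded with compact support in $\Omega^*$, and then extending to all of $A^{2,2}_\nu(T_\Omega)$ by the continuity of $\Box$ on holomorphic functions (uniform convergence on compacta). Once this justification is in place, the algebraic computation above closes the proof.
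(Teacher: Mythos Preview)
Your proof is correct and follows essentially the same route as the paper's: represent $f$ via the Paley--Wiener theorem (Theorem~\ref{16}), differentiate under the Laplace integral using the eigenfunction identity for $\Box$, then apply the multiplicativity (\ref{QjStar}) and the conversion (\ref{star}) to pull out the factor $Q^\rho(h\cdot\mathbf e)$. One small slip of citation: the identity $\Box_z e^{i(z|\xi)} = (Q^*)^\rho(\xi)e^{i(z|\xi)}$ is the \emph{definition} of $\Box$ (Definition~4.2), not equation~(\ref{box}).
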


\begin{proof}
In view of the previous theorem, we have
$$f_h (z) = (2\pi)^{-\frac n2} \int_{\Omega^*} e^{i(\pi[h] z|\xi)}g(\xi)d\xi = (2\pi)^{-\frac n2} \int_{\Omega^*} e^{i(z|\pi[h^\star]\xi)}g(\xi)d\xi$$
for some $g\in L^2_{(-\nu)} (\Omega^*).$ The definition of $\Box$ and identity (6) give
\begin{eqnarray*}
\Box (f_h) (z)  &=& (2\pi)^{-\frac n2} \int_{\Omega^*} (Q^*)^\rho (\pi[h^\star]\xi)e^{i(z|h^\star \cdot \xi)}g(\xi)d\xi\\
&=&(2\pi)^{-\frac n2} (Q^*)^\rho (h^\star \cdot e)\int_{\Omega^*} (Q^*)^\rho (\xi)e^{i(\pi [h]|\xi)}g(\xi)d\xi\\
&=&Q^\rho (h\cdot e)(\Box f)_h (z)
\end{eqnarray*}
since by (3), $Q^* (h^\star \cdot e)=Q(h\cdot e).$
\end{proof}

\begin{prop}\label{Boxee}
The Box operator $\Box^k$ is bounded from $A^{p, q}_\nu (T_\Omega)$ to $A^{p, q}_{\nu + kq\rho } (T_\Omega)$ for all $1\leq p, q\leq \infty$ and $\nu=(\nu_1,\cdots,\nu_r)\in \mathbb R^r$ such that $\nu_j > \frac {m_j}2, \hskip 2truemm j=1,...,r.$
\end{prop}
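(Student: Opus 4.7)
\textbf{Plan of proof for Proposition \ref{Boxee}.}

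The plan is to realize $\Box^k$ as a constant multiple of an integral operator of the form $T_{\mu,k\rho}$ of (\ref{operator}), so that the desired boundedness becomes the $L^{p,q}_\nu$-boundedness of $T_{\mu,k\rho}$, handed to us by Theorem \ref{essai}.

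Fix $\mu \in \mathbb R^r$ with $\mu_j > m_j/2$ large (to be quantified). By Lemma \ref{density}, $(A^{p,q}_\nu \cap A^{2}_\mu)(T_\Omega)$ is dense in $A^{p,q}_\nu(T_\Omega)$, so it suffices to prove the norm estimate for $f$ in this dense subspace. On $A^{2}_\mu(T_\Omega)$ one has the reproducing formula
\[
f(z) \;=\; d_\mu \int_{T_\Omega} Q^{-\mu-\tau}\!\left(\tfrac{z-\bar w}{2i}\right) f(w)\, dV_\mu(w).
\]
Because $\Box$ is a constant-coefficient differential operator in $z$, it commutes with the integral; iterated use of (\ref{box}) together with the chain rule for the affine change $\eta = (z-\bar w)/2$ (which contributes only a harmless overall $2^{-dk}$ factor, $d$ being the degree of $\Box$) yields
\[
\Box_z^k\, Q^{-\mu-\tau}\!\left(\tfrac{z-\bar w}{2i}\right) \;=\; c_{\mu,k}\, Q^{-\mu-\tau-k\rho}\!\left(\tfrac{z-\bar w}{2i}\right),
\]
so that $\Box^k f(z) = C_{\mu,k} \int_{T_\Omega} \mathbb B_{\mu+k\rho}(z,w)\, f(w)\, dV_\mu(w)$. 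Multiplying by $Q^{k\rho}(\Im m\, z)$ and comparing with (\ref{operator}) in the tube setting ($m=b=0$, $F\equiv 0$) gives the crucial identification
\[
Q^{k\rho}(\Im m\, z)\, \Box^k f(z) \;=\; C_{\mu,k}\, T_{\mu,k\rho} f(z).
\]
A straightforward unwinding of the norm yields $\|\Box^k f\|_{A^{p,q}_{\nu+kq\rho}} = \|Q^{k\rho}(\Im m\,\cdot)\,\Box^k f\|_{L^{p,q}_\nu}$. Hence the required bound is equivalent to the $L^{p,q}_\nu$-boundedness of $T_{\mu,k\rho}$, which is the content of Theorem \ref{essai}; the two unconditional parameter conditions there, $\mu_j + k\rho_j > (m_j+n_j)/2$ and $\mu_j q - \nu_j > (q-1)m_j/2 + n_j/2$, can always be met by enlarging $\mu$.

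Once the estimate is established on the dense subspace, extend it to all of $A^{p,q}_\nu$ by continuity: if $f_n \to f$ in norm then $\{\Box^k f_n\}$ is Cauchy in $A^{p,q}_{\nu+kq\rho}$, and mean-value inequalities for holomorphic functions force uniform convergence on compact subsets of $T_\Omega$, so the norm limit coincides pointwise with $\Box^k f$.

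The main obstacle is the third hypothesis of Theorem \ref{essai}, namely $k\rho_j q + \nu_j > m_j/2 + (q-1)n_j/2$: under the sole assumption $\nu_j > m_j/2$, this holds for every $j$ and every $q \in [1,\infty)$ precisely when $k\rho_j \geq n_j/2$ for every $j$, and so it may fail for small $k$ and large $q$. In that regime the $(p,q) = (2,2)$ endpoint is treated separately via the Paley--Wiener identity of Theorem \ref{16}: writing $f = \mathcal L g$ with $g \in L^2_{(-\mu)}(\Omega^*)$ yields $\Box^k f = \mathcal L((Q^*)^{k\rho} g)$, whence $\|\Box^k f\|_{A^2_{\mu + 2k\rho}} = c\,\|f\|_{A^2_\mu}$ by Plancherel; complex interpolation between this Hilbert-space endpoint and the easy large-$k$ cases supplied by Theorem \ref{essai} then fills in the remaining values of $(p,q)$.
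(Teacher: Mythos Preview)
Your reduction of $\Box^k$ to the operator $T_{\mu,k\rho}$ is correct and elegant for those $(p,q,k)$ where Theorem~\ref{essai} applies, and you are right that the third hypothesis $k\rho_j q + \nu_j > \tfrac{m_j}{2} + (q-1)\tfrac{n_j}{2}$ is the genuine obstruction: it cannot be removed by enlarging $\mu$, and it does fail for small $k$ and large $q$ under the sole assumption $\nu_j > m_j/2$. The problem is the patch you propose for this gap. Saying ``interpolate between the Hilbert endpoint $(2,2)$ and the easy large-$k$ cases'' does not make sense as stated: the operator $\Box^k$ has $k$ \emph{fixed}, so boundedness of $\Box^K$ for some large $K$ is of no direct help for $\Box^k$. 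If instead you mean to interpolate in $(p,q)$ for fixed $k$, then the region you already control (namely $q$ below the threshold coming from Theorem~\ref{essai}, together with the single point $(2,2)$) is convex and lies entirely in the small-$q$ range, so interpolation cannot reach large $q$. If you have in mind Stein interpolation for the analytic family $z\mapsto \Box^z$ (Fourier multiplier by $(Q^*)^{z\rho}$), this would require controlling $\Box^{it}$ on $A^{p,q}_\nu$ for all real $t$, which is itself a nontrivial multiplier problem you have not addressed, and the target spaces $A^{p,q}_{\nu+zq\rho}$ vary with $z$, adding another layer. As written, the argument does not close.

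The paper avoids this difficulty entirely by a different, more elementary route. It treats $k=1$ and inducts. For $k=1$ it uses the Cauchy integral formula for derivatives to obtain the local averaging estimate
\[
\|\Box f(\cdot+i\mathbf e)\|_p \;\leq\; C\int_{d(\eta,\mathbf e)<1}\|f(\cdot+i\eta)\|_p\,dm(\eta),
\]
then invokes the $H$-equivariance of $\Box$ (Lemma~4.3) to transport this to any $y=h\cdot\mathbf e$, picking up exactly the factor $Q^{-\rho}(y)$. Integrating in $y$ with the weight $Q^{\nu-\tau}$, using H\"older on the invariant ball, the equivalence $Q_j(y)\sim Q_j(\eta)$ for $d(y,\eta)<1$, and Fubini, yields the bound directly for \emph{all} $1\le p,q\le\infty$ with no constraint linking $k$, $q$, $\rho$ and $n_j$. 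The moral difference is that your approach passes through $T^+_{\mu,k\rho}$, i.e.\ the integral operator with absolute values on the kernel, whose boundedness genuinely requires the extra condition; the paper's local Cauchy estimate exploits holomorphy more directly and so does not pay that price.
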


\begin{proof} It suffices to prove the proposition for $k=1.$ The other cases are obtained by induction on $k.$ We denote by $d$ the invariant distance on the cone $\Omega.$ The Cauchy integral formula for derivatives implies that, if $f$ is holomorphic,
$$|\Box f (x+i\mathbf e)| \leq C\int_{|\xi|<1, \hskip 1truemm d(\eta, {\mathbf e})<1 } |f(x-\xi+i\eta)|d\xi d\eta.$$
Hence by the Minkowski integral inequality,
$$||\Box f (\cdot +i\mathbf e)||_p \leq C\int_{d(\eta, {\mathbf e})<1} ||f(\cdot +i\eta)||_p Q^{-\tau} (\eta)d\eta.$$
Here we have introduced the $H$-invariant measure on the cone $\Omega:$
$$dm(\eta) = Q^{-\tau} (\eta)d\eta.$$
We recall thet $Q(\eta) \sim Q(y)$ if $d(\eta, y) <1.$ (See for instance \cite[section 4]{NT}).
Let $f$ be in the dense subspace $(A^{p, q}_\nu  \cap A^{2}_\nu) (T_\Omega)$ and let $y=h\cdot e$ with $h\in H.$
A change of variables combined with the previous lemma gives that
$$||\Box f (\cdot +iy)||_p \leq CQ(y)^{-\rho}\int_{d(\eta, y)<1} ||f(\cdot +i\eta)||_p dm (\eta).$$
Then
\begin{eqnarray*}
||\Box f||^q_{A^{p, q}_{\nu + q\rho} (T_\Omega)} &\leq&C\int_\Omega \left(\int_{d(\eta, y)<1} ||f(\cdot +i\eta)||_p dm (\eta)\right)^q Q^{\nu - \tau} (y)dy\\
&\leq&C\int_\Omega \left(\int_{d(\eta, y)<1} ||f(\cdot +i\eta)||_p^q dm(\eta)\right)\left(\int_{d(\eta, y)<1} dm(\eta)\right)^{q-1}Q^{\nu - \tau} (y)dy
\end{eqnarray*}
by the H\"older inequality. Since
\begin{equation}\label{inv}
\int_{d(\eta, y)<1} dm(\eta)=\int_{d(\eta, \textbf e)<1} dm(\eta)=const.,
\end{equation}
we obtain
\begin{eqnarray*}
||\Box f||^q_{A^{p, q}_{\nu + q\rho} (T_\Omega)} &\leq&C\int_\Omega \left(\int_{d(\eta, y)<1} ||f(\cdot +i\eta)||_p^q Q^{- \tau} (\eta)d\eta\right)Q^{\nu-\tau} (y)dy\\
&\leq&C'\int_\Omega  \left(\int_{d(\eta, y)<1} ||f(\cdot +i\eta)||_p^q Q^{\nu-\tau}(\eta)d\eta\right)Q^{-\tau} (y)dy=C||f||_{A^{p, q}_\nu (T_\Omega)}.
\end{eqnarray*}
since $Q_j (y) \sim Q_j (\eta)$ when $d(\eta, y) <1.$ An application of the Fubini-Tonelli Theorem and of identity (\ref{inv}) gives that
\begin{eqnarray*}
||\Box f||^q_{A^{p, q}_{\nu + q} (T_\Omega)} &\leq&C\int_\Omega \left(\int_{d(\eta, y)<1} Q^{- \tau} (y)dy\right)||f(\cdot +i\eta)||_p^q Q^{\nu - \tau} (\eta)d\eta\\
&\leq&C'\int_\Omega  ||f(\cdot +i\eta)||_p^q Q^{\nu-\tau}(\eta)d\eta=C'||f||_{A^{p, q}_{\nu} (T_\Omega)}.
\end{eqnarray*}
\end{proof}

We show next the relations between properties of $\Box$ and boundedness of $\mathbb P_\nu.$ They will give us a necessary condition for the boundedness of $\mathbb P_\nu.$\\
Our considerations are based on the identity stated in the next lemma. We set
$$M_k f(x+iy) = Q^{k\rho} (y)f(x+iy).$$

\begin{lem}
Let $f$ be a continuous function with compact support in $T_\Omega.$ Then, for every positive integer $k,$
\begin{equation}\label{multiplication}
\Box^k (\mathbb P_\nu f) = \gamma_{\nu, k} \mathbb P_{\nu+k\rho} (M_{-k} f),
\end{equation}
where $\gamma_{\nu, k}$ is a non-zero constant and $\Box^k = \Box \circ ... \circ \Box \quad k$ times.
\end{lem}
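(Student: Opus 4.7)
Since $f$ is continuous with compact support, the integral defining $\mathbb{P}_\nu f$ and all its $z$-derivatives converge absolutely (the kernel is jointly smooth in $z$ and continuous in $w$, bounded on the relevant compact set), so differentiation under the integral sign is trivially justified. It therefore suffices to establish the pointwise kernel identity
$$\Box_z^k\, \mathbb{B}_\nu(z,w) \;=\; \kappa_{\nu,k}\, \mathbb{B}_{\nu+k\rho}(z,w),\qquad z,w\in T_\Omega,$$
for some nonzero constant $\kappa_{\nu,k}$ independent of $w$, and then to rearrange the measures so that the right-hand side is recognized as $\mathbb{P}_{\nu+k\rho}(M_{-k}f)$.

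\textbf{Action of $\Box$ on the kernel.} Write $\mathbb{B}_\nu(z,w) = c_\nu\, \Psi\!\bigl((z-\bar{w})/2\bigr)$ with $\Psi(u) = Q^{-\nu-\tau}(u/i)$, which is holomorphic on $T_\Omega$. By definition $\Box_z$ has constant coefficients (its Fourier symbol $(Q^*)^\rho$ is a polynomial, thanks to the $\Omega$-integrality of $\rho$); furthermore this polynomial is homogeneous of degree $|\rho| := \rho_1 + \cdots + \rho_r$, since the scaling relation $Q^*_j(t\xi) = t\, Q^*_j(\xi)$ for $t > 0$ (an immediate consequence of (\ref{starsimplytransitive}) with $t^\star = \sqrt{t}\,\mathbf{e}$, using (\ref{star})) forces $(Q^*)^\rho(t\xi) = t^{|\rho|} (Q^*)^\rho(\xi)$, and a polynomial with this property is homogeneous of degree $|\rho|$. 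The chain rule and homogeneity of the symbol then give the dilation rule $\Box_u[\Psi(u/2)] = 2^{-|\rho|}\,(\Box \Psi)(u/2)$; combining this with translation-invariance of $\Box_z$ and with (\ref{box}) applied to $\Psi$ yields
$$\Box_z\, \mathbb{B}_\nu(z,w) \;=\; \kappa_\nu\, \mathbb{B}_{\nu+\rho}(z,w), \qquad \kappa_\nu \;:=\; \frac{c_\nu\, c_{\nu+\tau}}{2^{|\rho|}\, c_{\nu+\rho}} \;\neq\; 0,$$
the nonvanishing following from the fact that each constant $c_\mu$ in (\ref{box}) is a nonzero Gamma-type ratio (cf.\ Remark 3.2 and Lemma \ref{integ}). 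Iterating $k$ times gives the desired kernel identity with $\kappa_{\nu,k} = \prod_{j=0}^{k-1} \kappa_{\nu + j\rho}$, still nonzero.

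\textbf{Assembly.} Differentiating under the integral and inserting the kernel identity,
$$\Box^k(\mathbb{P}_\nu f)(z) \;=\; \kappa_{\nu,k} \int_{T_\Omega} \mathbb{B}_{\nu+k\rho}(z,w)\, f(w)\, dV_\nu(w).$$
Since $dV_\nu(w) = Q^{\nu-\tau}(\Im m\, w)\, dw = Q^{-k\rho}(\Im m\, w)\, dV_{\nu+k\rho}(w)$, and $Q^{-k\rho}(\Im m\, w)\, f(w) = (M_{-k}f)(w)$ by definition, the right-hand side equals $\kappa_{\nu,k}\,\mathbb{P}_{\nu+k\rho}(M_{-k}f)(z)$, which proves (\ref{multiplication}) with $\gamma_{\nu,k} := \kappa_{\nu,k} \neq 0$.

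\textbf{Main obstacle.} The only genuinely delicate point is tracking the scaling constant under the composite substitution $z \mapsto (z-\bar{w})/(2i)$ appearing in the Bergman kernel. Translation by $\bar{w}$ is free thanks to the constant coefficients of $\Box$; the factor $1/i$ is absorbed in the standard holomorphic extension $Q_j(\cdot/i)$ used already in (\ref{box}); only the dilation $u \mapsto u/2$ produces an extra multiplicative factor $2^{-|\rho|}$, which traces directly to the homogeneity of the polynomial symbol $(Q^*)^\rho$. Once this bookkeeping is done, both (\ref{multiplication}) and the nonvanishing of $\gamma_{\nu,k}$ follow immediately.
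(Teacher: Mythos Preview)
Your proof is correct. The paper states this lemma without proof, treating it as an immediate consequence of the kernel identity (\ref{box}); your argument supplies exactly the natural details the paper omits --- differentiation under the integral (trivially justified by compact support), applying $\Box_z$ to the kernel via (\ref{box}), and absorbing the weight discrepancy $Q^{-k\rho}(\Im m\,w)$ into $M_{-k}$.

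Two minor remarks. First, your notation overloads the letter $c$: you use $c_\nu$ for the Bergman normalizing constant (the paper's $d_\nu$) and $c_{\nu+\tau}$ for the constant coming from (\ref{box}); these are different families of constants, so it would be cleaner to keep them distinct. Second, your careful tracking of the $2^{-|\rho|}$ dilation factor is correct but somewhat more than the statement requires: since the lemma only asserts the existence of \emph{some} nonzero $\gamma_{\nu,k}$, one can bypass the homogeneity discussion entirely by using the homogeneity of $Q^{-\mu}$ itself to write $Q^{-\mu}\bigl(\tfrac{z-\bar w}{2i}\bigr)=2^{|\mu|}\,Q^{-\mu}\bigl(\tfrac{z-\bar w}{i}\bigr)$ and then applying (\ref{box}) directly (translation invariance of $\Box$ plus $z-\bar w\in T_\Omega$). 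Either route is fine; yours is perhaps more transparent about where each factor comes from.
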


\begin{prop}\label{Boxe}
Assume that $\mathbb P_\nu$ is bounded on $L^{p, q}_\nu (T_\Omega).$ Then  for every positive integer $k, \hskip 2truemm \Box^k$ is an
isomorphism of $A^{p, q}_\nu (T_\Omega)$ onto $A^{p, q}_{\nu +kq\rho} (T_\Omega).$ Moreover, for $f\in A^{p, q}_\nu (T_\Omega),$
\begin{equation}\label{diagram}
\mathbb P_\nu \circ M_k (\Box^k f) = \gamma_{\nu, k}f.
\end{equation}
\end{prop}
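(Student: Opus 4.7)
The plan is to produce an explicit two-sided inverse of $\Box^k$ of the form $S_k := \gamma_{\nu,k}^{-1}\, \mathbb{P}_\nu \circ M_k$, in which case the claimed identity (\ref{diagram}) is just the assertion $S_k\circ \Box^k = \mathrm{Id}$. Boundedness of $\Box^k : A^{p,q}_\nu(T_\Omega)\to A^{p,q}_{\nu+kq\rho}(T_\Omega)$ is Proposition \ref{Boxee}. A change of variable in the mixed-norm definition shows that $M_k$ realises an \emph{isometric} isomorphism $L^{p,q}_{\nu+kq\rho}(T_\Omega) \to L^{p,q}_\nu(T_\Omega)$, so combined with the standing hypothesis that $\mathbb{P}_\nu$ is bounded on $L^{p,q}_\nu(T_\Omega)$, the operator $S_k$ is bounded from $A^{p,q}_{\nu+kq\rho}(T_\Omega)$ into $A^{p,q}_\nu(T_\Omega)$.

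The key step is to invert the intertwining relation (\ref{multiplication}) of the preceding lemma. Substituting $f = M_k\phi$ with $\phi \in \mathcal{C}_c(T_\Omega)$, so that $M_{-k}f = \phi$, that identity rewrites as
\begin{equation*}
\Box^k S_k \phi \;=\; \mathbb{P}_{\nu+k\rho}\phi, \qquad \phi \in \mathcal{C}_c(T_\Omega).
\end{equation*}
I would first prove $\Box^k\circ S_k = \mathrm{Id}$ on $A^{p,q}_{\nu+kq\rho}(T_\Omega)$. By Lemma \ref{density} (applied with $b=0$ to the tube situation), the intersection $\bigl(A^{p,q}_{\nu+kq\rho}\cap A^2_{\nu+k\rho}\bigr)(T_\Omega)$ is dense in $A^{p,q}_{\nu+kq\rho}(T_\Omega)$; on every element $g$ of that intersection one has $\mathbb{P}_{\nu+k\rho}g = g$, and a sequence $\phi_n\in\mathcal{C}_c(T_\Omega)$ converging to $g$ in $L^2_{\nu+k\rho}\cap L^{p,q}_{\nu+kq\rho}$ transfers the identity from $\mathcal{C}_c$ to $g$ by continuity of $\Box^k\circ S_k$ on the mixed-norm side and of $\mathbb{P}_{\nu+k\rho}$ on the Hilbert side. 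Boundedness of $\Box^k\circ S_k$ then propagates $\Box^k S_k g = g$ from the dense subspace to all of $A^{p,q}_{\nu+kq\rho}(T_\Omega)$. A symmetric argument, starting from $f\in(A^{p,q}_\nu\cap A^2_\nu)(T_\Omega)$ (again dense by Lemma \ref{density}) and using the Paley-Wiener representation of Theorem \ref{16} together with (\ref{multiplication}) to justify $\mathbb{P}_\nu(M_k\Box^k f) = \gamma_{\nu,k}f$ on this subspace, establishes $S_k\circ\Box^k = \mathrm{Id}$ on $A^{p,q}_\nu(T_\Omega)$, which is precisely (\ref{diagram}).

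The principal obstacle is this density/limiting step: the preceding lemma is stated only for $\mathcal{C}_c(T_\Omega)$, whereas the operator identities we need live on Bergman spaces on which neither $\mathbb{P}_{\nu+k\rho}$ nor $M_{-k}$ is a priori bounded in the $L^{p,q}$ scale under consideration, so one cannot simply extend the pointwise identity by density in $L^{p,q}_{\nu+kq\rho}$. The way around this is to pass to the limit inside the intersection with the Hilbert space $A^2$ provided by Lemma \ref{density}: there $\mathbb{P}_{\nu+k\rho}$ (respectively $\mathbb{P}_\nu$) already acts as the identity, so the delicate right-hand side of (\ref{multiplication}) collapses, and the combination of continuity of $\Box^k$ and $S_k$ on the mixed-norm spaces with Hilbert-space convergence on the $A^2$ factor is exactly what upgrades the pointwise lemma to the two operator identities needed for the isomorphism.
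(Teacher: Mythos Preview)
Your proposal is correct and follows essentially the same route as the paper: the paper also uses the intertwining identity (\ref{multiplication}) to set up the commutative square with $M_{-k}$ and $\mathbb{P}_{\nu+k\rho}$, obtains surjectivity of $\Box^k$ from it, proves injectivity of $\Box^k$ on $A^2_\nu$ via the Paley--Wiener theorem, and then establishes (\ref{diagram}) on the dense subspace $A^{p,q}_\nu\cap A^2_\nu$ by the cancellation $\Box^k G=\gamma_{\nu,k}\Box^k f\Rightarrow G=\gamma_{\nu,k}f$. Your ``symmetric argument'' for $S_k\circ\Box^k=\mathrm{Id}$ is exactly this last step, so you should make explicit that Paley--Wiener is used only to cancel $\Box^k$ in the identity $\Box^k(S_k\Box^k f)=\Box^k f$ on $A^2_\nu$, rather than to compute $\mathbb{P}_\nu(M_k\Box^k f)$ directly.
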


\begin{proof}
Clearly, $M_{-k}$ is an isometric isomorphism of $L^{p, q}_\nu (T_\Omega)$ onto $L^{p, q}_{\nu +kq\rho} (T_\Omega)$ with
$(M_k)^{-1}=M_{-k}.$ Let $f$ be a continuous function with compact support in $T_\Omega.$ By (\ref{multiplication}),
$$\mathbb P_{\nu+k\rho} f = \gamma_{\nu, k}^{-1} \Box^k \circ \mathbb P_\nu \circ M_k f.$$
By density, $P_{\nu+k\rho}$ is bounded on $L^{p, q}_{\nu +kq\rho} (T_\Omega).$ We then have the following commutative diagram.
\begin{eqnarray*}
L^{p, q}_\nu (T_\Omega) &\stackrel{\mathbb P_\nu}\longrightarrow&A^{p, q}_\nu (T_\Omega)\\
M_{-k} \downarrow &&\downarrow \gamma_{\nu, k}^{-1} \Box^k \\
L^{p, q}_{\nu +kq\rho} (T_\Omega) &\stackrel{\mathbb P_{\nu+k\rho}}\longrightarrow& A^{p, q}_{\nu +kq} (T_\Omega)
\end{eqnarray*}
where each map is continuous. By Remark 3.12 and Proposition 3.13, $\mathbb P_{\nu + k}$ is onto; then also $\Box^k$ is onto.

The rest of the proof goes in the following order. We first prove that $\Box^k$ is one to one for $p=q=2$ (in which case $P_\nu$
is obviously bounded). Then we prove (\ref{diagram}) and finally we show that $\Box^k$ is one to one for general $p, q.$

Let $f=\mathcal L g \in A^{2}_\nu (T_\Omega).$ Then
$$\Box^k f = C_k \mathcal L (Q^{k\rho} g)\in A^{2}_{\nu +2k\rho} (T_\Omega).$$
By Theorem 3.7, if $\Box^k f =0,$ then $g=0$ a.e. and hence $f=0.$

In order to prove (\ref{diagram}), it suffices to take $f \in (A^{p, q}_\nu \cap A^{2}_\nu) (T_\Omega),$
since the left-hand side of (\ref{diagram}) involves continuous operators.

Calling $G$ the left-hand side of (\ref{diagram}), then $G\in (A^{p, q}_\nu \cap A^{2, 2}_\nu)(T_\Omega)$
and, by the commutativity of the diagram above,
$$\Box^k G = \gamma_{\nu, k} \mathbb P_{\nu + k\rho} (\Box^k f) = \gamma_{\nu, k} \Box^k f.$$
By the injectivity of $\Box^k$ on $A^{2, 2}_\nu (T_\Omega),$ we conclude that $G=\gamma_{\nu, k} f.$

Finally assume that $f\in A^{p, q}_\nu (T_\Omega).$ Then the assumption $\Box^k f =0$ implies that $f=0$ by (\ref{diagram}).
\end{proof}

The following theorem was proved in \cite [Theorem 6.8]{NT}.

\begin{thm}\label{boundedness}
Let $\nu =(\nu_1,\cdots,\nu_r) \in \mathbb R^r$ such that $\nu_j > \frac {m_j+n_j+b_j}2$ for each $j=1,\cdots,r.$ We set $q_\nu := 1+\min \limits_{1\leq j \leq r} \frac {\nu_j - \frac {m_j}2}{\frac {n_j}2}.$ Let $1\leq p\leq \infty$ and $2\leq q<\infty.$
Then the weighted Bergman projector $\mathbb P_\nu$ is bounded from $L^{p, q}_\nu (T_\Omega)$ to $A^{p, q}_\nu (T_\Omega)$ for\\
$
\left \{
\begin{array}{clcr}
0&\leq \frac 1p &\leq \frac 12\\
\frac 1{q_\nu p'}&<\frac 1q &< 1-\frac 1{q_\nu p'}
\end{array}
\right.
$
\quad \quad or \quad \quad
$
\left \{
\begin{array}{clcr}
\frac 12&\leq \frac 1p &\leq 1\\
\frac 1{q_\nu p}&<\frac 1q &< 1-\frac 1{q_\nu p}
\end{array}
\right
.
.
$
\end{thm}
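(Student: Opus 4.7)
The plan is to reduce the boundedness of $\mathbb P_\nu$ on $L^{p,q}_\nu(T_\Omega)$ to a Hardy-type inequality for the Box operator $\Box^k$, and then to establish that Hardy inequality by a discretization plus Schur-test argument whose convergence range is precisely the one claimed. The three main ingredients of the reduction are: the boundedness of $\mathbb P_{\nu+k\rho}$ for large $k$ supplied by Corollary~\ref{P+} and Remark~\ref{3.12}; the intertwining identity $\Box^k \mathbb P_\nu f = \gamma_{\nu,k}\,\mathbb P_{\nu+k\rho}(M_{-k}f)$ of the lemma preceding Proposition~\ref{Boxe}; and the Plancherel--Gindikin isomorphism of Theorem~\ref{16}.

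First I would fix an integer $k$ so large that $\mathbb P_{\nu+k\rho}$ is bounded on $L^{p,q}_{\nu+kq\rho}(T_\Omega)$, which is granted by Remark~\ref{3.12}. Proposition~\ref{Boxee} already yields the forward bound $\|\Box^k f\|_{A^{p,q}_{\nu+kq\rho}} \lesssim \|f\|_{A^{p,q}_\nu}$, so, combined with the intertwining identity, the desired boundedness of $\mathbb P_\nu$ on $L^{p,q}_\nu$ follows once one establishes the inverse (Hardy) inequality
\begin{equation*}
\|f\|_{A^{p,q}_\nu(T_\Omega)} \;\lesssim\; \|\Box^k f\|_{A^{p,q}_{\nu+kq\rho}(T_\Omega)}
\end{equation*}
on the dense subspace $(A^{p,q}_\nu \cap A^2_\nu)(T_\Omega)$ furnished by Lemma~\ref{density}.

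The core of the argument is the proof of this Hardy inequality over the stated range. I would proceed by a Whitney-type decomposition of $T_\Omega$ by a finitely-overlapping lattice of Bergman balls $\{B(z_j,\delta)\}$ adapted to the simply transitive action of $H$, on which each fundamental function $Q_i(y)$ is essentially constant. Writing $f = \gamma_{\nu,k}^{-1}\mathbb P_{\nu+k\rho}(M_{-k}\Box^k f)$ and using the Cauchy-type averaging of Proposition~\ref{Boxee} to control the pointwise values $|f(z_j)|$ by a local mean of $|\Box^k f|$, the Hardy inequality reduces to a weighted $\ell^{p,q}$ estimate for a positive kernel on the lattice. Running Schur's lemma with the test function $\varphi(y) = Q^{\gamma}(y)$, in the spirit of the proof of Theorem~\ref{T}, produces an intersection of $r$ pairs of one-sided conditions on the multi-index $\gamma = (\gamma_1,\ldots,\gamma_r)$, and this intersection is non-empty precisely under the endpoint conditions $q < q_\nu p_\sharp$ with $p_\sharp = \min(p,p')$. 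The trivial case $p=q=2$ (orthogonality of $\mathbb P_\nu$ in $L^2_\nu$), together with duality $(\mathbb P_\nu)^* = \mathbb P_\nu$ and complex interpolation, then closes the full claimed region.

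The main obstacle is the sharp joint Schur test in this anisotropic, non-symmetric setting: the parameter $q_\nu$ arises as $\min_j\bigl(1 + (\nu_j - \tfrac{m_j}{2})/\tfrac{n_j}{2}\bigr)$ precisely because each $Q_j$-direction contributes its own two-sided inequality, and the simultaneous feasibility of all $r$ of them is what forces the sharp endpoint $q < q_\nu p_\sharp$. Carrying this out rigorously on the lattice — in particular, obtaining uniform control of the Bergman kernel between adjacent Whitney balls and making efficient use of Lemma~\ref{13} to collapse the resulting iterated integrals — is the technical heart of the proof.
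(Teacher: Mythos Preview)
The paper does not actually prove this theorem: it is quoted verbatim from \cite[Theorem~6.8]{NT}. So there is no in-paper argument to compare with; your proposal has to be measured against the actual proof in \cite{NT}.

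Your overall reduction --- boundedness of $\mathbb P_\nu$ $\Leftrightarrow$ Hardy inequality for $\Box^k$ --- is the correct framework and is exactly the equivalence recorded in Proposition~\ref{Boxe} and the theorem that follows it. The gap is in your proof of the Hardy inequality itself. You claim that a Schur test with $\varphi(y)=Q^\gamma(y)$ on the discretized positive kernel ``is non-empty precisely under the endpoint conditions $q<q_\nu p_\sharp$''. It is not. Once you pass from $L^{p,q}_\nu(T_\Omega)$ to $L^q_\nu(\Omega)$ via Minkowski and Young in the $x$-variable (exactly as in the proof of Theorem~\ref{essai}), the index $p$ disappears entirely: the resulting positive operator on $\Omega$ is $R_{\nu,0}$ of Theorem~\ref{T} (with $b=0$), and the Schur feasibility range there is $q'_\nu<q<q_\nu$, \emph{uniformly in $p$}. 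That is precisely the $P_\nu^+$ range of Corollary~\ref{P+}; no choice of $\gamma\in\mathbb R^r$ can extract a $p$-dependent threshold from a positive kernel on the cone.

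What is missing is a genuine gain at $p=2$ coming from cancellation. At $p=2$ one uses Plancherel in $x$ via Theorem~\ref{16}: writing $f=\mathcal L g$, one has $\|f(\cdot+iy)\|_2^2=\int_{\Omega^*}e^{-2(y|\xi)}|g(\xi)|^2\,d\xi$ and likewise for $\Box^k f$ with the extra factor $(Q^*)^{2k\rho}(\xi)$. The Hardy inequality then becomes an inequality on the dual cone, and Lemma~\ref{integ} plus a Whitney decomposition of $\Omega^*$ gives the range $q<2q_\nu$ along the whole line $p=2$. Interpolating that line with the Schur range $q'_\nu<q<q_\nu$ at $p\in\{1,\infty\}$ (and using duality) is what produces $q<q_\nu p_\sharp$ and the hexagonal region in the statement. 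Your last sentence invokes interpolation only with the single trivial point $p=q=2$, which cannot enlarge the Schur rectangle; you need the full $p=2$ segment obtained from the Paley--Wiener/Plancherel argument, and that is the step your outline does not supply.
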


The following statement is a direct consequence of Proposition \ref{Boxe} and Theorem \ref{boundedness}.

\begin{cor}\label{Box}
For the values of $p, q$ given in Theorem 4.7, $\Box^k$ is an isomorphism of $A^{p, q}_\nu (T_\Omega)$ onto $A^{p, q}_{\nu +k\rho q} (T_\Omega)$
for every positive integer $k.$
\end{cor}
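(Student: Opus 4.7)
The plan is to deduce the corollary directly from Theorem \ref{boundedness} and Proposition \ref{Boxe}. First, for any pair $(p,q)$ in the range described by Theorem \ref{boundedness}, together with the hypothesis $\nu_j > \frac{m_j+n_j+b_j}{2}$ for every $j=1,\dots,r$ (with $b_j=0$ in the tube setting), Theorem \ref{boundedness} gives that the weighted Bergman projector $\mathbb{P}_\nu$ extends to a bounded operator on $L^{p,q}_\nu(T_\Omega)$. This is exactly the hypothesis required by Proposition \ref{Boxe}, which then yields the conclusion: $\Box^k$ is an isomorphism of $A^{p,q}_\nu(T_\Omega)$ onto $A^{p,q}_{\nu+kq\rho}(T_\Omega)$ for every positive integer $k$.

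For clarity, I would briefly recall how the three ingredients packaged inside Proposition \ref{Boxe} are supplied in this range. Boundedness of $\Box^k$ comes from Proposition \ref{Boxee} and does not require the boundedness of $\mathbb{P}_\nu$ at all. Surjectivity is extracted from the commutative diagram displayed in the proof of Proposition \ref{Boxe}, together with the boundedness of $\mathbb{P}_{\nu+k\rho}$ on $L^{p,q}_{\nu+kq\rho}(T_\Omega)$; here I would check that replacing $\nu$ by $\nu+k\rho$ keeps $(p,q)$ in the admissible range of Theorem \ref{boundedness}, which is automatic because the threshold $q_\nu$ can only increase when $\nu$ is shifted by a vector with positive entries. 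Injectivity is established first on $A^2_\nu(T_\Omega)$ via the Plancherel--Gindikin Theorem \ref{16}, and then propagated to general $A^{p,q}_\nu(T_\Omega)$ through the identity $\mathbb{P}_\nu \circ M_k \circ \Box^k = \gamma_{\nu,k}\,\mathrm{Id}$ recorded in (\ref{diagram}).

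There is no substantive obstacle: the corollary is simply an assembly of previously established results. The only bookkeeping item I would keep in view is the monotonicity of the admissible mixed-norm region under $\nu \mapsto \nu + k\rho$, which guarantees that the intermediate projector $\mathbb{P}_{\nu+k\rho}$ used in the surjectivity step is itself covered by Theorem \ref{boundedness}, so that Proposition \ref{Boxe} can be applied with its full force for every positive integer $k$.
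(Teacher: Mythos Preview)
Your proposal is correct and matches the paper's own argument: the corollary is recorded there simply as ``a direct consequence of Proposition \ref{Boxe} and Theorem \ref{boundedness},'' which is precisely your first paragraph. One small correction to your supplementary remarks: in the proof of Proposition \ref{Boxe} the boundedness of $\mathbb{P}_{\nu+k\rho}$ on $L^{p,q}_{\nu+kq\rho}(T_\Omega)$ is not obtained by re-applying Theorem \ref{boundedness} with shifted weight (that theorem concerns $\mathbb{P}_\mu$ on $L^{p,q}_\mu$, same index), but rather from the identity $\mathbb{P}_{\nu+k\rho}=\gamma_{\nu,k}^{-1}\,\Box^k\circ\mathbb{P}_\nu\circ M_k$ together with the already-assumed boundedness of $\mathbb{P}_\nu$; your monotonicity check is therefore unnecessary.
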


A partial converse of Corollary \ref{Box} also holds. (See \cite{BBGNPR, BBGRS} for tube domains over symmetric cones and \cite{BBPR}
for tube domains over Lorentz cones).

\begin{thm}
Assume that $1\leq p<\infty$ and $2\leq q<\infty$ and that for some $k\geq k_0,$ the inequality
$$||f||_{A^{p, q}_\nu (T_\Omega)} \leq C||\Box^k f||_{A^{p, q}_{\nu+k\rho q} (T_\Omega)}$$
holds. Then $\mathbb P_\nu$ is bounded on $L^{p, q}_\nu (T_\Omega).$
\end{thm}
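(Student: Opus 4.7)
The plan is to apply the hypothesis to $g := \mathbb{P}_\nu f$ (for $f$ in a dense subspace of $L^{p,q}_\nu(T_\Omega)$), rewrite the right-hand side using the intertwining identity of Lemma 4.5, and thereby reduce matters to the boundedness of $\mathbb{P}_{\nu+k\rho}$ on $L^{p,q}_{\nu+k\rho q}(T_\Omega)$, which is already guaranteed by Remark 3.12 once $k$ is large enough.

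Concretely, first I fix $k \geq k_0$ large enough so that two things hold simultaneously: the hypothesis applies, and the parameters $\nu + k\rho$ fall into the admissibility range of Corollary 3.11 so that by Remark 3.12 the positive operator $\mathbb{P}^+_{\nu+k\rho}$ (and hence $\mathbb{P}_{\nu+k\rho}$) is bounded on $L^{p,q}_{\nu+k\rho q}(T_\Omega)$. Next I verify that the pointwise multiplication $M_{-k}f(x+iy) = Q^{-k\rho}(y)f(x+iy)$ is an isometric isomorphism from $L^{p,q}_\nu(T_\Omega)$ onto $L^{p,q}_{\nu+k\rho q}(T_\Omega)$, by the direct cancellation
\begin{equation*}
\|M_{-k}f\|_{L^{p,q}_{\nu+k\rho q}}^q = \int_\Omega Q^{-k\rho q}(y)\,\|f(\cdot+iy)\|_p^q\, Q^{\nu+k\rho q-\tau}(y)\,dy = \|f\|_{L^{p,q}_\nu}^q.
\end{equation*}

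Now let $f$ belong to a dense subspace $\mathcal{D}$ of $L^{p,q}_\nu(T_\Omega)$ chosen so that the Bergman integral defining $\mathbb{P}_\nu f$ converges absolutely and $g := \mathbb{P}_\nu f$ lies in $A^{p,q}_\nu(T_\Omega)$. Applying the hypothesis to $g$ and then the intertwining identity of Lemma 4.5 yields
\begin{equation*}
\|\mathbb{P}_\nu f\|_{A^{p,q}_\nu} \leq C\,\|\Box^k(\mathbb{P}_\nu f)\|_{A^{p,q}_{\nu+k\rho q}} = C\,|\gamma_{\nu,k}|\,\|\mathbb{P}_{\nu+k\rho}(M_{-k}f)\|_{A^{p,q}_{\nu+k\rho q}}.
\end{equation*}
The boundedness of $\mathbb{P}_{\nu+k\rho}$ secured in the first step, combined with the isometry of $M_{-k}$, bounds the right-hand side by $C'\|M_{-k}f\|_{L^{p,q}_{\nu+k\rho q}} = C'\|f\|_{L^{p,q}_\nu}$, which is the desired estimate on $\mathcal{D}$. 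A standard density argument then extends $\mathbb{P}_\nu$ to a bounded operator on all of $L^{p,q}_\nu(T_\Omega)$.

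The principal technical point is the choice and justification of $\mathcal{D}$: one needs functions $f$ for which the Bergman integral actually represents a holomorphic function lying in $A^{p,q}_\nu(T_\Omega)$, so that the Hardy-type hypothesis can legitimately be applied to $g = \mathbb{P}_\nu f$, and for which Lemma 4.5 (originally stated for continuous compactly supported $f$) remains valid. For $f \in \mathcal{C}_c(T_\Omega)$ the convergence of $\mathbb{P}_\nu f$ is immediate from compact support, while membership in $A^{p,q}_\nu(T_\Omega)$ can be obtained via pointwise estimates on the kernel $B_\nu$ away from the diagonal, exactly of the type controlled by Corollary 3.11 for some auxiliary parameter. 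Once this is settled, every remaining step is a direct consequence of the identities and Schur-type boundedness results established earlier in the paper.
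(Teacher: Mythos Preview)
Your overall strategy is correct and matches the paper's approach (which is carried out explicitly in the proof of Theorem~5.1, the type~II analogue): intertwine via Lemma~4.5, use the isometry $M_{-k}$, and invoke the boundedness of $\mathbb P_{\nu+k\rho}$ on $L^{p,q}_{\nu+kq\rho}$ for $k$ large (Remark~3.12). The difficulty you yourself flag as ``the principal technical point'' is, however, a genuine gap, and the fix you propose does not work.

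The problem is that in order to apply the Hardy-type hypothesis to $g=\mathbb P_\nu f$ you must already know $g\in A^{p,q}_\nu(T_\Omega)$. For $f\in\mathcal C_c(T_\Omega)$ this amounts to knowing that $B_\nu(\cdot,w)\in L^{p,q}_\nu(T_\Omega)$ for each fixed $w$, and that is \emph{not} a consequence of ``pointwise estimates on the kernel'' or of Corollary~3.11 with an auxiliary parameter: it is essentially equivalent to the very boundedness you are trying to prove (compare Proposition~3.13(i), where $B_\mu(\cdot,w)\in L^{p,q}_\nu$ is obtained \emph{from} the boundedness of $P_\mu$, not the other way around). There are values of $(p,q)$ satisfying the hypotheses of the theorem for which $B_\nu(\cdot,w)\notin L^{p,q}_\nu$ is not excluded a~priori, so your dense class $\mathcal D=\mathcal C_c$ does not do the job.

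The paper (following \cite{BBGRS}) circumvents this by a translation--Fatou argument. One works on the dense subspace $L^{p,q}_\nu\cap L^{2}_\nu$, so that $G:=\mathbb P_\nu f\in A^2_\nu$ automatically. One does \emph{not} apply the hypothesis to $G$ directly, but to the translates $G_\varepsilon(z):=G(z+i\varepsilon\mathbf e)$, which belong to $A^{p,q}_\nu$ for every $\varepsilon>0$ (this is a standard consequence of $G\in A^2_\nu$). Then
\[
\Box^k G_\varepsilon(z)=c\int_{T_\Omega} \mathbb B_{\nu+k\rho}(z+i\varepsilon\mathbf e,w)\,f(w)\,dV_\nu(w),
\]
whose $A^{p,q}_{\nu+kq\rho}$-norm is bounded uniformly in $\varepsilon$ by the $L^{p,q}_{\nu+kq\rho}$-boundedness of $\mathbb P_{\nu+k\rho}$ together with the isometry $M_{-k}$. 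The Hardy inequality gives a uniform bound on $\|G_\varepsilon\|_{A^{p,q}_\nu}$, and Fatou's Lemma as $\varepsilon\to 0$ yields $\|G\|_{A^{p,q}_\nu}\le C\|f\|_{L^{p,q}_\nu}$. This $\varepsilon$-shift is precisely the missing ingredient in your argument.
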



\section{Proof of Theorem 2.1}
In this section, we start by proving the Hardy-type inequality for the homogeneous Siegel domain $D$ of type II. The notations are those of section 2.

\begin{thm}
Let $\nu \in \mathbb R^r$ be such that $\nu_i > \frac {m_i + n_i + b_i}2, \hskip 2truemm i=1,...,r.$ Let $1\leq p<\infty$ and $2\leq q<\infty.$
Assume that there exists a positive integer $k$ and a positive constant $C=C(k, p, q, \nu)$ such that for all $f\in A^{p, q}_\nu (D),$ the following Hardy type inequality holds.
\begin{equation}\label{hardy}
\int_{\mathbb C^n} \int_{\Omega + F(u, u)} \left(\int_V |f(x+iy, u)|^p dx\right)^{\frac qp} Q^{\nu -\tau - \frac b2} (y-F(u, u))dy dv(u)
\end{equation}
$$
\leq C\int_{\mathbb C^n} \int_{\Omega + F(u, u)} \left(\int_V |\Box^k_x f(x+iy, u)|^p dx\right)^{\frac qp} Q^{\nu +kq\rho-\tau - \frac b2} (y-F(u, u))dy dv(u).
$$
Then the Bergman projector $P_\nu$ of $D$ admits a bounded extension to $L^{p, q}_\nu (D).$
\end{thm}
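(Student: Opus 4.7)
The strategy mirrors the converse direction of the tube-domain argument (Theorem 4.9), transplanted to the Siegel-domain setting by exploiting that the Box operator $\Box_z^k$ acts only in the base variable $z$. I would first choose an auxiliary parameter $\mu = (\mu_1,\ldots,\mu_r)\in\mathbb R^r$ with $\mu_j$ large enough that $P_\mu^+$ is bounded on $L^{p,q}_\nu(D)$ (Corollary \ref{P+}); Lemma \ref{density} then makes $(A^{p,q}_\nu\cap A^2_\mu)(D)$ dense in $A^{p,q}_\nu(D)$. It therefore suffices to prove the a priori bound $\|P_\nu f\|_{L^{p,q}_\nu(D)} \leq C\|f\|_{L^{p,q}_\nu(D)}$ on a sufficiently nice dense subspace of $L^{p,q}_\nu(D)$ on which $P_\nu f$ genuinely lies in $A^{p,q}_\nu(D)$.

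The key ingredient is the intertwining identity
\[\Box_z^k(P_\nu f)(z,u) \;=\; \gamma_{\nu,k}\,P_{\nu+k\rho}(M_{-k}f)(z,u),\]
where $M_\ell f(z,u) := Q^{\ell\rho}(\Im m\, z - F(u,u))f(z,u)$ and $\gamma_{\nu,k}$ is a nonzero constant depending only on $\nu$, $k$, and $b$. To prove it, I would differentiate under the Bergman integral (justified by rapid decay of the kernel and of $f$ on the dense subspace). Because $\Box_z$ is constant-coefficient in $z$ and $F(u,v)$ is constant with respect to $z$, iterating (\ref{box}) yields
\[\Box_z^k\,Q^{-\nu-b/2-\tau}\!\left(\tfrac{z-\bar w}{2i} - F(u,v)\right) \;=\; c_{\nu,k}\,Q^{-\nu-k\rho-b/2-\tau}\!\left(\tfrac{z-\bar w}{2i} - F(u,v)\right),\]
which, up to the ratio $d_{\nu,b}/d_{\nu+k\rho,b}$, is $B_{\nu+k\rho}((z,u),(w,v))$. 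The discrepancy between the weights $dV_\nu$ and $dV_{\nu+k\rho}$ is then absorbed by the factor $Q^{-k\rho}(\Im m\, w - F(v,v))$, i.e.\ by $M_{-k}$ applied to $f$.

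With this identity in hand, the argument is a four-link chain. For $f$ in the dense subspace, $P_\nu f\in A^{p,q}_\nu(D)$, so hypothesis (\ref{hardy}) applied to $P_\nu f$ gives
\[\|P_\nu f\|_{L^{p,q}_\nu(D)} \;\leq\; C\,\|\Box_z^k(P_\nu f)\|_{L^{p,q}_{\nu+kq\rho}(D)}.\]
The intertwining identity converts the right-hand side into $|\gamma_{\nu,k}|\,\|P_{\nu+k\rho}(M_{-k}f)\|_{L^{p,q}_{\nu+kq\rho}(D)}$. A direct weight computation (pulling the factor $Q^{-k\rho}(\Im m\, z - F(u,u))$, which is constant in $x$, out of the inner $L^p$-norm) shows that $M_{-k}$ is an isometric isomorphism of $L^{p,q}_\nu(D)$ onto $L^{p,q}_{\nu+kq\rho}(D)$. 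Finally, Remark \ref{3.12} (a consequence of Corollary \ref{P+}) ensures that for $k$ sufficiently large $P_{\nu+k\rho}$ is bounded on $L^{p,q}_{\nu+kq\rho}(D)$. Composing these three estimates yields $\|P_\nu f\|_{L^{p,q}_\nu(D)} \leq C'\|f\|_{L^{p,q}_\nu(D)}$ on the dense subspace, and density extends the bound to the whole space.

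The principal obstacle is the rigorous justification of the intertwining identity. Since $\Box$ is defined via the Fourier transform on $V$, one must justify differentiating under the Bergman integral and commuting $\Box_z^k$ with the quadratic shift $F(u,v)$ appearing in the argument of $Q$. Because $F(u,v)$ is constant in $z$ and $\Box_z$ has constant coefficients, this ultimately reduces to the tube-case formula (\ref{box}) applied kernel by kernel; still, one must work on a sufficiently nice dense class (for instance $\mathcal C_c(D)$ or $(L^{p,q}_\nu\cap L^2_\mu)(D)$) to legitimize the differentiation, and carefully track the multiplicative constants $d_{\nu,b}$, $c_{\nu,k}$ and $d_{\nu+k\rho,b}$ to recover the single nonzero factor $\gamma_{\nu,k}$.
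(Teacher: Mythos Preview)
Your approach follows the same skeleton as the paper's proof: the intertwining identity $\Box_z^k P_\nu f = \gamma_{\nu,k}\,P_{\nu+k\rho}(M_{-k}f)$, the isometry $M_{-k}:L^{p,q}_\nu(D)\to L^{p,q}_{\nu+kq\rho}(D)$, and the boundedness of $P_{\nu+k\rho}$ on $L^{p,q}_{\nu+kq\rho}(D)$ for $k$ large (Remark~\ref{3.12}). The paper also derives the intertwining relation exactly as you do, by applying~(\ref{box}) to the kernel, so your discussion of that step is on target.

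The one substantive difference is how the Hardy inequality is invoked. You want to apply~(\ref{hardy}) directly to $G=P_\nu f$, and you acknowledge this requires working on a dense subspace ``on which $P_\nu f$ genuinely lies in $A^{p,q}_\nu(D)$''; but you never name such a subspace or verify the membership, and this is precisely the circularity the argument must break (we do not yet know $P_\nu$ is bounded, so a priori $P_\nu f$ need not be in $A^{p,q}_\nu(D)$). The paper avoids this by an $\epsilon$-regularisation and Fatou's lemma: for $f\in (L^{p,q}_\nu\cap L^{2,2}_\nu)(D)$ and $G=P_\nu f$, it considers the translates $G_\epsilon(z,u)=G(z+i\epsilon\mathbf e,u)$, which \emph{do} belong to $A^{p,q}_\nu(D)$, applies~(\ref{hardy}) to $G_\epsilon$, observes that $\Box_z^k G_\epsilon(z,u)=C\int_D B_{\nu+k\rho}((z+i\epsilon\mathbf e,u),(w,t))f(w,t)\,dV_\nu(w,t)$ has $L^{p,q}_{\nu+kq\rho}$-norm bounded uniformly in $\epsilon$ (by the boundedness of $P_{\nu+k\rho}$ and the isometry of $M_{-k}$), and then lets $\epsilon\to 0$. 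This $\epsilon$-shift is the missing ingredient in your proposal; without it (or an explicit verification that, say, $B_\nu(\cdot,(w,t))\in A^{p,q}_\nu(D)$ for every $(w,t)$ under the standing hypotheses, so that $P_\nu(\mathcal C_c(D))\subset A^{p,q}_\nu(D)$), the chain of inequalities does not close.
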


\begin{proof}
We adapt the proof of \cite[Theorem 1.3]{BBGRS},  for tube domains over symmetric cones. In this reference, $\nu$ is a real number. We want to prove the existence of some constant $C$ such that, for $f\in (L^{p, q}_\nu \cap L^{2, 2}_\nu)(D),$  we have the inequality
$$||P_\nu f||_{A^{p, q}_\nu (D)} \leq C||f||_{ L^{p, q}_\nu (D)}.$$
Consider such an $f$ with $||f||_{ L^{p, q}_\nu (D)} = 1.$ Call $G=P_\nu f.$ By Fatou's Lemma, it is sufficient to prove that
the functions $G_\epsilon (z, u) := G(z+i\epsilon \mathbf e, u),$ which belong to $A^{p, q}_\nu (D),$ have norms uniformly bounded.
So using (\ref{hardy}), it is sufficient to show that $\Box_z^k G_\epsilon$ is uniformly in $L^{p, q}_{\nu + kq\rho} (D).$
To prove this, we apply (\ref{box}) to obtain the identity
$$\Box_z^k G_\epsilon (z, u) = C\int_D B_{\nu+k\rho} ((z+i\epsilon \textbf e, u), (w, t))f(w, t)dV_\nu (w, t).$$
In view of Proposition \ref{Boxee}, it suffices to prove that $P_{\nu +k\rho}$ is bounded on $L^{p, q}_{\nu + kq\rho} (D)$ for $k$ large.
We apply Remark \ref{3.12} to conclude.
\end{proof}

By Theorem 5.1, it suffices to prove the existence of a positive integer $k$ such that the Hardy type inequality (\ref{hardy}) is valid.
If we make the change of variable $y' = y-F(u, u),$ this inequality takes the following form.
$$
\int_{\mathbb C^n} \left(\int_{\Omega} \left(\int_V |f(x+iy' +iF(u, u), u)|^p dx\right)^{\frac qp} Q^{\nu -\tau - \frac b2} (y')dy'\right)dv(u)
$$
$$
\leq C\int_{\mathbb C^n} \left(\int_{\Omega} \left(\int_V |\Box^m_x f(x+iy'+iF(u,u), u)|^p dx\right)^{\frac qp} Q^{\nu +kq\rho-\tau - \frac b2} (y')dy'\right)dv(u).
$$
For any fixed $u\in \mathbb C^n,$ we consider the holomorphic function $f_u: T_\Omega \rightarrow \mathbb C$ defined by
$$f_u (x+iy') = f(x+iy'+iF(u, u), u).$$
Also observe that the property $f\in A^{p, q}_\nu (D)$ can be expressed in the following form.
$$||f||^q_{A^{p, q}_\nu (D)} = \int_{\mathbb C^n} \left(\int_{\Omega} \left(\int_V |f(x+iy' +iF(u, u), u)|^p dx\right)^{\frac qp} Q^{\nu -\tau - \frac b2} (y')dy'\right) dv(u) < \infty.$$
An application of the Fubini Theorem gives that, for almost all $u\in \mathbb C^n,$ the function $f_u$ belongs to $A^{p, q}_\nu (T_\Omega).$

Assume that the Bergman projector $\mathbb P_{\nu - \frac b2}$ of $T_\Omega$ is bounded on $A^{p, q}_{\nu -\frac b2} (T_\Omega).$
By Proposition \ref{Boxe}, this implies that for every positive integer $k,$ $\Box^k$ is an isomorphism of $A^{p, q}_{\nu -\frac b2} (T_\Omega)$
onto $A^{p, q}_{\nu +kq\rho-\frac b2} (T_\Omega).$ So there exists a positive constant $C$ such that
$$\int_{\Omega} \left(\int_V |f_u(x+iy)|^p dx\right)^{\frac qp} Q^{\nu -\tau - \frac b2} (y)dy$$
$$ \leq C\int_{\Omega} \left(\int_V |\Box^k_x f_u(x+iy)|^p dx\right)^{\frac qp} Q^{\nu +kq\rho -\tau - \frac b2} (y)dy$$
for almost all $u\in \mathbb C^n.$ An integration with respect to $u$ finishes the proof.

\section{The case of the Pyateckii-Shapiro Siegel domain of type II}
\subsection{Bergman projections and Besov-type spaces in tube domains over symmetric cones}
Let $\O$ be a symmetric cone of rank $r$ in a Euclidean Jordan algebra $V.$ We describe a  Littlewood-Paley decomposition adapted to to the geometry of $\O.$ Referring to \cite{BBGR}, we call $d$ the invariant distance in $\O.$ Let $\{\xi_j\}$ be a fixed $(\frac 12, 2)$-lattice in $\Omega$ and let $B_j$ be the d-ball $B_1 (\xi_j)$ with centre $\xi_j$ and radius 1. These balls $\{B_j\}$ form a covering of $\O.$ We choose a real function $\varphi_0\in C_c^\infty (B_2 (\mathbf e))$ such that
$$0\leq \varphi_0 \leq 1, \quad {\rm and} \quad \varphi_0\vert_{B_1 (\mathbf {e})} \equiv 1.$$
We write $\xi_j =g_j \mathbf e,$ for some $g_j\in T.$ Then, we can define $\varphi_j (\xi) = \varphi_0 (g_j^{-1} \xi),$ so that
$$\varphi_j \in C_c^\infty (B_2 ( {\xi_j})), \quad 0\leq \varphi_j \leq 1, \quad {\rm and} \quad \varphi_j\vert_{B_j} \equiv 1.$$
We assume that $\xi_0 = \bf e$ to avoid ambiguity of notation. By the finite intersection property of the lattice $\{\xi_j\},$ there exists a constant $c>0$ such that
$$\frac 1c \leq \Phi (\xi) := \sum \limits_j \varphi_j (\xi) \leq c.$$
We define the function $\psi_j$ by $\widehat \psi_j=\frac {\varphi_j}\Phi.$
The Besov-type spaces, $B_\nu^{p,q}, \hskip 2truemm \nu=(\nu_1,\ldots,\nu_r) \in \mathbb R^r, \hskip 1truemm 1\leq p \leq \infty, \hskip 1truemm 1\leq q<\infty,$ adapted to this
 Littlewood-Paley decomposition  are defined as the equivalence classes of tempered distributions which have
 finite seminorms
 \Bea\label{besov}
 \|f\|_{B_\nu^{p,q}}=\left[\sum_j(Q^*)^{-\nu}(\xi_j)\|f*\psi_j\|_p^q\right]^{\frac{1}{q}}.
 \Eea
 When $n=1,$ and $\xi_j=2^j,$ the norm (\ref{besov}) corresponds to the classical Besov space $B_{p,q}^{-\nu/q}(\R).$
We denote by $S_{\O}$ the space   of Schwartz functions
$f: V\rightarrow \mathbb C$ with $\mbox{Supp}\hskip 1truemm \widehat{f}\subset \overline{\O}.$ One basic tool is a special decomposition for functions in $S_{\O},$
\Bea\label{Littlewood-Paley}
f=\sum_jf*\psi_j,\,\,\,\mbox{for all}\,\,f\in S_{\O}.
\Eea
Moreover, we call $\mathcal D_{\Omega}$ the subspace of $S_{\O}$ consisting of those functions whose support is compact in $\Omega.$   We point out that the subspace $\mathcal D_{\Omega}$ is dense in $B_\nu^{p,q} .$




We further refer to \cite{DD}, \cite{DD1} and \cite{BBGR}.
We normalize the Fourier transform by
$$\widehat{f}(\xi)=\F f(\xi)=\frac{1}{(2\pi)^n}\int_Ve^{-i(x|\xi)}f(x)dx,\,\,\,\mbox{for}\,\,\xi\in V$$
and like in section 3, we  define the Laplace transform $\mathcal L$ by
$$\mathcal L (f)(z) =\frac{1}{(2\pi)^{\frac n2}}\int_\O e^{i(z|\xi)}f(\xi)d\xi,\,\,\,\mbox{for}\,\,z\in T_\Omega.$$
We call $\mathcal C$ the operator $C(f) = \mathcal L(\widehat f).$ We call $p'$  the conjugate index of $p$ and for $\nu =(\nu_1,\cdots,\nu_r)\in \mathbb R^r,$ we adopt the following notations.
$$p_\sharp = \min (p, p');$$
$$p_\nu = 1+\min \limits_{j=1,\cdots,r} \frac {\nu_j +\frac nr}{(\frac {n_j}2 -\nu_j)_+};$$
$$q_\nu = 1+\min \limits_{j=1,\cdots,r} \frac {\nu_j -\frac {m_j}2}{\frac {n_j}2};$$
$$q_\nu (p) = p_\sharp q_\nu;$$
$$\widetilde q_{\nu, p} = \min \limits_{j=1,\cdots,r} \frac {\nu_j +\frac {n_j}2}{\left(\frac {n_j}{2p'} -\left(1+\frac {m_j}2\right)\right)_+}.$$
We record the following theorem due to D. Debertol. (See for instance
\cite[Theorem 4.6]{DD}, \cite[Theorem 1.2 and Corollary 4.7]{DD1}).

\begin{thm}
Let $\nu =(\nu_1,\cdots,\nu_r)\in \mathbb R^r,$  such that $\nu_j >\frac{m_j}{2},\,\,j=1,\ldots,r$ and let $1<p<\infty$ and $1<q<\widetilde q_{\nu, p}.$ Then, for every $F\in A^{p,q}_\nu (T_\O)$, there is a tempered distribution $f$ in $V$ such that $||f||_{B^{p,q}_\nu } <\infty$ and $F=\mathcal C f.$ Moreover we have
\begin{enumerate}
\item
$\lim \limits_{y\rightarrow 0, \hskip 1truemm y\in \Omega} F(\cdot +iy) = f$ both in ${\mathcal S}'(V)$ and in $B^{p,q}_\nu;$
\item
$||f||_{B^{p,q}_\nu} \lesssim ||F||_{A^{p,q}_\nu}.$
\end{enumerate}
\end{thm}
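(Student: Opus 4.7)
\emph{Proof plan.} The plan is to identify the boundary distribution $f$ via Plancherel--Gindikin on a dense subclass, then to estimate $\|f\|_{B^{p,q}_\nu}$ by discretizing $\|F\|_{A^{p,q}_\nu}$ along the lattice $\{\xi_j\}$ and transferring the frequency-localized pieces to the cone.

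First, assume $F\in A^{p,q}_\nu(T_\O)\cap A^2_\mu(T_\O)$ for some $\mu$ in the admissible range (a dense subspace by the tube-domain analogue of Lemma~\ref{density}). Theorem~\ref{16} with $m=0$ furnishes $g\in L^2_{(-\mu)}(\O^*)$ with $F=\L g$. I define $f := (2\pi)^{n/2}\,\F^{-1}g \in \mathcal S'(V)$, a tempered distribution whose Fourier support lies in $\overline{\O^*}$, so that $F=\mathcal C f$ by construction. For $\varphi\in\mathcal S(V)$, a direct calculation gives
\[ \langle F(\cdot+iy),\varphi\rangle = \int_{\O^*} g(\xi)\,e^{-(y|\xi)}\widehat\varphi(-\xi)\,d\xi, \]
and since $g\,\widehat\varphi$ is integrable on $\O^*$, dominated convergence as $y\to 0$ in $\O$ yields $F(\cdot+iy)\to f$ in $\mathcal S'(V)$.

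Second, for the quantitative bound $\|f\|_{B^{p,q}_\nu}\lesssim\|F\|_{A^{p,q}_\nu}$, I spectrally localize $F$ using the partition $\{\varphi_j/\Phi\}$: set
\[ F_j(x+iy) := \int_{\O^*} e^{i(x+iy|\xi)}g(\xi)\,\frac{\varphi_j(\xi)}{\Phi(\xi)}\,d\xi, \]
so that $F_j(\cdot+iy)\to f*\psi_j$ in $\mathcal S'(V)$ and $\widehat{F_j(\cdot+iy)}$ is supported in $B_2(\xi_j)\subset\O^*$. Pick $y_j\in\O$ dual to $\xi_j$, so that $(y_j|\xi_j)\sim 1$ and $Q(y_j)(Q^*)(\xi_j)\sim 1$; such a choice exists by the simple transitivity of $H$ on $\O$ and $H'$ on $\O^*.$ For $y$ in the invariant ball $d(y,y_j)<1$, the factor $e^{-(y|\xi)}$ remains comparable to $1$ uniformly in $\xi\in B_2(\xi_j)$, so a Bernstein-type inequality for functions with spectrum in a translate of a fixed compact gives $\|f*\psi_j\|_p \lesssim \|F_j(\cdot+iy)\|_p$ for such $y$. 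Multiplying by $(Q^*)^{-\nu}(\xi_j)\sim Q^\nu(y_j)$ and integrating over the ball $d(y,y_j)<1$ (of bounded invariant volume, on which $Q(y)\sim Q(y_j)$) yields
\[ (Q^*)^{-\nu}(\xi_j)\,\|f*\psi_j\|_p^q \lesssim \int_{d(y,y_j)<1}\|F_j(\cdot+iy)\|_p^q\,Q^{\nu-\tau}(y)\,dy. \]
Summing over $j$ and using the finite intersection property of the covering $\{B_2(\xi_j)\}$ gives
\[ \|f\|_{B^{p,q}_\nu}^q \lesssim \int_\O \Bigl(\sum_j \|F_j(\cdot+iy)\|_p^q\Bigr) Q^{\nu-\tau}(y)\,dy. \]

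The principal obstacle is the vector-valued Littlewood--Paley (square function) inequality
\[ \sum_j \|F_j(\cdot+iy)\|_p^q \lesssim \|F(\cdot+iy)\|_p^q, \]
a cone-multiplier estimate on $V$ for frequencies localized in translates $B_2(\xi_j)\subset\O^*$: this is precisely where the restrictions $1<p<\infty$ and $q<\widetilde q_{\nu,p}$ enter, the upper bound on $q$ controlling the summation against the diagonal weights associated to the lattice. Once it is in hand we obtain $\|f\|_{B^{p,q}_\nu}^q \lesssim \|F\|_{A^{p,q}_\nu}^q$. Convergence in $B^{p,q}_\nu$ finally follows by applying the same estimate to the difference $F(\cdot+i\varepsilon\mathbf e)-F$ on the dense subspace $\mathcal D_\O$ (on which the convergence is immediate) and invoking the uniform bound just proved to extend the map $F\mapsto f$ continuously to all of $A^{p,q}_\nu(T_\O)$.
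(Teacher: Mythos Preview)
The paper does not give its own proof of this statement; it is quoted from Debertol \cite{DD,DD1} and used as a black box. So there is nothing to compare to, and your proposal must stand on its own.

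There is a genuine gap at what you yourself call the ``principal obstacle''. After obtaining
\[
(Q^*)^{-\nu}(\xi_j)\,\|f*\psi_j\|_p^q \;\lesssim\; \int_{d(y,y_j)<1}\|F_j(\cdot+iy)\|_p^q\,Q^{\nu-\tau}(y)\,dy,
\]
you sum over $j$ but \emph{drop} the constraint $d(y,y_j)<1$, arriving at an integral over all of $\Omega$ of $\sum_j\|F_j(\cdot+iy)\|_p^q$, and then ask for the ``square function'' inequality $\sum_j\|F_j(\cdot+iy)\|_p^q\lesssim\|F(\cdot+iy)\|_p^q$. That inequality is false in general (already for $p=q=2$ it would force $\sum_j\|F_j\|_2^2\lesssim\|F\|_2^2$ with no orthogonality at hand, since the $\widehat\psi_j$ overlap), and the restriction $q<\widetilde q_{\nu,p}$ has nothing to do with it. Your invocation of ``the finite intersection property of the covering $\{B_2(\xi_j)\}$'' is also misplaced: those are balls in $\Omega^*$, whereas the integration variable $y$ lives in $\Omega$.

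The fix is much simpler than a vector-valued Littlewood--Paley theorem. Note that $F_j(\cdot+iy)=F(\cdot+iy)*\psi_j$, so by Young's inequality and the uniform bound $\|\psi_j\|_1\leq C$ (which follows from $\widehat\psi_j(\xi)=\widehat\psi_0(g_j^{-1}\xi)$) one has $\|F_j(\cdot+iy)\|_p\lesssim\|F(\cdot+iy)\|_p$ for every $y$. Keep the constraint $d(y,y_j)<1$ when you sum:
\[
\|f\|_{B^{p,q}_\nu}^q\;\lesssim\;\sum_j\int_{d(y,y_j)<1}\|F(\cdot+iy)\|_p^q\,Q^{\nu-\tau}(y)\,dy
=\int_\Omega \#\{j:d(y,y_j)<1\}\,\|F(\cdot+iy)\|_p^q\,Q^{\nu-\tau}(y)\,dy,
\]
and now the finite overlap of the \emph{dual} lattice balls in $\Omega$ gives $\#\{j:d(y,y_j)<1\}\leq C$, hence $\|f\|_{B^{p,q}_\nu}\lesssim\|F\|_{A^{p,q}_\nu}$ with no condition on $q$ at this stage.

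Consequently you have misidentified where $q<\widetilde q_{\nu,p}$ enters. In Debertol's argument that hypothesis is used not for the Besov estimate but to guarantee, for a \emph{general} $F\in A^{p,q}_\nu(T_\Omega)$ (not just one in the dense subspace $A^{p,q}_\nu\cap A^2_\mu$), an a priori bound of the type $\|F(\cdot+iy)\|_p\lesssim Q^{-\alpha}(y)\|F\|_{A^{p,q}_\nu}$ near the boundary, which is what makes $F(\cdot+iy)$ bounded in $\mathcal S'(V)$ and forces the limit $f$ to exist as a tempered distribution and to satisfy $F=\mathcal C f$. Your density argument sidesteps this but does not recover the assertion that the abstract Besov-limit $f$ is actually the $\mathcal S'$-boundary value of $F$; that identification is precisely where the upper bound on $q$ is spent.
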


D. Debertol also proved the following theorem found in \cite[Theorem 5.8]{DD} and  \cite[Theorem 1.3]{DD1}.

\begin{thm}
Let $\nu=(\nu_1,\cdots,\nu_r)\in \mathbb R^r$ such that $\nu_j >\frac {m_j}2, \hskip 2truemm j=1,\cdots,r$ and $1<p<p_\nu, \hskip 2truemm q'_\nu (p) < q < \widetilde q_{\nu, p}.$ The following assertions are equivalent.
\begin{enumerate}
\item
The Bergman projector $\mathbb P_\nu$ of $T_\O$ admits a bounded extension from $L^{p, q}_\nu (T_\Omega)$ to $A^{p, q}_\nu (T_\Omega).$
\item
The operator $\mathcal C$ is an isomorphism from $B^{p, q}_\nu $ to $A^{p, q}_\nu (T_\Omega).$
\end{enumerate}
\end{thm}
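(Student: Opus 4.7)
Both implications rest on the view that $\mathcal C$ is the formal inverse of the boundary-value map $b:F\mapsto \lim_{y\to 0,\,y\in\Omega}F(\cdot+iy)$. Theorem 6.1 and the Plancherel--Gindikin Theorem \ref{16} are the two bridges used to transfer boundedness between $\mathbb P_\nu$ and $\mathcal C^{\pm1}$.

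\textbf{$(1)\Rightarrow(2)$.} Assume $\mathbb P_\nu$ is bounded on $L^{p,q}_\nu(T_\Omega)$. Theorem 6.1 tells us exactly that the boundary map $b:A^{p,q}_\nu\to B^{p,q}_\nu$ is well-defined, bounded, and satisfies $F=\mathcal C(bF)$; hence $\mathcal C$ is surjective onto $A^{p,q}_\nu$ and $b=\mathcal C^{-1}$ is bounded. It remains to show that $\mathcal C:B^{p,q}_\nu\to A^{p,q}_\nu$ is bounded in the forward direction. I would argue by density on $\mathcal D_\Omega\subset B^{p,q}_\nu$: for $f\in\mathcal D_\Omega$, Theorem \ref{16} puts $\mathcal C f$ in $A^2_\nu$, and I would construct $G\in L^{p,q}_\nu\cap L^2_\nu$ with $\mathbb P_\nu G=\mathcal C f$ and $\|G\|_{L^{p,q}_\nu}\lesssim\|f\|_{B^{p,q}_\nu}$. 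A natural candidate is $G(x+iy)=\sum_j (f*\psi_j)(x)\,\eta_j(y)$, where $\{\eta_j\}$ is a non-negative family of cutoffs on $\Omega$ adapted to the $(1/2,2)$-lattice and normalized so that the Bergman reproducing formula recovers each Paley--Wiener block $\mathcal C(f*\psi_j)$ of $\mathcal Cf$. Integrating $Q^{\nu-\tau}(y)$ against $\eta_j(y)^q$ produces the weight $(Q^*)^{-\nu}(\xi_j)$ via Lemma \ref{integ} and (\ref{star}), so the resulting sum matches (\ref{besov}) term by term. Boundedness of $\mathbb P_\nu$ then yields $\|\mathcal Cf\|_{A^{p,q}_\nu}\lesssim\|G\|_{L^{p,q}_\nu}\lesssim\|f\|_{B^{p,q}_\nu}$, and density completes the argument.

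\textbf{$(2)\Rightarrow(1)$.} Now assume $\mathcal C:B^{p,q}_\nu\to A^{p,q}_\nu$ is an isomorphism. For $g\in L^{p,q}_\nu\cap L^2_\nu$ (dense in $L^{p,q}_\nu$), Theorem \ref{16} produces $h_g\in L^2_{(-\nu)}(\Omega^*)$ with $\mathbb P_\nu g=\mathcal L h_g$; equivalently $\mathbb P_\nu g=\mathcal C f_g$ where $f_g$ is the tempered distribution determined by $\widehat{f_g}=c_\nu\chi_{\Omega^*}h_g$. The isomorphism hypothesis gives $\|\mathbb P_\nu g\|_{A^{p,q}_\nu}\lesssim\|f_g\|_{B^{p,q}_\nu}$, so the task reduces to the estimate $\|f_g\|_{B^{p,q}_\nu}\lesssim\|g\|_{L^{p,q}_\nu}$. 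This follows from a direct Littlewood--Paley computation: since $\widehat{\psi_j}$ is supported in $B_2(\xi_j)\subset\Omega$, the block $f_g*\psi_j$ may be written as the convolution of a spatial slice of $g$ with a kernel whose $L^1$-norm is $\lesssim(Q^*)^\nu(\xi_j)$; inserting this into (\ref{besov}) and applying H\"older/Minkowski in the $(p,q)$ pairing yields the bound.

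\textbf{Main obstacle.} The subtle step is the $(1)\Rightarrow(2)$ construction of the Bergman preimage $G$, which amounts to the non-trivial half $\|F\|_{A^{p,q}_\nu}\lesssim\|bF\|_{B^{p,q}_\nu}$ of the norm equivalence. The analytic heart is an almost-orthogonality of the Bergman kernels $\mathbb B_\nu$ against the Littlewood--Paley projections $\{f*\psi_j\}$, together with the cone-geometric scaling $Q^{\nu-\tau}(y)\leftrightarrow(Q^*)^{-\nu}(\xi_j)$ at points $y$ in the $d$-ball around $\xi_j^{-1}\cdot\mathbf e$. A clean way to execute this is a Schur-type argument on $\Omega$ built on the simply transitive $H$-action (\ref{simplytransitive}), which lets one tune the cutoffs $\eta_j$ so that $\mathbb P_\nu G=\mathcal C f$ \emph{exactly} while simultaneously controlling the weighted mixed norm.
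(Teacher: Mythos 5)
First, a framing remark: the paper does not prove this statement at all; it is quoted from D.~Debertol (\cite[Theorem 5.8]{DD}, \cite[Theorem 1.3]{DD1}), so there is no internal proof to compare against. Your sketch correctly identifies the skeleton — Theorem 6.1 already supplies a bounded map $b$ with $\mathcal C\circ b=\mathrm{id}$ on $A^{p,q}_\nu$ in the stated range, so the entire content is the forward bound $\|\mathcal Cf\|_{A^{p,q}_\nu}\lesssim\|f\|_{B^{p,q}_\nu}$ and its transfer back to $\mathbb P_\nu$ — but both of the constructions you propose for these two steps break down. For $(1)\Rightarrow(2)$: with $G(x+iy)=\sum_j(f*\psi_j)(x)\eta_j(y)$ one computes that $\mathbb P_\nu G=c\,\mathcal F^{-1}\bigl((Q^*)^{\nu}(\xi)\widehat f(\xi)\sum_j\widehat\psi_j(\xi)\int_\Omega e^{-(v|\xi)}\eta_j(v)Q^{\nu-\tau}(v)\,dv\bigr)$ extended holomorphically, so the exact identity $\mathbb P_\nu G=c'\mathcal Cf$ forces $(Q^*)^{\nu}(\xi)\int_\Omega e^{-(v|\xi)}\eta_j(v)Q^{\nu-\tau}(v)\,dv$ to be constant on $B_2(\xi_j)$; by analytic continuation and the uniqueness of Laplace transforms together with Lemma \ref{integ}, a nonnegative $\eta_j$ can achieve this on an open set only if $\eta_j\equiv\mathrm{const}$ on all of $\Omega$. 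No compactly supported cutoff can be ``tuned'' as you claim; at best you get $\mathbb P_\nu G=\mathcal C(Tf)$ with a multiplier $T$ that still must be inverted on $B^{p,q}_\nu$. The route that works — and that this paper has already set up — is the Box operator: under (1), Proposition \ref{Boxe} gives $\mathbb P_\nu\circ M_k\circ\Box^k=\gamma_{\nu,k}\,\mathrm{id}$ on $A^{p,q}_\nu$, hence $\|\mathcal Cf\|_{A^{p,q}_\nu}\lesssim\|\Box^k\mathcal Cf\|_{A^{p,q}_{\nu+kq\rho}}$, and since $\Box^k\mathcal Cf=c\,\mathcal Cg$ with $\widehat g=(Q^*)^{k\rho}\widehat f$ and $\|g\|_{B^{p,q}_{\nu+kq\rho}}\sim\|f\|_{B^{p,q}_\nu}$, for $k$ large the right-hand side is controlled by the elementary (triangle-inequality) case of the embedding.

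For $(2)\Rightarrow(1)$ the gap is worse: the ``direct Littlewood--Paley computation'' cannot close. Writing $f_g*\psi_j=c\int_\Omega g(\cdot+iv)*k_{j,v}\,Q^{\nu-\tau}(v)\,dv$ with $\widehat{k_{j,v}}=(Q^*)^{\nu}\widehat\psi_j\,e^{-(v|\cdot)}$, the best available kernel bound is $\|k_{j,v}\|_1\lesssim(Q^*)^{\nu}(\xi_j)e^{-\gamma(\xi_j|v)}$; inserting this into (\ref{besov}) and applying H\"older leaves the factor $\sum_je^{-\gamma(\xi_j|v)}\sim\int_{\Omega^*}e^{-\gamma(\xi|v)}(Q^*)^{-\tau}(\xi)\,d\xi$, which diverges (it is the infinite invariant mass of $\Omega^*$ near its boundary; compare Lemma \ref{integ}, which requires strictly positive exponents). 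This is not a technical nuisance but a sign that the approach is wrong: if $\|f_g\|_{B^{p,q}_\nu}\lesssim\|g\|_{L^{p,q}_\nu}$ followed from a crude convolution estimate, the theorem would make $\mathbb P_\nu$ bounded whenever the easy embedding holds, which is false. The correct argument is by duality, $\langle f_g,h\rangle\simeq\langle \mathbb P_\nu g,\mathcal Ch\rangle_\nu=\langle g,\mathcal Ch\rangle_\nu$ with $h$ ranging over the dual Besov ball, and it requires the forward embedding at the dual exponents $(p',q')$ — which is exactly where the standing hypotheses $1<p<p_\nu$ and $q'_\nu(p)<q<\widetilde q_{\nu,p}$ are used. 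Your argument never invokes these constraints, which is the clearest indication that an essential ingredient is missing.
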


The following two results are generalizations of  \cite[Theorem 4.11]{BBGR} and \cite[Lemma 4.14]{BBGR}. For the analysis on symmetric cones, we also refer to \cite{FaKo}.

 \begin{thm}\label{4.11}
 Let $\nu=(\nu_1,\ldots,\nu_r)\in\R^r$ such that $\nu_j>\frac{m_j}{2},\,\,j=1,\ldots,r$ and $1\leq p,\,s<\infty.$ Assume that there exist a number
 $\da>0,$ a vector $\mu=(\mu_1,\ldots,\mu_r)\in\R^r$ with $\mu_j >0, =1,\ldots,r$ and a constant $C=C(\mu,\da)>0$ such that the estimate
 \Bea\label{4.12}
 \left\|\sum_j f_j\right\|_p\leq C\left[\sum_j(Q^*)^{-\mu}(\xi_j)e^{\da(\xi_j|{\bf e})}\|f_j\|^s_p\right]^\frac{1}{s}
 \Eea
 holds for every finite sequence $\{f_j\}\subset L^p(V)$ with $\mbox{Supp}\hskip 1truemm \widehat{f_j}\subset B_2 (\xi_j).$
We assume that the index $q$ satisfies one of the following conditions.
\begin{enumerate}
\item[(i)]
$1\leq q\leq s$ and $q < s\min \limits_{j=1,\ldots,r}\frac{\nu_j-\frac{m_j}{2}}{\mu_j};$
\item[(ii)]
$s<q<\min \left \{s\min\limits_{j=1,\ldots,r}\frac{\nu_j-\frac{m_j}{2}+\frac{n_j}{2}}{\mu_j+\frac{n_j}{2}}, \widetilde{q}_{\nu,p}\right\}.$
\end{enumerate}
Then for every function $f\in \mathcal S_{\O},$  the function $F=\mathcal{C}(f)$ belongs to $A_\nu^{p,q}(T_\O),$
 and moreover,
 $$\|F\|_{A_\nu^{p,q}(T_\O)}\lesssim\|f\|_{B_\nu^{p,q}(T_\O)}.$$
 \end{thm}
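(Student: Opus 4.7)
The plan is to adapt the Littlewood--Paley argument used for tube domains over symmetric cones to the homogeneous setting. Let $f \in \mathcal S_\Omega$, write its Littlewood--Paley decomposition $f = \sum_j f_j$ with $f_j = f*\psi_j$, and set $F_j = \mathcal C(f_j)$, so that $F = \sum_j F_j$. For each fixed $y \in \Omega$, the Fourier transform in $x$ of $F_j(\cdot+iy)$ equals $\widehat{f_j}(\xi)\,e^{-(y|\xi)}$ and is therefore supported in $B_2(\xi_j)$, so hypothesis \eqref{4.12} applies pointwise in $y$ and yields
\[
\|F(\cdot+iy)\|_p^s \leq C \sum_j (Q^*)^{-\mu}(\xi_j)\,e^{\delta(\xi_j|\mathbf e)}\,\|F_j(\cdot+iy)\|_p^s.
\]

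I would then establish a convolution estimate $\|F_j(\cdot+iy)\|_p \lesssim e^{-c(y|\xi_j)}\|f_j\|_p$ for some absolute constant $c > 0$. This is done by expressing $F_j(\cdot+iy) = f_j *_x K_y^j$, with $\widehat{K_y^j}(\xi) = \widetilde\varphi_j(\xi)\,e^{-(y|\xi)}$ for a $C_c^\infty$ cutoff $\widetilde\varphi_j$ equal to $1$ on the support of $\widehat{f_j}$ and vanishing outside a slightly fattened ball around $\xi_j$; using the group element $h_j \in H'$ with $\xi_j = \pi(h_j)\mathbf e$ to change variables and rescale $K_y^j$ to a unit-scale kernel; and applying Young's inequality. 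Inserting this estimate, raising to the $q/s$ power and integrating against $Q^{\nu-\tau}(y)\,dy$ produces the $A_\nu^{p,q}$-norm and splits into two cases.

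In Case (i), $1 \leq q \leq s$, subadditivity of $t \mapsto t^{q/s}$ distributes the power inside the sum, and Lemma \ref{integ} evaluates $\int_\Omega e^{-cq(y|\xi_j)} Q^{\nu-\tau}(y)\,dy$ as a constant times $(Q^*)^{-\nu}(\xi_j)$; the strict upper bound $q < s \min_j (\nu_j - m_j/2)/\mu_j$ is precisely what allows the residual weight $(Q^*)^{-\nu - q\mu/s}(\xi_j)\,e^{\delta q(\xi_j|\mathbf e)/s}$ to be absorbed, after trading a small portion of the Laplace decay for a shifted application of Lemma~\ref{integ}, into $(Q^*)^{-\nu}(\xi_j)$ times a factor summable in $j$, reproducing the Besov norm. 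In Case (ii), $q > s$, I would apply H\"older's inequality to the $j$-sum with conjugate pair $(q/s, q/(q-s))$ and split the weight $(Q^*)^{-\mu}(\xi_j)\,e^{\delta(\xi_j|\mathbf e)}\,e^{-cs(y|\xi_j)}$ so that one factor, after integration in $y$ via Lemma \ref{integ}, produces the Besov weight $(Q^*)^{-\nu}(\xi_j)\|f_j\|_p^q$, while the other gives a $j$-sum that converges precisely when $q < s \min_j(\nu_j - m_j/2 + n_j/2)/(\mu_j + n_j/2)$; the auxiliary constraint $q < \widetilde q_{\nu,p}$ enters through Debertol's theorems to guarantee that the boundary distribution $f$ correctly represents $F$. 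The principal obstacle is the bookkeeping in Case (ii): the H\"older split must simultaneously balance the exponential $e^{\delta(\xi_j|\mathbf e)}$ against the Laplace decay $e^{-c(y|\xi_j)}$ so that neither the $j$-sum nor the $y$-integral diverges, while reproducing the exponents on $(Q^*)$ exactly so that the Besov norm appears cleanly; a secondary technicality is justifying Fubini--Tonelli via the density of $\mathcal D_\Omega$ in $B_\nu^{p,q}$ and the finiteness of the sums at each approximation step.
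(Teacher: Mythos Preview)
Your overall plan---apply the hypothesis \eqref{4.12} pointwise in $y$, then integrate---has a genuine gap, and it lies exactly where you flag ``the principal obstacle.'' After your first display you carry the factor $e^{\delta(\xi_j|\mathbf e)}$, which is \emph{exponential} growth in $\xi_j$ and is \emph{independent of $y$}. Your convolution estimate $\|F_j(\cdot+iy)\|_p\lesssim e^{-c(y|\xi_j)}\|f_j\|_p$ gives decay only in the pairing $(y|\xi_j)$; once you integrate out $y$ via Lemma~\ref{integ} you are left with purely polynomial weights $(Q^*)^{-\nu-\mu q/s}(\xi_j)$ multiplied by the unchanged exponential $e^{\delta q(\xi_j|\mathbf e)/s}$. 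No ``trading of a small portion of the Laplace decay'' can repair this: the portion you peel off from $e^{-cq(y|\xi_j)}$ still depends on $y$, and after the $y$-integration nothing remains to dominate an exponential in $(\xi_j|\mathbf e)$ for $y$ near the boundary of~$\Omega$.

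The paper avoids this by a two-step device you have omitted. First (Lemma~\ref{4.14}) it applies \eqref{4.12} \emph{only at the single point} $y=\eta\mathbf e$ with $\eta>\delta/\gamma$; at that point the kernel estimate $\|\mathcal F^{-1}(\widehat{\chi}_j e^{-\eta(\mathbf e|\cdot)})\|_1\lesssim e^{-\gamma\eta(\xi_j|\mathbf e)}$ kills the $e^{\delta(\xi_j|\mathbf e)}$ outright, and homogeneity under the group $H$ then transports the resulting bound to arbitrary $y$, producing the clean prefactor $Q^{-\mu/s}(y)$ with no exponential residue. Second, the paper uses a doubling trick: it estimates $\|F(\cdot+i2y)\|_p$ by applying Lemma~\ref{4.14} to one copy of $y$ (yielding $Q^{-\mu/s}(y)$) and the kernel bound to the other copy (yielding $e^{-\gamma(\xi_j|y)}$). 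Only then does one integrate in $y$ and split into the cases $q\le s$ and $q>s$; from that point on your sketch of the bookkeeping (subadditivity versus H\"older with an auxiliary exponent vector) matches the paper's argument. The missing ingredient is precisely this homogeneity-plus-doubling reduction that disposes of $e^{\delta(\xi_j|\mathbf e)}$ before any $y$-integration takes place.
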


 \begin{lem}\label{4.14}
 Let $1\leq p,\,s<\infty$ and assume that {\rm (\ref{4.12})} holds for some number $\da>0$ and some vector $\mu=(\mu_1,\ldots,\mu_r)\in\R^r.$    Then for every
 $f\in\mathcal{D}_{\O}$ and $y\in\O,$ the function $F(\cdot+iy)=\F^{-1}(\widehat{f}e^{-(y|\cdot)})$ belongs to $L^p(V).$ Moreover,
 \Bea\label{4.15}
 \|F(\cdot+iy)\|_p\lesssim Q^{-\frac \mu s}(y)\|f\|_{B_\mu^{p,s}}
 \Eea
 with constants independent of $f$ or $y\in\O.$
 \end{lem}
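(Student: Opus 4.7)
The plan is to combine the Littlewood--Paley decomposition of $f$ with hypothesis (\ref{4.12}) and Young's convolution inequality. Since $f \in \mathcal{D}_\Omega$ has $\widehat{f}$ of compact support in $\Omega$, the sum $f = \sum_j f*\psi_j$ reduces to a finite one. Setting
$$F_j(\cdot + iy) := \mathcal{F}^{-1}\bigl(\widehat{f*\psi_j}\, e^{-(y|\cdot)}\bigr),$$
each $F_j(\cdot+iy)$ is Fourier--supported in $B_2(\xi_j)$, and $F(\cdot+iy) = \sum_j F_j(\cdot+iy)$. Hypothesis (\ref{4.12}) then yields
$$\|F(\cdot+iy)\|_p \le C\Bigl[\sum_j (Q^*)^{-\mu}(\xi_j)\,e^{\delta(\xi_j|\mathbf{e})}\,\|F_j(\cdot+iy)\|_p^s\Bigr]^{1/s}.$$

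Next, to estimate each $\|F_j(\cdot+iy)\|_p$, I write $F_j(\cdot+iy) = (f*\psi_j) * G_{j,y}$ where $G_{j,y} = \mathcal{F}^{-1}(\tilde\chi_j\, e^{-(y|\cdot)})$, for a smooth cutoff $\tilde\chi_j(\xi) = \tilde\chi_0(g_j^{-1}\xi)$ equal to $1$ on the support of $\widehat{\psi_j}$ and supported in a slightly larger ball. By Young's inequality, $\|F_j(\cdot+iy)\|_p \le \|f*\psi_j\|_p\cdot\|G_{j,y}\|_1$. The change of variables $\xi = g_j\eta$ reduces $\|G_{j,y}\|_1$ to $\|\mathcal{F}^{-1}(\tilde\chi_0\, e^{-(g_j^*y|\cdot)})\|_1$, and integration by parts (using the smoothness of $\tilde\chi_0$) combined with the uniform pointwise bound $(g_j^*y|\eta)\ge c_0\,(y|\xi_j)$ on the support of $\tilde\chi_0$ yields $\|G_{j,y}\|_1 \le Ce^{-c(y|\xi_j)}$ for fixed $c, C > 0$.

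The final step is to exploit the simply transitive action of $H$ on $\Omega$. Writing $y = h\cdot\mathbf{e}$ and reindexing the lattice via a substitution $\xi_j \mapsto \xi_j'$ so that $(y|\xi_j) = (\mathbf{e}|\xi_j')$, the multiplicative relation (\ref{QjStar}) applied to $Q^*$ gives $(Q^*)^{-\mu}(\xi_j) = Q^{-\mu}(y)\,(Q^*)^{-\mu}(\xi_j')$. Substituting into the weighted sum above factors $Q^{-\mu}(y)$ outside, leaving a sum over the transformed lattice which is bounded by a constant multiple of $\|f\|_{B^{p,s}_\mu}^s$, giving the desired estimate.

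The hardest part is this last step: matching the exponential weight $e^{\delta(\xi_j|\mathbf{e})}$ from (\ref{4.12}) against the decay $e^{-cs(y|\xi_j)}$ from the kernel estimate, so as to cleanly yield $Q^{-\mu/s}(y)$. The $H$--equivariant reindexing aligns the two exponential arguments, and a suitable choice of the cutoff (making $c$ exceed $\delta/s$) then absorbs the exponential weight, leaving a finite sum comparable to the Besov seminorm of $f$.
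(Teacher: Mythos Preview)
Your first two paragraphs are essentially the paper's argument: decompose via the Littlewood--Paley pieces, apply the hypothesis (\ref{4.12}), and control each piece by Young's inequality together with the $L^1$ kernel bound (\ref{3.47}). That part is fine.

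The gap is in the final step. The reindexing you propose does not do what you claim. Writing $y=h\cdot\mathbf e$ and setting $\xi_j'=h^\star\xi_j$ does convert $(y|\xi_j)$ into $(\mathbf e|\xi_j')$, but it does \emph{not} convert the weight $e^{\delta(\xi_j|\mathbf e)}$ coming from (\ref{4.12}) into $e^{\delta(\xi_j'|\mathbf e)}$; that factor becomes $e^{\delta((h^\star)^{-1}\xi_j'|\mathbf e)}$, which for generic $y$ is not comparable to $e^{\delta(\xi_j'|\mathbf e)}$. So the two exponentials do not align, and no choice of cutoff fixes this: the decay constant $c$ in $\|G_{j,y}\|_1\le Ce^{-c(y|\xi_j)}$ is a fixed geometric constant (the $1/\gamma$ of (\ref{3.47})), not a free parameter. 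Moreover, even if the exponentials cancelled, the remaining sum $\sum_j (Q^*)^{-\mu}(\xi_j')\|f*\psi_j\|_p^s$ is not the Besov seminorm of $f$, since the weight sits at $\xi_j'$ while the Littlewood--Paley piece is still $f*\psi_j$.

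The paper avoids this entirely by invoking homogeneity \emph{first}: both sides of (\ref{4.15}) transform compatibly under the action of $H$ (this is \cite[Proposition 3.19]{DD}), so it suffices to prove the estimate at a single point $y=\eta\mathbf e$. At such a point both exponentials involve $(\xi_j|\mathbf e)$ from the outset, and one simply chooses the scalar $\eta>\delta/\gamma$ to absorb the growing weight. The freedom is in $\eta$, not in the cutoff. I recommend you replace your reindexing paragraph with this homogeneity reduction.
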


In the proofs, we denote $\{\chi_j\}$ a family of functions defined as $\widehat \chi_j (\xi):=\widehat \chi (g_j^{-1} (\xi)$ from an arbitrary $\widehat \chi\in C_c^\infty (B_4 (\mathbf e))$ so that $0\leq \widehat \chi \leq 1$ and $\widehat \chi$ is identically 1 in $B_2 (\mathbf e).$ We shall use the following estimate (formula (3.47) of \cite{BBGR}): there exist two positive numbers $C$ and $\gamma$ such that
\begin{equation}\label{3.47}
||\mathcal F^{-1} (\widehat \chi_j e^{-(y\vert \cdot)}||_1 \leq Ce^{-\frac {(g_j y\vert \mathbf e)}\gamma}
\end{equation}

 \begin{proof}[Proof of Lemma 6.4]
\hskip 2truemm By homogeneity (see \cite[Proposition 3.19 ]{DD}), it is sufficient to prove (\ref{4.15}) when $y=\eta{\bf e},$ for some fixed $\eta>0$ to be chosen below.
 Let us denote $\widehat{g}=\widehat{f}e^{-\eta({\bf e}|\cdot)},$ so that $g=\sum\limits_jg*\psi_j\in\mathcal{S}_{\O}.$ Applying (\ref{4.12}) to the sequence $\{f_j = g*\psi_j\}$
 and using the Young inequality, we obtain
 \Beas
 \|F(\cdot+i\eta{\mathbf e})\|_p &=&\|g\|_p=\|\sum_jg*\psi_j\|_p\\
 &\lesssim& \left[\sum_j(Q^*)^{-\mu}(\xi_j)e^{\da(\xi_j|{\bf e})}\|(g*\psi_j\|^s_p\right]^\frac{1}{s} \\
 &\lesssim& \left[\sum_j(Q^*)^{-\mu}(\xi_j)e^{\da(\xi_j|{\bf e})}\|\F^{-1}(\widehat{f}\hskip 1truemm \widehat{\psi_j}e^{-\eta({\bf e}|\cdot)})\|^s_p\right]^\frac{1}{s}\\
 &\lesssim& \left[\sum_j(Q^*)^{-\mu}(\xi_j)e^{\da(\xi_j|{\bf e})}\|f*\psi_j\|_p^s\|\F^{-1}(e^{-\eta({\bf e}|\cdot)}\widehat{\chi_j})\|^s_1\right]^\frac{1}{s}.\\
 \Eeas
 Now, $\|\F^{-1}(e^{-\eta({\mathbf e}|\cdot)}\widehat{\chi_j})\|_1$ is bounded by a constant times $e^{-\ga\eta(\xi_j|{\bf e})}$ by formula (\ref
{3.47}). Therefore,
 \Beas
\|F(\cdot+i\eta{\mathbf e})\|_p&\lesssim& \left[\sum_j(Q^*)^{-\mu}(\xi_j)e^{\da(\xi_j|{\bf e})-\ga\eta(\xi_j|{\bf e})}\|f*\psi_j\|_p^s\right]^\frac{1}{s}
 \Eeas
 We only need to choose $\eta$ larger than $\da/\ga.$
\end{proof}

We now conclude the proof of Theorem \ref{4.11}. Given $f\in\mathcal{D}_{\O}$ and $F:=\mathcal C f,$ Lemma \ref{4.14} applied to
$\F^{-1}(\widehat{f}e^{-(y|{\bf e}}))$ gives us
\Beas
\|F(\cdot+i2y)\|_p &\lesssim& Q^{-\frac \mu s}(y)\left[\sum_j(Q^*)^{-\mu}(\xi_j)\|\F^{-1}(\widehat{f}\hskip 1truemm \widehat{\psi_j}e^{-(y|\cdot)})\|^s_p\right]^\frac{1}{s}\\
 &\lesssim&Q^{-\frac \mu s}(y)\left[\sum_j(Q^*)^{-\mu}(\xi_j)e^{-\ga(\xi_j|y)}\|f*\psi_j\|_p^s\right]^\frac{1}{s},
\Eeas
where we have used formula (\ref{3.47}) and Young's inequality again. Thus
\Beas
I&:=&\int_\O\|F(\cdot+i2y)\|_p^qQ^{\nu-\frac nr}(y)dy\\
&\lesssim&\int_\O Q^{-\frac{\mu q}{s}}(y)\left[\sum_j(Q^*)^{-\mu}(\xi_j)e^{-\ga(\xi_j|y)}\|f*\psi_j\|_p^s\right]^\frac{q}{s}Q^{\nu-\frac nr}(y)dy.
\Eeas

When $q/s\leq 1$ i.e. $q\leq s,$ then
\Beas
I&\lesssim&\int_\O Q^{-\frac{\mu q}{s}}(y)\sum_j(Q^*)^{-\mu\frac{q}{s}}(\xi_j)e^{-\ga\frac{q}{s}(\xi_j|y)}\|f*\psi_j\|_p^qQ^{\nu-\frac nr}(y)dy\\
&\lesssim&\sum_j(Q^*)^{-\mu\frac{q}{s}}(\xi_j)\|f*\psi_j\|_p^q\int_\O e^{-\ga\frac{q}{s}(\xi_j|y)}Q^{-\frac{\mu q}{s}+\nu-\frac nr}(y)dy\\
&\lesssim&\sum_j(Q^*)^{-\mu\frac{q}{s}+\frac{\mu q}{s}-\nu}(\xi_j)\|f*\psi_j\|_p^q=\sum_j(Q^*)^{-\nu}(\xi_j)\|f*\psi_j\|_p^q
\Eeas
provided $-\mu_j\frac{q}{s}+\nu_j>\frac{m_j}{2},\,\,j=1,\ldots,r$ thanks to Lemma \ref{integ}. This leads to  condition (i).

Assume now that $q/s> 1$ i.e. $q>s.$ Let $\beta=(\ba_1,\ldots,\ba_r)\in\R^r$ be a vector to be chosen below. We have
\Beas
I&\lesssim&\int_\O Q^{-\frac{\mu q}{s}}(y)\left[\sum_j(Q^*)^{-\mu-\ba}(\xi_j)\|f*\psi_j\|_p^s(Q^*)^{\ba}(\xi_j)e^{-\ga(\xi_j|y)}\right]^\frac{q}{s}Q^{\nu-\frac nr}(y)dy
\Eeas
Then by H\"older's inequality,
\Bea
\left[\sum_j(Q^*)^{-\mu-\ba}(\xi_j)\|f*\psi_j\|_p^s(Q^*)^{\ba}(\xi_j)e^{-\ga(\xi_j|y)}\right]^\frac{q}{s}\leq\\{}{}\nonumber
\left[\sum_j(Q^*)^{-\frac{q}{s}(\mu+\ba)}(\xi_j)\|f*\psi_j\|_p^qe^{-\ga(\xi_j|y)}\right]
\left[\sum_j(Q^*)^{\ba\left(\frac{q}{s}\right)'}(\xi_j)e^{-\ga(\xi_j|y)}\right]^{\left(\frac{q}{s}\right)/\left(\frac{q}{s}\right)'}.
\Eea
But
$$\sum_j(Q^*)^{\ba(\frac{q}{s})'}(\xi_j)e^{-\ga(\xi_j|y)}\sim\int_{\O^*}e^{-\ga(\xi|y)}(Q^*)^{\ba\left(\frac{q}{s}\right)'-\frac nr}(\xi)d\xi
\sim Q^{-\ba\left(\frac{q}{s}\right)'}(y)$$
provided $\ba_j \left(\frac{q}{s}\right)'>\frac{n_j}{2},\,\,j=1,\ldots,r$ thanks to Lemma \ref{integ}. It follows that
\Beas
I&\lesssim& \int_\O Q^{-\frac{\mu q}{s}-\ba\left(\frac{q}{s}\right)'}(y)\left[\sum_j(Q^*)^{-\frac{q}{s}(\mu+\ba)}(\xi_j)\|f*\psi_j\|_p^qe^{-\ga(\xi_j|y)}\right]Q^{\nu-\frac nr}(y)dy \\
&\lesssim&\sum_j(Q^*)^{-\frac{q}{s}(\mu+\ba)}(\xi_j)\|f*\psi_j\|_p^q \int_\O e^{-\ga(\xi_j|y)}Q^{-\frac{q}{s}(\mu+\ba)+\nu-\frac nr}(y)dy\\
\Eeas
 The last integral converges if $-\frac{q}{s}(\mu_j+\ba_j)+\nu_j>\frac{m_j}{2},\,\,j=1,\ldots,r$ thanks to Lemma \ref{integ}. It follows that
 \Beas
  I&\lesssim&\sum_j(Q^*)^{-\nu}(\xi_j)\|f*\psi_j\|_p^q
 \Eeas
 whenever $\ba_j \left(\frac{q}{s}\right)'>\frac{n_j}{2},\,\,j=1,\ldots,r$ and $-\frac{q}{s}(\mu_j+\ba_j)+\nu_j>\frac{m_j}{2},\,\,j=1,\ldots,r.$ That is
 we will conclude with $I:=\int_\O\|F(\cdot+i2y)\|_p^qQ^{\nu-\tau}(y)dy\lesssim\|f\|_{B_\nu^{p,q}}$ if we can choose real numbers $\ba_j$ so that
 $$\frac{n_j}{2}\left(1-\frac{1}{q/s}\right)<\ba_j<\frac{1}{q/s}\left(\nu_j-\frac{m_j}{2}\right)-\mu_j,\,\,j=1,\ldots,r.$$
 Solving for $q/s$ we get
 $$\frac qs<\min_{j=1,\ldots,r}\frac{\nu_j-\frac{m_j}{2}+\frac{n_j}{2}}{\mu_j+\frac{n_j}{2}}$$
 which gives condition (ii).




From Theorems 6.1 and 6.2, we deduce the following corollary for tube domains over symmetric cones.

\begin{cor}
Let $\nu=(\nu_1,\cdots,\nu_r)\in \mathbb R^r$ such that $\nu_j >\frac {m_j}2, \hskip 2truemm j=1,\cdots,r,$  $1<p<p_\nu$ and $1<s<\infty.$ Assume further that there is a positive number $\delta$ and a vector $\mu = (\mu_1,\cdots,\mu_r)\in \mathbb R^r$ with $\mu_j >0, \hskip 2truemm j=1,\cdots,r$ and a constant $C=C(\mu, \delta)>0$ such that 
$$\left\|\sum_j f_j\right\|_p\leq C\left[\sum_j(Q^*)^{-\mu}(\xi_j)e^{\da(\xi_j|{\bf e})}\|f_j\|^s_p\right]^\frac{1}{s}$$
holds for every finite sequence $\{f_j\}\in L^p (V)$ with $\mbox Supp \hskip 2truemm \widehat f_j \in B_2 (\xi_j).$ We assume that for the index $q,$ we are in one of the following two situations.
\begin{enumerate}
\item[(a)]
If $q'_\nu (p) < s,$ then there are two cases:
\begin{enumerate}
\item[(i)]
$q'_\nu (p) < q \leq s$ and $q < \min \left \{s\min \limits_{\hskip 2truemm j=1,\cdots,r} \frac {\nu_j -\frac {m_j}2}{\mu_j},  \widetilde q_{\nu, p}\right \};$
\item[(ii)]
$s < q < \min \left \{s\min \limits_{\hskip 2truemm j=1,\cdots,r} \frac {\nu_j - \frac {m_j}2 +\frac {n_j}2}{\mu_j +\frac {n_j}2},  \widetilde q_{\nu, p}\right \}.$
\end{enumerate}
\item[(b)]
If $q'_\nu (p) \geq s,$ then $$q'_\nu (p) <q< \min \left \{s\min \limits_{\hskip 2truemm j=1,\cdots,r} \frac {\nu_j - \frac {m_j}2 +\frac {n_j}2}{\mu_j +\frac {n_j}2}, \widetilde q_{\nu, p}\right \}.$$
\end{enumerate}
Then the Bergman projector $P_\nu$ admits a bounded extension from $L^{p, q}_\nu (T_\Omega)$ to $A^{p, q}_\nu (T_\Omega).$
\end{cor}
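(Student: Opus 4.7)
The plan is to assemble the three preceding results into an isomorphism statement for the operator $\mathcal{C}$ and then invoke Debertol's criterion (Theorem 6.2). First, I would start with an arbitrary $F\in A^{p,q}_\nu(T_\Omega)$ and apply Theorem 6.1 to write $F=\mathcal{C}f$ with $f\in B^{p,q}_\nu$ satisfying $\|f\|_{B^{p,q}_\nu}\lesssim\|F\|_{A^{p,q}_\nu}$. This step requires only $1<p<\infty$, $\nu_j>m_j/2$ and the upper bound $q<\widetilde q_{\nu,p}$, all of which are explicitly built into the corollary's hypotheses.

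Next, for the reverse direction, I would apply Theorem 6.3 (the analogue of Theorem 4.11) to the hypothesis (\ref{4.12}) to obtain the bound $\|\mathcal{C}f\|_{A^{p,q}_\nu(T_\Omega)}\lesssim \|f\|_{B^{p,q}_\nu}$ for every $f\in \mathcal{S}_\Omega$. The key point is to check that each of the $q$-ranges listed in the corollary is contained in the union of the two cases (i)–(ii) of Theorem 6.3. In case (a), the subcase $q\leq s$ lands in Theorem 6.3(i) while $q>s$ lands in Theorem 6.3(ii); in case (b), the assumption $q'_\nu(p)\geq s$ forces $q>s$, so one is automatically in Theorem 6.3(ii). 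The density of $\mathcal{D}_\Omega$ (and hence $\mathcal{S}_\Omega$) in $B^{p,q}_\nu$, noted just before Theorem 6.1, then extends this bound to the whole Besov space.

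Combining the two estimates gives that $\mathcal{C}:B^{p,q}_\nu\to A^{p,q}_\nu(T_\Omega)$ is a topological isomorphism. Theorem 6.2 then immediately converts this isomorphism into the boundedness of the weighted Bergman projector $\mathbb{P}_\nu$ from $L^{p,q}_\nu(T_\Omega)$ onto $A^{p,q}_\nu(T_\Omega)$. One checks that the remaining hypotheses of Theorem 6.2, namely $1<p<p_\nu$ and $q'_\nu(p)<q<\widetilde q_{\nu,p}$, are all built into the statement.

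The one real obstacle is bookkeeping on the exponents: one must verify that the three disjoint $q$-regimes in the corollary exhaust, and are compatible with, the conjunction of the constraints $q<\widetilde q_{\nu,p}$ (from Theorem 6.1), $q'_\nu(p)<q<\widetilde q_{\nu,p}$ (from Theorem 6.2), and the two alternative thresholds $q<s\min_j(\nu_j-m_j/2)/\mu_j$ versus $q<s\min_j(\nu_j-m_j/2+n_j/2)/(\mu_j+n_j/2)$ that govern cases (i) and (ii) of Theorem 6.3. Apart from this careful matching of parameter intervals, no new analytic input is needed.
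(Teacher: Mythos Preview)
Your proposal is correct and mirrors the paper's own argument: the corollary is stated immediately after the proof of Theorem~6.3, and the paper simply records that it follows ``from Theorems 6.1 and 6.2'' (with Theorem~6.3 implicitly in play, since the conditions (i)--(ii) of the corollary are literally those of Theorem~6.3). Your explicit breakdown---Theorem~6.1 for $\|f\|_{B^{p,q}_\nu}\lesssim\|F\|_{A^{p,q}_\nu}$ and surjectivity, Theorem~6.3 for $\|\mathcal{C}f\|_{A^{p,q}_\nu}\lesssim\|f\|_{B^{p,q}_\nu}$, and then Theorem~6.2 to convert the isomorphism into boundedness of $\mathbb{P}_\nu$---together with the bookkeeping on the $q$-intervals, is exactly what the paper leaves to the reader.
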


\subsection{The proof of Theorem 2.3}
We refer to \cite[section 5]{BBGR}. Let $\Omega = \Lambda_n$ denote the Lorentz cone in $\mathbb R^n, \hskip 2truemm n\geq 3,$ defined by
$$\Lambda_n = \{y=(y_1,y') \in \mathbb R^n: \hskip 1truemm \Delta_1 (y) >0, \hskip 2truemm \Delta_2 (y) >0\},$$
with $\Delta_1 (y)=y_1$ and $\Delta_2 (y) =y_1^2 -|y'|^2.$ The rank of $\Lambda_n$ is $r=2.$ For $j\geq 1,$ take a maximal $2^{-j}$-separated sequence $\{\omega_k^{(j)}\}_{k=1}^{k_j}$ of points of the sphere $\mathbb S^{n-2} \subset \mathbb R^{n-1},$ with respect to the Euclidean distance (so that $k_j \sim 2^{j(n-2)}).$ Then define the sets\\
$$E_{j, k} =$$
$$ \left \{(\xi_1, \xi')\in \Lambda_n: 2^{-1} <\tau <2, \hskip 2truemm 2^{-2j-2} <1-\frac {|\xi'|^2}{\xi_1^2} <2^{-2j+2}
{\rm and} \hskip 1truemm \left |\frac {\xi'}{|\xi'|} - \omega_k^{(j)}\right | \leq \delta 2^{-j} \right \}$$
where the constant $\delta >0$ is suitably chosen \cite{BBGR}.

Recall that for $\Lambda_n$ we have $r=2,\,\,m_j=(2-j)d,\,\,n_j=(j-1)d$ with $d=n-2.$ Thus for $\nu=(\nu_1,\nu_2)\in\mathbb{R}^2$
such that $\nu_1>\frac n2-1,\,\,\nu_2>0,$ we have
$$p_\nu =1+\frac {\nu_2 +\frac n2}{\left(\frac n2 -1 -\nu_2\right)_+};$$
$$q_\nu= 1+\min \limits_{1\leq i \leq 2} \frac {\nu_i - \frac {m_i}2}{\frac {n_i}2}=1+\frac {\nu_2}{\frac {n}2-1};$$
$$q'_\nu (p)= \frac {\nu_2 +\frac n2 -1}{\nu_2 +\left(1-\frac 1{p_\sharp}\right)\left(\frac n2 -1\right)};$$
$$\tilde q_{\nu, p} =\min \limits_{1\leq i \leq 2} \frac {\nu_j + (j-1)\frac d2}{\left(\frac n{2p'} -1-(2-j)\frac d2\right)_+}
=\frac {\nu_2 + \frac n2-1}{\left(\frac n{2p'} -1\right)_+}.$$
The following theorem is a consequence of \cite[Proposition 5.5]{BBGR} and Corollary 6.5 above.

\begin{thm}\label{main}
Let $1\leq p < p_\nu, \hskip 2truemm 1\leq s <\infty.$ Suppose that for some $\mu \geq 0$ 
there exists a constant $C_{\mu}$ such that
\begin{equation}\label{dec}
\left \Vert\sum_{k=1}^{k_j} f_k \right \Vert \leq  C_{\mu} 2^{\frac {2j{\mu}}{s}}\left [\sum_{k=1}^{k_j} \Vert f_k \Vert_p^s\ \right ]^{\frac 1s}
\quad {\rm for \hskip 2truemm all} \hskip 2truemm j\geq 1,
\end{equation}
for every sequence $\{f_k\}$ satisfying $Supp \hskip 1truemm \widehat f_k \subset E_{j, k}.$
We assume that for the index $q,$ we are in one of the following two situations.
\begin{enumerate}
\item[(a)]
If $q'_\nu (p) < s,$ then there are two cases:
\begin{enumerate}
\item[(i)]
$q'_\nu (p) < q \leq s$ and $q < \min \left \{s\min \limits_{\hskip 2truemm j=1,\cdots,r} \frac {\nu_j -\frac {m_j}2}{\mu},  \widetilde q_{\nu, p}\right \};$
\item[(ii)]
$s < q < \min \left \{s\min \limits_{\hskip 2truemm j=1,\cdots,r} \frac {\nu_j - \frac {m_j}2 +\frac {n_j}2}{\mu+\frac {n_j}2},  \widetilde q_{\nu, p}\right \}.$
\end{enumerate}
\item[(b)]
If $q'_\nu (p) \geq s,$ then $$q'_\nu (p)  < q< \min \left \{s\min \limits_{\hskip 2truemm j=1,\cdots,r} \frac {\nu_j - \frac {m_j}2 +\frac {n_j}2}{\mu +\frac {n_j}2}, \widetilde q_{\nu, p}\right \}.$$
\end{enumerate}
Then $P_\nu$ is bounded in $L^{p, q}_\nu.$
\end{thm}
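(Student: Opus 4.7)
The plan is to deduce Theorem 6.6 from Corollary 6.5 (applied to the symmetric cone $\Lambda_n$), with \cite[Proposition 5.5]{BBGR} serving as the bridge that converts the scalar dyadic decoupling hypothesis (\ref{dec}) into the vector-weighted lattice decoupling required by Corollary 6.5. The core observation is that the sets $E_{j,k}$ are precisely the pieces of a natural dyadic decomposition of $\Lambda_n$ adapted to its parabolic rescaling, and each $E_{j,k}$ contains $O(1)$ points of the $(1/2,2)$-invariant lattice $\{\xi_i\}$ on $\Lambda_n$. Thus, up to harmless multiplicities, one can interchangeably index the decoupling over $\{E_{j,k}\}_{j,k}$ or over $\{\xi_i\}$.

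The second step is the choice of the exponent vector. For $\xi \in E_{j,k}$, one has $\xi_1 \sim 1$ and $\Delta_2(\xi) \sim 2^{-2j}$, whence $Q_1^*(\xi) \sim 1$ and $Q_2^*(\xi) \sim 2^{-2j}$. Therefore the natural candidate is $\mu' = (\mu,\mu) \in \mathbb{R}^2$, for which
$$(Q^*)^{-\mu'}(\xi_i) \sim 2^{2j(i)\mu}$$
where $j(i)$ denotes the dyadic shell containing $\xi_i$. The role of \cite[Proposition 5.5]{BBGR} is precisely to perform the remaining radial summation: the hypothesis (\ref{dec}) holds on a fixed scale $\xi_1 \sim 1$, and one must sum over the scales $\xi_1 \sim 2^\ell$, $\ell \in \mathbb{Z}$. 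The dilation invariance of $\Lambda_n$ transports (\ref{dec}) to each radial shell, and the summation over $\ell$ is absorbed by the factor $e^{\delta(\xi_i \vert \mathbf{e})}$, yielding
$$\Bigl\|\sum_i f_i\Bigr\|_p \leq C \Bigl[\sum_i (Q^*)^{-\mu'}(\xi_i)\, e^{\delta(\xi_i \vert \mathbf{e})}\, \|f_i\|_p^s\Bigr]^{1/s}$$
for functions $f_i$ with $\operatorname{Supp} \widehat{f_i} \subset B_2(\xi_i)$, which is exactly the hypothesis of Corollary 6.5.

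Once this vector decoupling is in hand, Corollary 6.5 with $\mu' = (\mu,\mu)$ applies and produces boundedness of $P_\nu$ on $L^{p,q}_\nu(T_{\Lambda_n})$. The main obstacle—really a careful bookkeeping step—is to verify that the index conditions in Corollary 6.5 specialize to those stated in Theorem 6.6. Substituting $m_1 = n_2 = n-2$, $m_2 = n_1 = 0$, and $\mu'_1 = \mu'_2 = \mu$, the quantities $\frac{\nu_j - m_j/2}{\mu'_j}$ and $\frac{\nu_j - m_j/2 + n_j/2}{\mu'_j + n_j/2}$ from Corollary 6.5 become, respectively, $\frac{\nu_j - m_j/2}{\mu}$ and $\frac{\nu_j - m_j/2 + n_j/2}{\mu + n_j/2}$, which match the expressions in Theorem 6.6 term for term. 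The delicate cases are $\mu = 0$ (to be interpreted as a classical orthogonality estimate and handled separately) and the verification that $\widetilde{q}_{\nu,p}$ computed in the rank-two setting coincides with the explicit Lorentz-cone formula $(\nu_2 + n/2 - 1)/(n/(2p') - 1)_+$ recorded before the statement of Theorem 6.5, which is a direct computation using $r=2$, $d=n-2$.
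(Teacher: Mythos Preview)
Your proposal is correct and follows exactly the route the paper indicates: the paper's entire ``proof'' is the sentence ``The following theorem is a consequence of \cite[Proposition 5.5]{BBGR} and Corollary 6.5 above,'' and you have simply spelled out how these two ingredients combine (via the choice $\mu'=(\mu,\mu)$ and the matching of index conditions for $r=2$, $m_1=n_2=n-2$, $m_2=n_1=0$). There is nothing to add.
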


The following theorem is a consequence of the $l^2$-decoupling theorem recently proved by Bourgain and Demeter \cite{BD}.

\begin{thm}\label{bourgain}
The estimate {\rm (\ref{dec})} is valid for $s=2$ and the following values of $p$ and $\mu:$
\begin{enumerate}
\item
$2\leq p \leq \frac {2n}{n-2}$ and $\mu=0;$
\item
$p\geq \frac {2n}{n-2}$ and $\mu =\frac {n-2}2 - \frac np.$
\end{enumerate}
\end{thm}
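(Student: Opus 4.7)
My plan is to recognize the inequality \eqref{dec} as a direct instance of the $\ell^2$-decoupling theorem for the truncated light cone, due to Bourgain and Demeter, applied to the canonical BD-caps $\{E_{j,k}\}$. The first step is to verify the geometry of the sets $E_{j,k}$. Since $\Lambda_n$ is self-dual, $E_{j,k}$ lies inside $\Lambda_n$, localized to $\{1/2<\xi_1<2\}$ and to the dyadic annulus $\{\xi_1-|\xi'|\sim 2^{-2j}\}$ next to the boundary cone $\partial\Lambda_n$. A direct computation shows that each $E_{j,k}$ is, up to multiplicative constants, a curved box of dimensions $O(1)$ in the radial direction $\xi_1$, $O(2^{-2j})$ in the direction normal to the cone, and $O(2^{-j})$ in each of the $n-2$ angular directions tangent to $\mathbb{S}^{n-2}$. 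Modulo fixed rescaling, this is exactly the shape of the standard plates that arise in the $\ell^2$-decoupling of a $\delta=2^{-2j}$ neighborhood of the truncated cone.

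The second step is to invoke the $\ell^2$-decoupling theorem for the cone from \cite{BD}: for every $\epsilon>0$ and every $f=\sum_k f_k$ with $\widehat{f_k}$ supported in the $k$-th plate,
\[
\Bigl\|\sum_k f_k\Bigr\|_p \;\lesssim_\epsilon\; 2^{j\epsilon}\,2^{j\mu(p)}\,\Bigl(\sum_k\|f_k\|_p^2\Bigr)^{1/2},
\]
where $\mu(p)=0$ on the subcritical range $2\le p\le 2n/(n-2)$, and $\mu(p)=\tfrac{n-2}{2}-\tfrac{n}{p}$ above it. The critical exponent $2n/(n-2)$ is the Stein--Tomas exponent for $\mathbb{S}^{n-2}\subset\mathbb{R}^{n-1}$, which is the correct critical index because the curvature of the light cone in $\mathbb{R}^n$ is effectively carried by its spherical cross-section. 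The loss $2^{j\mu(p)}$ in the supercritical range is sharp, as confirmed by a Knapp example on the tangent hyperplane to the cone.

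The third step is a cosmetic one: the factor $2^{j\epsilon}$ in the BD inequality is absorbed, since the conditions on $(p,q)$ in Theorem \ref{main} that will use Theorem \ref{bourgain} are all strict, so enlarging $\mu$ by an arbitrarily small amount is harmless. Setting $s=2$ and reading $\mu$ off the two cases yields precisely the statement.

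The main obstacle is not conceptual but bookkeeping: the BD theorem is usually stated for functions whose Fourier support lies in a $\delta$-neighborhood of a fixed compact piece of the cone, whereas here the supports $E_{j,k}$ sit \emph{on} a dyadic piece of $\Lambda_n$ itself at scale $2^{-2j}$. Translating between the two forms involves a parabolic rescaling (aligning the normal direction with a parameter of size $\delta=2^{-2j}$ and the tangential angular directions with parameters of size $\sqrt{\delta}=2^{-j}$) together with an affine change of variables in frequency, which preserves the $\ell^2$-decoupling inequality and the critical exponent. Once this normalization is carried out carefully, the stated values of $\mu$ follow directly from \cite{BD}.
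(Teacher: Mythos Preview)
Your proposal is correct and matches the paper's own treatment: the paper gives no proof at all, merely stating that the theorem ``is a consequence of the $l^2$-decoupling theorem recently proved by Bourgain and Demeter \cite{BD}.'' You have correctly supplied the missing details---identifying the sets $E_{j,k}$ as the standard $\delta=2^{-2j}$ plates on the truncated cone, reading off the critical exponent $2n/(n-2)$ and the supercritical loss $\mu(p)=\tfrac{n-2}{2}-\tfrac{n}{p}$, and observing that the $2^{j\epsilon}$ loss in \cite{BD} is harmless because the subsequent applications in Theorem~\ref{main} involve only open conditions on $q$.
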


We deduce the following corollary.

\begin{cor}\label{resultat}
Let $n\geq 3$ and $\nu=(\nu_1,\nu_2)\in \mathbb{R}^2$ such that $\nu_1>\frac n2-1,\,\,\nu_2>0.$
The weighted Bergman projector $P_\nu$ is bounded in $L^{p, q}_\nu(T_{\Lambda_n})$ for the following values of $p, q$ and $\nu:$
\begin{enumerate}
\item
$2\leq p \leq \frac {2n}{n-2}, \hskip 2truemm p<p_\nu$ and $q'_\nu (p) < q < 2q_\nu;$
\item
$p_\nu >p  \geq \frac {2n}{n-2}, \hskip 2truemm q'_\nu (p) < q<\min \left \{2\frac {\nu_1 - \frac n2 +1}{\frac n2 -1 -\frac np},  \widetilde q_{\nu, p}\right \}$ provided $0 < \nu_2 <  \frac n2 -1;$
\item
$p_\nu >p  > \frac {2n}{n-2}$ and $2 < q< \min \left \{2\frac {\nu_1 - \frac n2 +1}{\frac n2 -1 -\frac np},    \widetilde q_{\nu, p}\right \}$ provided  $\nu_2 \geq \frac n2 -1.$
\end{enumerate}
\end{cor}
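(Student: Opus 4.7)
The plan is to combine Theorem \ref{main} (which reduces the $L^{p,q}_\nu$ boundedness of $P_\nu$ on $T_{\Lambda_n}$ to the decoupling-type inequality (\ref{dec}) for the wedges $E_{j,k}$) with the $\ell^2$-decoupling result of Bourgain--Demeter stated as Theorem \ref{bourgain}. The latter supplies (\ref{dec}) with $s=2$ in exactly two regimes: the pair $(p,0)$ for $2\leq p\leq \tfrac{2n}{n-2}$, and the pair $\bigl(p,\tfrac{n-2}{2}-\tfrac{n}{p}\bigr)$ for $p\geq \tfrac{2n}{n-2}$. The three cases of the corollary correspond to plugging these two pairs into Theorem \ref{main} and identifying which of its subcases (a)(i), (a)(ii), (b) is active, subject to the standing restriction $p<p_\nu$ inherited from the hypotheses.

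As a preliminary step I would specialize the various minima over $j$ in Theorem \ref{main} to the Lorentz cone $\Lambda_n$, where $r=2$ with $m_1=n_2=n-2$ and $m_2=n_1=0$. A key algebraic observation, obtained by direct computation, is that for $\mu=\tfrac{n-2}{2}-\tfrac{n}{p}$ the $j=2$ term $s\,\frac{\nu_2-m_2/2+n_2/2}{\mu+n_2/2}$ appearing in the upper-bound min of subcases (a)(ii) and (b) coincides exactly with $\widetilde q_{\nu,p}$. Consequently, in that min one needs to track only the $j=1$ contribution, which produces (after multiplying by $s=2$) the quantity $2(\nu_1-\tfrac{n}{2}+1)/(\tfrac{n}{2}-1-\tfrac{n}{p})$ appearing in the statement. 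For the other Bourgain--Demeter pair $\mu=0$ one similarly checks that the $j=1$ terms in both minima are infinite, while the $j=2$ term in the second min collapses to $q_\nu$.

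For case (1), take $s=2$ and $\mu=0$. By the preliminary step, the upper bound from Theorem \ref{main} becomes $\min\bigl\{2q_\nu,\widetilde q_{\nu,p}\bigr\}$; a short computation shows $\widetilde q_{\nu,p}\geq 2q_\nu$ throughout $2\leq p\leq \tfrac{2n}{n-2}$, with equality at the right endpoint, so the binding constraint is $2q_\nu$. Glueing subcases (a)(i), (a)(ii) and (b) then yields the single admissible range $q'_\nu(p)<q<2q_\nu$, uniformly in whether $q'_\nu(p)$ is less than or greater than $s=2$.

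For cases (2) and (3), take $s=2$ and $\mu=\tfrac{n-2}{2}-\tfrac{n}{p}>0$. By the preliminary step, the upper bound from Theorem \ref{main} simplifies to $\min\bigl\{2(\nu_1-\tfrac{n}{2}+1)/(\tfrac{n}{2}-1-\tfrac{n}{p}),\,\widetilde q_{\nu,p}\bigr\}$, exactly as claimed. The split between case (2) (with lower bound $q>q'_\nu(p)$) and case (3) (with lower bound $q>2$) reflects the dichotomy $q'_\nu(p)\gtrless s=2$ in Theorem \ref{main}: for $\nu_2\geq \tfrac{n-2}{2}$ one has $q_\nu\geq 2$ and hence $q'_\nu(p)\leq 2$, so subcase (a)(ii) gives the lower bound $q>s=2$; for $0<\nu_2<\tfrac{n-2}{2}$ one works in the subregion of $p$ in which $q'_\nu(p)\geq 2$ and subcase (b) applies with lower bound $q>q'_\nu(p)$. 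The main obstacle is the book-keeping: verifying that the algebraic identity for the $j=2$ term truly absorbs it into $\widetilde q_{\nu,p}$, and pinning down precisely which of the three subcases of Theorem \ref{main} governs each region of parameters so that the stated ranges come out exactly.
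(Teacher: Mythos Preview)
Your approach is exactly the paper's: apply Theorem~\ref{bourgain} to feed Theorem~\ref{main} with $s=2$ and $\mu\in\{0,\tfrac{n-2}{2}-\tfrac np\}$, and then simplify the minima over $j=1,2$. Your algebraic observations are correct; in particular the identity
\[
2\,\frac{\nu_2+\frac{n_2}{2}}{\mu+\frac{n_2}{2}}
= \frac{2\bigl(\nu_2+\tfrac n2-1\bigr)}{n-2-\tfrac np}
= \widetilde q_{\nu,p}
\quad\text{for }\mu=\tfrac{n-2}{2}-\tfrac np
\]
is exactly what the paper uses (without stating it explicitly) to reduce the upper bound to the displayed $\min$.

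One point in your case split is slightly off and would leave a gap if left as written. You propose that case~(2) of the corollary ($0<\nu_2<\tfrac n2-1$) corresponds entirely to subcase~(b) of Theorem~\ref{main}, i.e.\ to the region where $q'_\nu(p)\geq 2$. That is not true across the whole range $\tfrac{2n}{n-2}\leq p<p_\nu$: the paper shows (equation~(41) there) that $q'_\nu(p)<2$ precisely when $\nu_2>(\tfrac n2-1)(1-\tfrac2p)$, and for $p$ only slightly larger than $\tfrac{2n}{n-2}$ this threshold is close to $\tfrac{n-2}{n}$, well below $\tfrac n2-1$. So for those $(\nu_2,p)$ you must invoke subcases~(a)(i) and~(a)(ii) and glue them, exactly as you did for case~(1), to recover the full interval $q'_\nu(p)<q<\min\{\cdots\}$. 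The paper's proof of assertion~(2) accordingly splits into two subcases (i) $\nu_2\leq(\tfrac n2-1)(1-\tfrac2p)$ and (ii) $\nu_2>(\tfrac n2-1)(1-\tfrac2p)$, treating (b) in the first and (a)(i)+(a)(ii) in the second. Once you add that subcase, your argument is complete and identical to the paper's.
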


\begin{proof}
We first notice that if $p\geq 2,$ the following equivalence holds
\begin{equation}
q'_\nu (p) <2 \quad {\rm if \hskip 2truemm and \hskip 2truemm only \hskip 2truemm if}\quad \nu_2 > \left(\frac n2 -1\right)\left(1-\frac 2p\right).
\end{equation}
Also,
\begin{equation}
\widetilde q_{\nu, p} \geq 2q_\nu >2 \quad {\rm for \hskip 2truemm all}\quad 1\leq p \leq \frac {2n}{n-2} \quad {\rm and} \quad \nu_2 >0.
\end{equation}
\vskip 2truemm
1) Suppose first that $2\leq p \leq \frac {2n}{n-2}$ and $\mu=0.$ Then by Theorem 6.7, estimate (40) is satisfied for $s=2.$ We distinguish two cases.\\
{\underline {Case}} 1. We suppose that $\nu_2 > \left(\frac n2 -1\right)\left(1-\frac 2p\right).$ Then by equation (41), we have $q'_\nu (p) <s=2.$ It follows from Theorem 6.6 that $P_\nu$ is bounded in $L^{p, q}_\nu (T_{\Lambda_n})$ if\\
$\bullet
\quad q'_\nu (p) <q\leq 2$ and $q<\widetilde q_{\nu, p}$\\
$\bullet
\quad 2<q <\min \{2q_\nu, \widetilde q_{\nu, p}\}=2q_\nu$\\
and the last equality above follows from equation (42).\\
{\underline {Case}} 2. We suppose that $0<\nu_2 \leq \left(\frac n2 -1\right)\left(1-\frac 2p\right).$ Then by (1), we have $q'_\nu (p) \geq s=2.$ It follows from Theorem 6.6 that $P_\nu$ is bounded in $L^{p, q}_\nu (T_{\Lambda_n})$ if
$q'_\nu (p) <q<\min \{2q_\nu, \widetilde q_{\nu, p}\}=2q_\nu$
and the last equality above again follows from equation (42). This proves the assertion (1) of the corollary.
\vskip 2truemm
2) Observe that $p=\frac {2n}{n-2} $ means that $\mu=0.$ We assume then that $p>\frac {2n}{n-2}.$ Notice that
$$p_\nu = 1+\frac {\nu_2 +\frac n2}{(\frac n2 -1-\nu_2)_+}=\left \{
\begin{array}{clcr}
1+\frac {\nu_2 +\frac n2}{\frac n2 -1-\nu_2}&\rm if &0<\nu_2 <\frac n2 -1\\
\infty & &\rm elsewhere
\end{array}
\right
.
.$$
This suggests two cases: $0<\nu_2 <\frac n2 -1$ and $\nu_2 \geq \frac n2 -1.$ Also, the case $p_\nu \leq \frac {2n}{n-2}$ is irrelevant since we must have $p<p_\nu:$ it would refer to assertion (1) of the corollary. This suggests to replace the first case by $\frac {n-2}{2n} <\nu_2 <\frac n2 -1.$ Furthermore, note that $p>\frac {2n}{n-2}$ is equivalent to $\mu=\frac n2 -1-\frac np >0.$ Also, $p>\frac {2n}{n-2}$ implies that $\frac n{2p'} -1 >\frac {n-2}{4}>0$ and so
$$\widetilde q_{\nu, p} = 2\frac {\nu_2 +\frac n2 -1}{n-2-\frac np}.$$
Moreover, we check that $\widetilde q_{\nu, p} >2$ in the following two cases:
\begin{itemize}
\item
$\frac{n-2}{2n}<\nu_2 <\frac n2 -1$ and $p_\nu >p>\frac {2n}{n-2};$
\item
$\nu_2 \geq \frac n2 -1$ and $p>\frac {2n}{n-2}.$
\end{itemize}
{\underline {Case}} 1. We suppose first that $\frac {n-2}{2n} <\nu_2 <\frac n2 -1, \quad p_\nu >p>\frac {2n}{n-2}$ and $\mu=\frac n2 -1-\frac np.$ 
By Theorem 6.7, estimate (40) is satisfied for $s=2.$ We check easily that $\frac {n-2}{2n} < (\frac n2 -1)(1-\frac 2p)$ whenever $p>\frac {2n}{n-2}.$ According to equation (1), we distinguish two subcases:
\begin{enumerate}
\item[(i)]
If $\frac {n-2}{2n} <\nu_2 \leq \left(\frac n2 -1\right)\left(1-\frac 2p\right),$ then $\quad q'_\nu (p) \geq s=2.$ It follows from Theorem 6.6 that $P_\nu$ is bounded in $L^{p, q}_\nu (T_{\Lambda_n})$ if
$$\quad q'_\nu (p) < q< \min \left \{2\min \left \{\frac {\nu_1 -\frac n2 +1}{\frac n2 -1-\frac np}, \frac {\nu_2 +\frac n2 -1}{n-2-\frac np}\right \}, \widetilde q_{\nu, p}\right \}=\min \left \{2\frac {\nu_1 - \frac n2 +1}{\frac n2 -1 -\frac np},  \widetilde q_{\nu, p}\right \}.$$
\item[(ii)]
If $\left(\frac n2 -1\right)\left(1-\frac 2p\right) <\nu_2 < \frac n2 -1,$ then $\quad q'_\nu (p) < s=2.$ It follows from Theorem 6.6 that $P_\nu$ is bounded in $L^{p, q}_\nu (T_{\Lambda_n})$ if\\
$\bullet
\quad q'_\nu (p) <q\leq 2$ and $q<\min \left \{2\min \left \{\frac {\nu_1 -\frac n2 +1}{\frac n2 -1-\frac np}, \frac {\nu_2 }{\frac n2-1-\frac np}\right \}, \widetilde q_{\nu, p}\right \}=\min \left \{2\frac {\nu_1 - \frac n2 +1}{\frac n2 -1 -\frac np},  \widetilde q_{\nu, p}\right \}$\\
$\bullet
\quad 2<q <\min \left \{2\min \left \{\frac {\nu_1 -\frac n2 +1}{\frac n2 -1-\frac np}, \frac {\nu_2 +\frac n2 -1}{n-2-\frac np}\right \}, \widetilde q_{\nu, p}\right \}=\min \left \{2\frac {\nu_1 - \frac n2 +1}{\frac n2 -1 -\frac np},  \widetilde q_{\nu, p}\right \}.$\\
\end{enumerate}
This proves the assertion (2) of the corollary.\\
{\underline {Case}} 2. We suppose that $\nu_2 \geq \frac n2 -1, \quad p_\nu =\infty >p>\frac {2n}{n-2}$ and $\mu=\frac n2 -1-\frac np.$
By Theorem 6.7, estimate (40) is satisfied for $s=2.$ In this case, $q'_\nu (p) <s=2$ since $\nu_2 \geq \frac n2 -1.$ It follows from Theorem 6.6 that $P_\nu$ is bounded in $L^{p, q}_\nu (T_{\Lambda_n})$ if\\
$\bullet
\quad q'_\nu (p) <q\leq 2$ and $q<\min \left \{2\min \left \{\frac {\nu_1 -\frac n2 +1}{\frac n2 -1-\frac np}, \frac {\nu_2 }{\frac n2-1-\frac np}\right \}, \widetilde q_{\nu, p}\right \}=\min \left \{2\frac {\nu_1 - \frac n2 +1}{\frac n2 -1 -\frac np},  \widetilde q_{\nu, p}\right \}$\\
$\bullet
\quad 2<q <\min \left \{2\min \left \{\frac {\nu_1 -\frac n2 +1}{\frac n2 -1-\frac np}, \frac {\nu_2 +\frac n2 -1}{n-2-\frac np}\right \}, \widetilde q_{\nu, p}\right \}=\min \left \{2\frac {\nu_1 - \frac n2 +1}{\frac n2 -1 -\frac np},  \widetilde q_{\nu, p}\right \}.$
This proves the assertion (3) of the corollary.
\end{proof}

Assertion (1) of the previous corollary is just assertion (2) of Theorem 2.3. Assertions (1) and (2) of Theorem 2.3 are particular cases of  \cite[Corollary 1.4]{DD1} for tube domains over Lorentz cones with $\mu=\nu.$ For assertions (3) and (4),   for $p_\nu >p>\frac {2n}{n-2},$ we obtain by interpolation the following result which is sharper than assertions (2) and (3) of the previous corollary.

\begin{cor}
Let $n\geq 3$ and $\nu=(\nu_1, \nu_2)\in \mathbb R^2$ such that $\nu_1 >\frac n2 -1, \hskip 2truemm \nu_2 >0.$ The weighted Bergman projector $P_\nu$ is bounded in $L^{p, q}_\nu (T_{\Lambda_n})$ for the following values of $p, q$ and $\nu.$
\begin{enumerate}
\item
$\frac {n-1}{\frac n2 -1 - \nu_2} > p > \frac {2n}{n-2}$ and $\quad 2<q< \widetilde q_{\nu, p}$ provided 
$\frac {n-2}{2n}<\nu_2 <\frac n2 -1;$
\item
$ p > \frac {2n}{n-2}$ and $\quad 2<q< \widetilde q_{\nu, p}$ provided $\nu_2 \geq \frac n2 -1.$
\end{enumerate}
\end{cor}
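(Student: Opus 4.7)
The plan is to obtain Corollary 6.9 from Corollary 6.8 by a complex-interpolation argument. The main improvement to establish is the replacement of the $\nu_1$-dependent upper bound $q<2(\nu_1-n/2+1)/(n/2-1-n/p)$ appearing in assertions (2)--(3) of Corollary 6.8 by the $\nu_1$-free bound $q<\widetilde q_{\nu,p}$; the lower bound $q>2$ in Corollary 6.9 relaxes the $q>q'_\nu(p)$ of Corollary 6.8(2) in those $(p,\nu_2)$-regimes (close to the critical corner $p\downarrow\frac{2n}{n-2}$ and $\nu_2\downarrow\frac{n-2}{2n}$) where $q'_\nu(p)>2$.

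The key observation enabling the removal of the $\nu_1$-constraint is its monotonicity in $\nu_1$: the ratio $2(\nu_1-n/2+1)/(n/2-1-n/p)$ is strictly increasing in $\nu_1$ and exceeds $\widetilde q_{\nu,p}$ once $\nu_1\geq n/2-1+\frac12(n/2-1-n/p)\widetilde q_{\nu,p}$. Hence, for any sufficiently large shift $R>0$, Corollary 6.8 applied with the weight $\nu+R\,e_1$ (with $e_1=(1,0)$) already yields boundedness of $P_{\nu+R\,e_1}$ on $L^{p,q_1}_{\nu+R\,e_1}$ for every $q_1$ arbitrarily close to $\widetilde q_{\nu,p}$.

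To transfer this bound back to the target $\nu$, I would apply Stein's complex interpolation theorem to an analytic family of operators of the form
\[
T_z f(w) \;=\; Q_1(\Im w)^{-R(z-\theta_0)/q}\,P_{\nu+R(z-\theta_0)\,e_1}\!\left(Q_1(\Im\cdot)^{R(z-\theta_0)/q} f\right)(w),
\]
defined on a complex strip $0\leq\Re z\leq 1$, with $\theta_0\in(0,1)$ chosen so that $T_{\theta_0}=P_\nu$. The identity $Q^{\mu+\beta\,e_1-\tau}=Q_1^{\beta}\,Q^{\mu-\tau}$ shows that the multiplication operators absorb the weight shift, so that boundedness of $T_z$ on a fixed mixed-norm space $L^{p,q}_\nu$ is equivalent to boundedness of the shifted Bergman projector on the correspondingly shifted mixed-norm space. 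On the right boundary $\Re z=1$, the bound comes from the preceding step with the large shifted weight, giving access to any $q_1$ close to $\widetilde q_{\nu,p}$; on the left boundary $\Re z=0$, the bound uses Corollary 6.8 applied at the mildly shifted weight $\nu-R\theta_0\,e_1$, or, to reach $q$-exponents as close to $2$ as needed, the trivial $L^2_\nu$ self-adjointness bound of $P_\nu$ supplemented by a Schur-type estimate. Analyticity of the family follows from the analytic dependence of the Bergman kernel $B_\mu$ on $\mu$, since it is a product of zero-free power functions $Q_j((z-\bar w)/(2i))^{-\nu_j-\tau_j}$. Stein's theorem then delivers the bound on $T_{\theta_0}=P_\nu$ at any interpolated exponent $q_\star\in(2,\widetilde q_{\nu,p})$.

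The principal obstacle will be the careful joint tuning of the parameters $R$, $\theta_0$, and the boundary exponents $(q_0,q_1)$ so that the interpolated exponent $q_\star$ sweeps out the entire open interval $(2,\widetilde q_{\nu,p})$ rather than a proper subinterval, and so that on the left boundary a left-endpoint $q_0$ arbitrarily close to $2$ is genuinely achievable (this forces one to combine Corollary 6.8 at slightly shifted weights with the $L^{2,2}_\nu$ estimate). Admissibility, i.e., polynomial growth of the operator norm of $T_z$ in $|\Im z|$ on vertical lines, has to be verified directly from Schur estimates on $B_\mu$ that are uniform in the complex parameter. Additional uniformity is needed as $p\to p_\nu^-$ in case (i) or $p\to\infty$ in case (ii), where the required shift $R$ blows up.
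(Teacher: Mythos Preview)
Your approach is genuinely different from the paper's, and as written it has a gap.

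\textbf{What the paper does.} The paper keeps $\nu$ fixed throughout and performs ordinary complex interpolation of the \emph{single} operator $P_\nu$ in the $(1/p,1/q)$ plane. One endpoint is the Schur region from assertions (1)--(2) of Theorem~2.3 (valid for all $p$ and $q'_\nu<q<q_\nu$); the other is assertion (1) of Corollary~6.8, which at $p=\tfrac{2n}{n-2}$ gives $q$ up to $2q_\nu$. The convex hull of these two regions in the $(1/p,1/q)$ plane contains the segment $2<q<\widetilde q_{\nu,p}$ for each $p>\tfrac{2n}{n-2}$ with $p<p_\nu$; one checks that the bounding line of that hull is exactly $q=\widetilde q_{\nu,p}$. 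No weight shifting and no analytic families are needed, so the $\nu_1$-dependent factor $2(\nu_1-\tfrac n2+1)/(\tfrac n2-1-\tfrac np)$ in Corollary~6.8(2)--(3) never enters the argument.

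\textbf{The gap in your approach.} Your analytic family
\[
T_z f \;=\; Q_1(\Im\cdot)^{-R(z-\theta_0)/q}\,P_{\nu+R(z-\theta_0)e_1}\bigl(Q_1(\Im\cdot)^{R(z-\theta_0)/q} f\bigr)
\]
is meant to convert boundedness of $T_z$ on $L^{p,q}_\nu$ into boundedness of $P_{\nu+R(z-\theta_0)e_1}$ on $L^{p,q}_{\nu+R(z-\theta_0)e_1}$. But that conversion requires the multiplication exponent to be exactly $R(z-\theta_0)/q$, and in Stein interpolation the exponent $q$ \emph{varies across the strip}: at $\Re z=0$ you need $q=q_0$, at $\Re z=1$ you need $q=q_1$. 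No single choice of the exponent in $T_z$ makes both endpoint identifications hold simultaneously, so you cannot invoke Corollary~6.8 at both boundaries as planned. (Put differently: if you set the power to $R(z-\theta_0)/q_\star$ with $q_\star$ the target exponent, then at the right boundary the weight on the function space is shifted by $R(1-\theta_0)q_1/q_\star$, not by $R(1-\theta_0)$, so Corollary~6.8 for $P_{\nu+R(1-\theta_0)e_1}$ on $L^{p,q_1}_{\nu+R(1-\theta_0)e_1}$ does not apply.) This is not a mere bookkeeping issue: it is the structural reason why ``interpolating in the weight to beat a weight-dependent bound on $q$'' does not close up. The paper's route, interpolating in $(p,q)$ at fixed $\nu$ between the Schur region and the endpoint $p=2n/(n-2)$ of Corollary~6.8(1), sidesteps the problem entirely and is both shorter and elementary.
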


\begin{proof}
\hskip 2truemm The situation is represented in Figure \ref{fig6} (Figure 1.1 of [2]) . From assertion (1) of Theorem 2.3, 
we obtain that $P_\nu$ is bounded in $L^{p, q}_\nu (T_{\Lambda_n})$ if
$$0\leq \frac 1p \leq 1 \quad {\rm and} \quad \frac 1{q_\nu}<\frac 1q<\frac 1{q'_\nu}.$$
Combining with assertion (1) of Corollary 6.8, we deduce by interpolation that $P_\nu$ is bounded in $L^{p, q}_\nu 
(T_{\Lambda_n})$ for
$$\frac 1{p_\nu} < \frac 1p < \frac {n-2}{2n} \quad {\rm and} \quad \frac {n-2}{2n}<\nu_2 <\frac n2 -1$$
$$({\rm resp.} \quad \frac 1p < \frac {n-2}{2n} \quad {\rm and} \quad \nu_2 \geq \frac n2 -1),$$
if the couple $(\frac 1p, \frac 1q)$ lies in the triangle given by the inequalities
\begin{equation}\label{ineq}
y>\frac {n-2}n (-q_\nu x +1), \quad \frac 1{2q_\nu} <x<\frac 1{q_\nu}.
\end{equation}
Remind that for such values of $p,$ we have $\frac n{2p'} -1>0$ and so $\widetilde q_{\nu, p} = 
2\frac {\nu_2 +\frac n2 -1}{n-2-\frac np}.$ It is now easy to conclude that the first inequality in (\ref{ineq}) 
can be written in the form
 $$q < \widetilde q_{\nu, p}.$$
\end{proof}

\begin{remark}
{\rm According to Theorem 2.3, the conjecture stated in the introduction of [2] for $\nu_1=\nu_2$ 
is valid for tube domains over Lorentz cones. More precisely, the weighted Bergman projector $P_\nu$ is
bounded in $L^{p, q}_\nu$  when the couple $(\frac 1p, \frac 1q)$ lies in the blank region of Figure 1.1 of [2] 
depicted below. This result has been proved in the blue region in [4] and in the red region in [2] for $\nu_1=\nu_2$. 
The result in the blank region is given by Theorem 2.3.  In particular, the case $\nu_1=\nu_2=\frac n2$  and $p=q$ in 
Theorem 2.3 corresponds to} \cite[Theorem 1.2]{BoNa}.
\end{remark}

\vskip 60truemm

\begin{figure}[tbph]
\centering
\includegraphics[width=0.7\linewidth]{./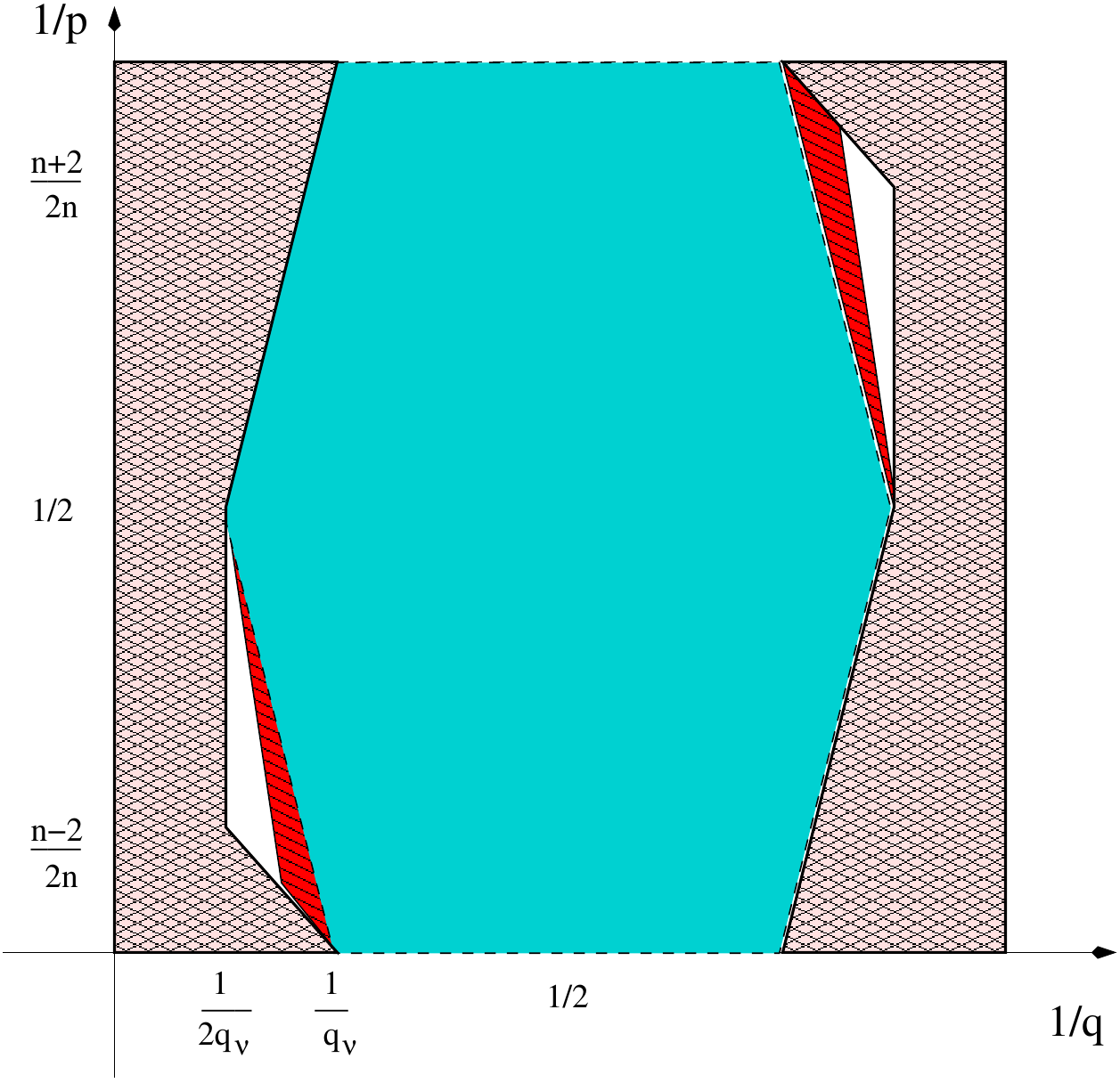}
\caption{Region of boundedness of $P_\nu$ for Lorentz cones and $\nu_1=\nu_2$}
\label{fig6}
\end{figure}

Finally, the proof of Theorem 2.4 is just a combination of the Theorem 2.3 for $n=3$ and Theorem 2.1 for the Pyateckii-Shapiro domain.
\vskip 2truemm
\noindent
\textbf{Acknowledgements.} The authors wish to express their gratitude to Aline Bonami and Gustavo Garrig\'os for valuable discussions.


\end{document}